\newtheorem{thm}{Theorem}[section]
\newtheorem{prp}[thm]{Proposition}
\newtheorem{lem}[thm]{Lemma}
\newtheorem{cor}[thm]{Corollary}
\theoremstyle{definition}
\newtheorem{dfn}[thm]{Definition}
\newtheorem{ntn}[thm]{Notation}
\theoremstyle{remark}
\newtheorem{rmk}[thm]{Remark}
\numberwithin{equation}{section}
\newcommand{\Bb}{\mathcal{B}}
\newcommand{\Mm}{\mathcal{M}}
\newcommand{\Pp}{\mathcal{P}}
\newcommand{\Oo}{\mathcal{O}}
\newcommand{\Tt}{\mathcal{T}}
\newcommand{\CC}{\mathbb{C}}
\newcommand{\NN}{\mathbb{N}}
\newcommand{\RR}{\mathbb{R}}
\newcommand{\TT}{\mathbb{T}}
\newcommand{\ZZ}{\mathbb{Z}}
\newcommand{\Aut}{\operatorname{Aut}}
\renewcommand{\emptyset}{\varnothing}
\newcommand{\lsp}{\operatorname{span}}
\newcommand{\clsp}{\operatorname{\overline{\lsp}}}
\title[KMS states on generalised Bunce--Deddens algebras]{KMS states on generalised Bunce--Deddens algebras and their Toeplitz extensions}
\author{David Robertson}
\email[D. Robertson]{dave84robertson@gmail.com}
\author{James Rout}
\email[J. Rout]{jdr749@uowmail.edu.au}
\author{Aidan Sims}
\email[A. Sims]{asims@uow.edu.au}
\address{School of Mathematics and Applied Statistics\\
University of Wollongong\\
Wollongong NSW 2522\\ AUSTRALIA}
\date{\today}
\subjclass[2010]{46L05 (primary); 46L30 (secondary)}
\keywords{$C^*$-algebra; graph algebra; KMS state; Bunce-Deddens algebra}
\thanks{This research was supported by the Australian Research Council.}
\begin{document}

\begin{abstract}
We study the generalised Bunce--Deddens algebras and their Toeplitz extensions
constructed by Kribs and Solel from a directed graph and a sequence $\omega$ of positive
integers. We describe both of these $C^*$-algebras in terms of novel universal properties, and
prove uniqueness theorems for them; if $\omega$ determines an infinite supernatural
number, then no aperiodicity hypothesis is needed in our uniqueness theorem for the
generalised Bunce--Deddens algebra. We calculate the KMS states for the gauge action in
the Toeplitz algebra when the underlying graph is finite. We deduce that the generalised
Bunce--Deddens algebra is simple if and only if it supports exactly one KMS state, and
this is equivalent to the terms in the sequence $\omega$ all being coprime with the
period of the underlying graph.
\end{abstract}

\maketitle

\section{Introduction}

Every Cuntz--Krieger algebra $\Oo_A$ carries a gauge action of $\TT$ which lifts to an
action $\alpha$ of $\RR$. Enomoto, Fujii and Watatani \cite{EnomotoFujiiEtAl:MJ84} proved
that when $A$ is irreducible, $(\Oo_A, \alpha)$ has a unique KMS state, which occurs at
inverse temperature equal to the logarithm $\ln\rho(A)$ of the spectral radius of $A$.
Exel and Laca \cite{ExelLaca:CMP03} extended this result to Cuntz--Krieger algebras of
infinite matrices and also described the KMS states of their Toeplitz extensions. More
recently, an Huef, Laca, Raeburn and Sims extended these results to the graph algebras of
finite graphs \cite{anHuefLacaEtAl:JMAA2013} and $C^*$-algebras associated to higher-rank
graphs \cite{anHuefLacaEtAl:JFA14}. In each case, the Toeplitz extension has many more
KMS states than the Cuntz--Krieger algebra, and encodes more information about the
underlying object.

In \cite{KribsSolel:JAMS07}, Kribs and Solel studied $C^*$-algebras generated by periodic
weighted-shift operators on the path spaces of directed graphs. They showed that the
$C^*$-algebra generated by all such operators can be realised as a direct limit of graph
algebras. Specifically, given $n > 0$, they construct a graph $E(n)$ with vertex set
$E^{<n}$, the space of paths in $E$ of length at most $n-1$, and they exhibited
inclusions $\Tt C^*(E(n)) \hookrightarrow \Tt C^*(E(mn))$. Upon restriction to the
canonical abelian subalgebra in $\Tt C^*(E(mn))$, these inclusions are compatible with a
natural surjection $E^{<mn} \to E^{<n}$, so $\varinjlim \Tt C^*(E(n))$ has an abelian
subalgebra isomorphic to $C_0(\varprojlim E^{<n})$. This construction has recently been
used to calculate the nuclear dimension of graph algebras and Kirchberg algebras
\cite{RuizSimsEtAl:xx14, RuizSimsEtAl:AMxx}.

Here we study the structure of the Kribs--Solel algebras and their Toeplitz extensions,
and calculate the KMS states of the associated dynamics. We start in
Section~\ref{sec:universal} by giving a universal description of the Kribs--Solel algebra
$C^*(E, \omega)$ of a directed graph $E$ corresponding to a sequence $\omega =
(n_k)^\infty_{k=1}$ of positive integers as generated by a copy of $C^*(E)$ and a copy of
$C_0(\varprojlim E^{< n_k})$. We give an analogous description of the Toeplitz extension
$\Tt(E, \omega)$. Our approach clarifies the structure of these algebras, and in
particular makes transparent the fact that $C^*(E, \omega)$ and $\Tt(E,\omega)$ depend
only on $E$ and the supernatural number determined by $\omega$ (see
Proposition~\ref{prp:equiv omegas}).

Kribs and Solel use a topological graph $E(\infty)$ in the sense of Katsura to study some
properties of their direct-limit algebras. They show that $C^*(E,
\omega)$ is isomorphic to the topological-graph $C^*$-algebra $C^*(E(\infty))$, allowing
them to plug into Katsura's powerful structure theory. In Section~\ref{sec:topgraph} we
provide a slightly different description of $E(\infty)$ that we feel clarifies the
construction somewhat, and study its structure in greater depth than appears in
\cite{KribsSolel:JAMS07}. In particular, when $E$ is finite and strongly connected, we
show how to decompose $E(\infty)$ into irreducible components using Perron-Frobenius
theory for the matrices $A_E^{n_k}$.

In Section~\ref{sec:uniqueness} we prove uniqueness theorems for $C^*(E, \omega)$ and
$\Tt(E, \omega)$. The uniqueness theorem for $\Tt(E, \omega)$
(Proposition~\ref{prp:coburn}) is analogous to that for the Toeplitz extension of a graph
algebra, and we prove it using that technology. Interestingly, our Cuntz--Krieger
uniqueness theorem (Theorem~\ref{thm:CKUT}) for $C^*(E,\omega)$ requires no aperiodicity
hypothesis, emphasising Kribs and Solel's view of these algebras as generalised
Bunce--Deddens algebras. We obtain this result by combining Katsura's uniqueness theorem
for topological graph $C^*$-algebras with Kribs and Solel's observation that their
topological graph $E(\infty)$ has no loops. This also leads to a very satisfactory
characterisation of ideal-structure for $C^*(E, \omega)$ for finite, strongly connected $E$:
$C^*(E, \omega)$ decomposes as a direct sum of simple subalgebras indexed by the finite
group of integers modulo the greatest common divisor of the supernatural number $\omega$
and the period of the graph $E$ in the sense of Perron--Frobenius theory. In particular,
$C^*(E,\omega)$ is simple if and only if $\omega$ is coprime to the period of $E$.

In Section~\ref{sec:KMS}, we focus on finite strongly connected graphs $E$, and study the
KMS states for the gauge-dynamics on $\Tt(E, \omega)$, paying attention to those which
factor through $C^*(E,\omega)$. Our analysis follows the broad lines of
\cite{ExelLaca:CMP03, LacaRaeburn:AM10}, but the inverse-limit structure of the spectrum
of the diagonal in $\Tt(E, \omega)$ introduces some interesting wrinkles. We reinterpret
the KMS condition for states on $\Tt(E,\omega)$ as a subinvariance condition for an
operator on the space of signed Borel measures on $\varprojlim E^{< n_k}$
(Theorem~\ref{thm:KMSchar}). To construct KMS states on the Toeplitz algebra of a graph
$E$, one makes use of the path-space representation on $\ell^2(E^*)$. It is not \emph{a
priori} clear how to construct a corresponding representation of $\Tt(E, \omega)$ from
the Kribs--Solel approach, but our universal description of $\Tt(E, \omega)$ suggests a
solution. We use this representation to construct KMS$_\beta$ states for all $\beta >
\ln\rho(A_E)$ (Proposition~\ref{prp:constructKMS}), and show that there is an affine
isomorphism between the KMS$_\beta$ simplex of $\Tt(E,\omega)$ and the simplex of
probability measures on $\varprojlim E^{<n_k}$ (Corollary~\ref{cor:normalisedaffine}).

Finally, we investigate which KMS states factor through $C^*(E,\omega)$. In contrast with
\cite{EnomotoFujiiEtAl:MJ84, anHuefLacaEtAl:JMAA2013}, strong connectedness of $E$ is not
sufficient to ensure that $C^*(E, \omega)$ admits a unique KMS state; rather, the
extremal KMS states of $C^*(E, \omega)$ correspond precisely to the direct summands
described in Section~\ref{sec:uniqueness}. Following the approach of
\cite{anHuefLacaEtAl:JFA15}, we describe a formula which always determines a
KMS$_{\ln\rho(A_E)}$ state $\phi$ of $C^*(E,\omega)$. Restricting this state to each
direct summand of $C^*(E, \omega)$ and normalising yields a family of KMS states all at
the same inverse temperature, and we use the results of \cite{EnomotoFujiiEtAl:MJ84,
anHuefLacaEtAl:ETDS14} to show that there cannot be any KMS states for $C^*(E, \omega)$
at any other temperatures. We prove that these states are precisely the extremal KMS
states of $C^*(E, \omega)$. We deduce that $\phi$ is the only KMS state of
$C^*(E,\omega)$ if and only if $\omega$ is coprime with the period of $E$, and hence if
and only if $C^*(E)$ is simple; we further show that this is equivalent to $\phi$ being a
factor state.

\section{Background}\label{sec:background}

\subsection{Directed graphs and their \texorpdfstring{$C^*$}{C*}-algebras}
We use the convention for graph $C^*$-algebras appearing in Raeburn's book
\cite{Raeburn:Graphalgebras05}. So if $E = (E^0, E^1, r, s)$ is a directed graph, then a
path in $E$ is a word $\mu = e_1\dots e_n$ in $E^1$ such that $s(e_i) = r(e_{i+1})$ for
all $i$, and we write $r(\mu) = r(e_1)$, $s(\mu) = s(e_n)$, and $|\mu| = n$. As usual, we
denote by $E^*$ the collection of paths of finite length, and $E^n := \{\mu \in E^* : |\mu| = n\}$;
we also write $E^{<n} := \{\mu \in E^* : |\mu| < n\}$. We borrow the convention from
the higher-rank graph literature in which we write, for example $vE^*$ for
$\{\mu \in E^* : r(\mu) = v\}$, and $v E^1 w$ for $\{e \in E^1 : r(e) = v\text{ and } s(e)= w\}$.
The adjacency matrix of $E$ is then the $E^0 \times E^0$ integer matrix with $A_E(v,w) = |vE^1w|$.

We say that $E$ is \emph{row-finite} if $vE^1$ is finite for all $v \in E^0$, and that
$E$ has no sources if each $vE^1$ is nonempty.

If $E$ is row-finite and has no sources, then a Toeplitz--Cuntz--Krieger $E$-family in a
$C^*$-algebra $A$ is a pair $(s,p)$, where $s = \{s_e : e \in E^1\} \subseteq A$ is a
collection of partial isometries and $\{p_v : v \in E^0\} \subseteq A$ is a set of
mutually orthogonal projections such that $s^*_e s_e = p_{s(e)}$ for all $e \in E^1$, and
$p_v \ge \sum_{e \in vE^1} s_e s^*_e$ for all $v \in E^0$. If equality holds in the
second relation (for all $v$), then $(s,p)$ is a Cuntz--Krieger $E$-family.

The Toeplitz algebra $\Tt C^*(E)$ is the universal $C^*$-algebra generated by a
Toeplitz--Cuntz--Krieger family (\cite{FowlerRaeburn:IUMJ99}) and the graph algebra
$C^*(E)$ is the universal $C^*$-algebra generated by a Cuntz--Krieger $E$-family
\cite[Proposition~1.21]{Raeburn:Graphalgebras05}.

Kribs and Solel describe their generalised Bunce--Deddens algebras as direct limits of
graph $C^*$-algebras obtained from the following construction
\cite[Theorem~4.2]{KribsSolel:JAMS07}.

Let $E = (E^0, E^1, r, s)$ be a row-finite directed graph with no sources, and fix $n \in
\NN\setminus\{0\}$. Define
\begin{gather*}
E(n)^0 := E^{<n},\qquad E(n)^1 := \{(e,\mu) : e \in E^1, \mu \in s(e)E^{<n}\}\\
s_n(e,\mu) := \mu\qquad\text{ and }\qquad
r_n(e,\mu) = \begin{cases}
        e\mu &\text{ if $|\mu| < n-1$} \\
        r(e) &\text{ if $|\mu| = n-1$.}
    \end{cases}
\end{gather*}
Then $E(n) = (E(n)^0, E(n)^1, r_n, s_n)$ is a row-finite directed graph with no sources.
For $\mu \in E^*$, we write
$[\mu]_n$ for the unique element of $E^{<n}$ such that $\mu = [\mu]_n\mu'$ for some
$\mu'$ with $|\mu'| \in n\NN$; we think of $[\mu]_n$ as the residue of $\mu$ modulo $n$.

It is easy to check that there is a bijection from $\{(\mu,\nu) : \mu \in E^*, \nu \in
s(\mu)E^{<n}\}$ to $E(n)^*$ that carries $(\mu,\nu)$ to $(\mu_1, [\mu_2\dots\mu_{|\mu|}\nu]_n)(\mu_2,
[\mu_3\dots \mu_{|\mu|}\nu]_n) \dots (\mu_{|\mu|}, \nu)$. We frequently use this
bijection to identify $E(n)^*$ with $\{(\mu,\nu) : \mu \in E^*, \nu \in s(\mu)E^{<n}\}$,
and we then have $s_n(\mu,\nu) = \nu$, and $r_n(\mu,\nu) = [\mu\nu]_n$. This implies, in
particular, that the lengths of the paths $r_n(\mu,\nu)$ and $s_n(\mu,\nu)$ in $E^{<n}$
differ by $|\mu|$ modulo $n$. Thus, for $v,w \in E^0 \subseteq E^{<n}$, we have
\begin{equation}\label{eq:connectivity in E(n)}
v E(n)^* w \not= \emptyset \quad\text{ if and only if }\quad
    v E^{jn} w \not= \emptyset\text{ for some $j \in \NN$.}
\end{equation}

\subsection{The KMS condition}
We use the definition of KMS states given in
\cite[Definition~5.3.1]{BratteliRobinson:OAQSMvII}. Let $(A, \RR, \alpha)$ be a
$C^*$-dynamical system. An element $a \in A$ is analytic for $\alpha$ if $t \mapsto
\alpha_t(a)$ extends to an entire function $z \mapsto \alpha_z(a)$ on $\CC$.
Let $A_\alpha$ denote the collection
of analytic elements of $A$. A state $\phi$ of $A$ is said to be a KMS state at inverse
temperature $\beta \in \RR \setminus\{0\}$ if
\[
\phi(ab) = \phi(b\alpha_{i\beta}(a))\quad\text{ for all $a,b \in A_\alpha$.}
\]
It suffices to verify this KMS condition on any $\alpha$-invariant set of analytic elements
spanning a dense subspace of $A$. Proposition~5.3.3 of \cite{BratteliRobinson:OAQSMvII}
says that if $\phi$ is KMS$_\beta$ for $\alpha$, then $\phi$ is $\alpha$-invariant. If
$\beta = 0$, then the KMS condition above reduces to requiring that $\phi$ is a trace,
and we then impose $\alpha$-invariance as an additional requirement.

\subsection{The Perron--Frobenius theorem}
Let $X$ be a finite set. A matrix $A \in M_X(\CC)$ is \emph{irreducible} if, for all $x,y
\in X$, there exists $n \in \NN$ such that $A^n(x,y) \not= 0$. We say that a matrix is
nonnegative if all of its entries are nonnegative.

Let $A$ be an irreducible nonnegative matrix. The Perron--Frobenius theorem (see, for
example, \cite[Theorem~1.5]{Seneta:NNMMC}) says that the spectral radius $\rho(A)$ is an
eigenvalue of $A$ with a positive eigenvector, and that $\rho(A)$ is a simple root of the
characteristic polynomial of $A$. We call the unique positive eigenvector with eigenvalue
$\rho(A)$ and unit $1$-norm the \emph{unimodular Perron--Frobenius eigenvector} of $A$.

\subsection{The space of finite signed Borel measures}
If $M$ is a $\sigma$-algebra of subsets of a set $X$, then a real-valued function $m$
defined on $M$ is said to be a finite signed measure if $m(\emptyset) = 0$ and $m$ is
completely additive.

Suppose that $X$ is a compact Hausdorff space. We denote by $\Mm(X)$ the space of all
finite signed Borel measures on $X$, by $\Mm^+(X)$ the subset of $\Mm(X)$ consisting of
positive Borel measures, and by $\Mm^+_1(X)$ the subset of $\Mm^+(X)$ consisting of
probability measures on $X$.

Let $m \in \Mm(X)$. By the Hahn decomposition theorem
\cite[Theorem~8.2]{Bartle:ElementsIntegration} there are sets $P,N \subseteq X$ such that
$X = P \cup N$ and $P \cap N = \emptyset$, and such that $m(E \cap P) \ge 0$ and $m(E
\cap N) < 0$ for all Borel $E \subseteq X$.

Let $m^+$ and $m^-$ be given by $m^+(E) = m(E \cap P)$ and $m^-(E) = -m(E \cap N)$ for
Borel $E$. Then $m^+, m^- \in \Mm^+(X)$. The Jordan decomposition theorem
\cite[Theorem~8.5]{Bartle:ElementsIntegration} says that $m = m^+ - m^-$ and that if $m',
m'' \in \Mm^+(X)$ satisfy $m = m' - m''$, then $m'(E) \ge m^+(E)$ and $m''(E) \ge m^-(E)$
for all Borel $E \subseteq X$.

The space $\Mm(X)$ of finite signed measures is a real Banach space under the norm
$\|m\| = m^+(X) + m^-(X)$.

\section{The Kribs--Solel algebras and their Toeplitz extensions}\label{sec:universal}

In this section, we describe an alternative presentation of Kribs and Solel's
$C^*$-algebras $\Tt C^*(E(n))$ and $C^*(E(n))$, and of their direct-limit algebras $A(n)$
and $B(n)$. We show that $\Tt C^*(E(n))$ is the universal $C^*$-algebra generated by a
Toeplitz--Cuntz--Krieger $E$-family and mutually orthogonal projections indexed by $E^{<n}$.
This presentation has the advantage that the connecting maps $\Tt C^*(E(n))
\hookrightarrow \Tt C^*(E(nm))$ have a particularly simple form: they preserve the
generating Toeplitz--Cuntz-Krieger family, and resolve the projection associated to each
$\mu \in E^{<n}$ into a sum of projections associated to paths of the form $\mu\tau \in
E^{<nm}$. This leads to a very natural presentation of $A(n)$ in terms of a
Toeplitz--Cuntz--Krieger $E$-family and a representation of the algebra of continuous
functions on a natural projective limit of the $E^{<n}$. We show that all of this
descends naturally to the $C^*(E(n))$ and $B(n)$.

\begin{dfn}\label{dfn:Toeplitz nrep}
Let $E$ be a row-finite directed graph with no sources, and fix $n \in
\NN\setminus\{0\}$. A \emph{Toeplitz $n$-representation} of $E$ in a $C^*$-algebra $A$ is a
triple $(T, Q, \Theta)$ where $(T,Q)$ is a Toeplitz--Cuntz--Krieger $E$-family in $A$,
and $\Theta = \{\Theta_\mu : \mu \in E^{<n}\}$ is a collection of mutually orthogonal
projections such that $Q_v = \sum_{\mu \in vE^{<n}} \Theta_\mu$ for all $v \in E^0$, and
\begin{equation}\label{eq:nrep rel}
T^*_e \Theta_\mu =
    \begin{cases}
        \Theta_{\mu'} T^*_e &\text{ if $\mu = e\mu'$}\\
        \sum_{e\nu \in E^n} \Theta_\nu T^*_e &\text{ if $\mu = r(e)$} \\
        0 &\text{ otherwise.}
    \end{cases}
\end{equation}
If $(T,Q)$ is a Cuntz--Krieger $E$-family, we call $(T, Q, \Theta)$ a
\emph{Cuntz--Krieger $n$-representation} of $E$.
\end{dfn}

We show that Kribs and Solel's $\Tt C^*(E(n))$ is universal for Toeplitz
$n$-representations of $E$ and that $C^*(E(n))$ is universal for Cuntz--Krieger
$n$-representations. We first describe a convenient family of spanning elements. We will
need the following notation: given a directed graph $E$, $n > 0$ and $\mu \in E^*$, we
write $\tau_n(\mu)$ for the unique element of $E^{<n}$ such that $\mu = \mu' \tau_n(\mu)$
with $|\mu'| \in n\NN$; so $|\tau_n(\mu)| \equiv |\mu| \;(\operatorname{mod}\,n)$, and $\mu
= \mu'\tau_n(\mu)$.

\begin{lem}\label{lem:"covariance"}
Let $E$ be a row-finite directed graph with no sources, and take $n > 0$. Let $(T, Q,
\Theta)$ be a Toeplitz $n$-representation of $E$, and fix $\mu \in E^*$ and $\alpha \in
E^{<n}$.
\begin{enumerate}
\item\label{it:commute mu 1} If $|\mu| \in n\NN$, then $T^*_\mu \Theta_{r(\mu)} =
    \Theta_{s(\mu)} T^*_\mu$.
\item\label{it:commute mu 2}
\[
    T^*_\mu \Theta_\alpha = \begin{cases}
        \Theta_{\alpha'}T^*_\mu &\text{ if $\alpha = \mu\alpha'$}\\
        \Theta_{s(\mu)} T^*_\mu &\text{ if $\mu = \alpha\mu'$ and $|\mu'| \in n\NN$}\\
        \sum_{|\tau_n(\mu')\lambda| = n} \Theta_\lambda T^*_\mu &\text{ if $\mu = \alpha\mu'$ and $|\mu'| \not\in n\NN$}\\
        0 &\text{ otherwise.}
    \end{cases}
\]
\end{enumerate}
\end{lem}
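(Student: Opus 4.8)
The plan is to establish the second assertion~\eqref{it:commute mu 2} by induction on $|\mu|$, and to read off the first assertion~\eqref{it:commute mu 1} from it as the special case $\alpha = r(\mu)$. For the base case $|\mu| = 0$ we have $\mu \in E^0$ and $T^*_\mu = Q_\mu = \sum_{\beta \in \mu E^{<n}} \Theta_\beta$, so mutual orthogonality of the $\Theta_\beta$ forces $T^*_\mu\Theta_\alpha = \Theta_\alpha = \Theta_\alpha T^*_\mu$ when $r(\alpha) = \mu$ and $T^*_\mu\Theta_\alpha = 0$ otherwise; these are exactly the first and last cases of~\eqref{it:commute mu 2}.

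For the inductive step, given $\mu$ of positive length I would factor off the \emph{last} edge, writing $\mu = \mu_0 f$ with $f \in E^1$, so that $T^*_\mu = T^*_f T^*_{\mu_0}$ and $|\mu_0| = |\mu| - 1$. Applying the inductive hypothesis---the statement of~\eqref{it:commute mu 2} with $\mu_0$ in place of $\mu$---to $T^*_{\mu_0}\Theta_\alpha$ yields one of its four right-hand sides: a single term $\Theta_\beta T^*_{\mu_0}$, a finite sum $\sum_{|\tau_n(\mu')\lambda| = n}\Theta_\lambda T^*_{\mu_0}$, or $0$. One then left-multiplies by $T^*_f$ and applies the defining relation~\eqref{eq:nrep rel} to each $T^*_f\Theta_\beta$ (respectively to each $T^*_f\Theta_\lambda$): such a factor equals $\Theta_{\beta'}T^*_f$ if $\beta = f\beta'$, equals $\sum_{f\nu \in E^n}\Theta_\nu T^*_f$ if $\beta = r(f)$, and vanishes otherwise. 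Recombining via $T^*_f T^*_{\mu_0} = T^*_\mu$ then reassembles the four cases of~\eqref{it:commute mu 2} for $\mu$. I would peel off the last edge rather than the first because factoring $\mu = e\mu''$ instead forces one to push $T^*_e\Theta_\alpha$ through first, whereupon the sum in the middle branch of~\eqref{eq:nrep rel} interacts awkwardly with the four inductive cases for $T^*_{\mu''}$.

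The one genuinely delicate point---and the part I expect to occupy most of the write-up---is the arithmetic modulo $n$ in the last two branches. When $\mu_0 = \alpha\mu'$, appending $f$ replaces $\mu'$ by $\mu' f$, and $|\mu' f| \in n\NN$ precisely when $|\mu'| \equiv -1 \pmod{n}$, i.e.\ when $|\tau_n(\mu')| = n - 1$. At that boundary the conclusion for $\mu$ should be the \emph{second} case, with value $\Theta_{s(\mu)}T^*_\mu$, rather than the third: the sum $\sum_{|\tau_n(\mu')\lambda| = n}\Theta_\lambda$ then runs over $\lambda$ of length one, only $\lambda = f$ survives the left-multiplication by $T^*_f$, and the sum collapses to $\Theta_{s(f)}T^*_\mu = \Theta_{s(\mu)}T^*_\mu$. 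Away from this boundary one checks instead that $\tau_n(\mu' f) = \tau_n(\mu') f$ and that the surviving terms $\lambda = f\lambda'$ reindex exactly the sum $\sum_{|\tau_n(\mu' f)\lambda'| = n}\Theta_{\lambda'}$ required by the third case; the analogous but easier check covers the passage from $|\mu'| \in n\NN$ to $|\mu' f| \notin n\NN$. The remaining bookkeeping is routine: matching up the prefix conditions, confirming that incomparability of $\alpha$ and $\mu_0$ implies incomparability of $\alpha$ and $\mu$, and handling the degenerate cases in which some path has length zero.

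Finally, the first assertion~\eqref{it:commute mu 1} follows from~\eqref{it:commute mu 2}: if $|\mu| \in n\NN$ then either $|\mu| = 0$, which is covered by the base case above, or $\mu$ has positive length, in which case $r(\mu)$ is not a proper extension of $\mu$ while $\mu = r(\mu)\mu$ exhibits $\mu$ as $\alpha\mu'$ with $\alpha = r(\mu)$ and $\mu' = \mu$, so the second case of~\eqref{it:commute mu 2} yields $T^*_\mu\Theta_{r(\mu)} = \Theta_{s(\mu)}T^*_\mu$.
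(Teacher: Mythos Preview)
Your induction on $|\mu|$ by peeling off the last edge is correct, and the case analysis you outline (including the boundary at $|\tau_n(\mu')| = n-1$ where the third case collapses to the second) goes through as described. However, the paper takes a different and somewhat shorter route: it proves part~(\ref{it:commute mu 1}) \emph{first}, by a direct telescoping calculation that applies~\eqref{eq:nrep rel} edge-by-edge from the left, and then uses~(\ref{it:commute mu 1}) as a tool inside the case analysis for~(\ref{it:commute mu 2}). Specifically, in the case $\mu = \alpha\mu'$ the paper factors $\mu' = \mu''\tau_n(\mu')$ with $|\mu''| \in n\NN$, applies~(\ref{it:commute mu 1}) to move $\Theta$ past the entire block $T^*_{\mu''}$ in one step, and is then left only with the short tail $T^*_{\tau_n(\mu')}$ to handle. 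This avoids the edge-by-edge bookkeeping of your inductive step and the repeated branching on whether $|\mu'|$ has just crossed a multiple of $n$; the price is that one must prove~(\ref{it:commute mu 1}) separately rather than reading it off from~(\ref{it:commute mu 2}). Your approach has the virtue of being a single uniform induction with no forward reference, at the cost of a longer case split.
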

\begin{proof}
(\ref{it:commute mu 1}) We calculate
\begin{align}
T^*_{\mu} \Theta_{r(\mu)}
    &= T^*_{\mu_{|\mu|}} \cdots T^*_{\mu_2} T^*_{\mu_1} \Theta_{r(\mu_1)}
    = T^*_{\mu_{|\mu|}} \cdots T^*_{\mu_2} \Big(\sum_{\mu_1\lambda \in E^n} \Theta_{\lambda} T^*_{\mu_1}\Big) \nonumber\\
    &= T^*_{\mu_{|\mu|}} \cdots T^*_{\mu_3} \Big(\sum_{\mu_1\mu_2\lambda \in E^n} \Theta_{\lambda} T^*_{\mu_1\mu_2}\Big)
    = \cdots = \Theta_{s(\mu)} T^*_\mu.\label{eq:goodcalc}
\end{align}

(\ref{it:commute mu 2}) First suppose that $\alpha = \mu\alpha'$. Then
\[
T^*_\mu \Theta_\alpha
    = T^*_{\mu_{|\mu|}} \cdots T^*_{\mu_1} \Theta_\alpha
    = T^*_{\mu_{|\mu|}} \cdots T^*_{\mu_2} \Theta_{\alpha_2\cdots \alpha_n} T^*_{\mu_1}
    = \cdots = \Theta_{\alpha'} T^*_\mu.
\]
Now suppose that $\mu = \alpha\mu'$. Write $\mu' = \mu''\tau_n(\mu')$. Then $|\mu''| \in
n\NN$, so we calculate, using part~(\ref{it:commute mu 1}) at the fourth equality,
\[
T^*_\mu\Theta_\alpha
    = T^*_{\mu'} T^*_\alpha \Theta_\alpha
    = T^*_{\mu'} \Theta_{s(\alpha)} T^*_\alpha
    = T_{\tau_n(\mu')}^* T_{\mu''}^* \Theta_{r(\mu'')} T^*_\alpha
    = T_{\tau_n(\mu')}^* \Theta_{s(\mu'')} T^*_{\alpha\mu''}.
\]
If $|\mu'| \in n\NN$, then $\alpha\mu'' = \mu$ and $\tau_n(\mu') = s(\mu)$, so the
preceding displayed equation gives $T^*_\mu \Theta_{\alpha} = \Theta_{s(\mu)} T^*_\mu$.
Otherwise, we repeat the first $|\mu''|$ steps of the calculation~\eqref{eq:goodcalc} to
obtain
\[
T^*_\mu\Theta_\alpha
    = \sum_{|\tau_n(\mu')\lambda| = n} \Theta_\lambda T^*_\mu.
\]
Finally, if $\mu\not= \alpha\mu'$ and $\alpha\not= \mu\alpha'$, then we can write $\mu =
\lambda e\mu'$ and $\alpha = \lambda f \alpha'$ for distinct $e,f \in E^1$. Using the
first case in part~(\ref{it:commute mu 2}), we obtain
\[
T^*_\mu \Theta_\alpha
    = T^*_{\mu'} T^*_e \Theta_{f\alpha'} T^*_\lambda,
\]
which is zero by the displayed relation in Definition~\ref{dfn:Toeplitz nrep}.
\end{proof}

\begin{lem}\label{lem:mpctn formula}
Let $E$ be a row-finite directed graph with no sources, take $n > 0$ and suppose that
$(T, Q, \Theta)$ is a Toeplitz $n$-representation of $E$. For $\alpha,\beta, \gamma,
\delta \in E^*$ and $\mu,\nu \in E^{<n}$,
\[
(T_\alpha \Theta_\mu T^*_\beta) (T_\gamma \Theta_\nu T^*_\delta)
    = \begin{cases}
        T_\alpha \Theta_\mu T^*_{\delta\beta'}
            &\text{ if $\beta = \gamma\beta'$ and $\nu = \beta'\mu$} \\
        T_\alpha \Theta_\mu T^*_{\delta\nu\rho}
            &\text{ if $\beta = \gamma\nu\rho$ with $|\rho\mu| \in n\NN$}\\
        T_{\alpha\gamma'} \Theta_\nu T^*_\delta
            &\text{ if $\gamma = \beta\gamma'$ and $\mu = \gamma'\nu$} \\
        T_{\alpha\mu\rho} \Theta_\nu T^*_\delta
            &\text{ if $\gamma = \beta\mu\rho$ with $|\rho\nu| \in n\NN$}\\
        0   &\text{ otherwise.}
    \end{cases}
\]
\end{lem}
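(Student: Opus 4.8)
Before the author's proof, here is how I would approach Lemma~\ref{lem:mpctn formula}.

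The plan is to expand the product as
\[
(T_\alpha \Theta_\mu T^*_\beta)(T_\gamma \Theta_\nu T^*_\delta) = T_\alpha \Theta_\mu\,(T^*_\beta T_\gamma)\,\Theta_\nu T^*_\delta,
\]
evaluate the middle factor $T^*_\beta T_\gamma$ using the relations $T^*_e T_f = \delta_{e,f}Q_{s(e)}$, and then tidy up the result with Lemma~\ref{lem:"covariance"} and the orthogonality of the $\Theta_\mu$. A standard computation with partial isometries gives $T^*_\beta T_\gamma = T_{\gamma'}$ when $\gamma = \beta\gamma'$, $T^*_\beta T_\gamma = T^*_{\beta'}$ when $\beta = \gamma\beta'$ (the two formulas agreeing with $Q_{s(\beta)}$ when $\beta = \gamma$), and $T^*_\beta T_\gamma = 0$ in every other case---that is, when $\beta$ and $\gamma$ share a maximal common initial segment and then diverge at distinct edges, or when $r(\beta) \neq r(\gamma)$. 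In this last case the whole product vanishes, matching the final clause of the statement, since one checks directly that the hypotheses of the other four clauses then all fail. So the work is in the two remaining cases.

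Suppose first that $\beta = \gamma\beta'$, so the product equals $T_\alpha \Theta_\mu T^*_{\beta'} \Theta_\nu T^*_\delta$, and apply Lemma~\ref{lem:"covariance"}(\ref{it:commute mu 2}) to $T^*_{\beta'}\Theta_\nu$. If $\beta'$ and $\nu$ are incomparable this is $0$. If $\nu = \beta'\nu'$, then $T^*_{\beta'}\Theta_\nu = \Theta_{\nu'}T^*_{\beta'}$; since $\Theta_\mu\Theta_{\nu'}$ equals $\Theta_\mu$ or $0$ according as $\nu' = \mu$ or not, and $T^*_{\beta'}T^*_\delta = T^*_{\delta\beta'}$, the product collapses to $T_\alpha \Theta_\mu T^*_{\delta\beta'}$ precisely when $\nu = \beta'\mu$, which is the first clause. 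If instead $\beta' = \nu\rho$, then Lemma~\ref{lem:"covariance"}(\ref{it:commute mu 2}) gives either $\Theta_{s(\beta')}T^*_{\beta'}$ (when $|\rho| \in n\NN$) or $\sum_{\tau_n(\rho)\lambda \in E^n} \Theta_\lambda T^*_{\beta'}$ (when $|\rho| \notin n\NN$); left-multiplying by $\Theta_\mu$ and using orthogonality of the $\Theta$'s retains exactly the summand indexed by $\mu$, which survives iff $\mu = s(\rho)$ in the first subcase and iff $\tau_n(\rho)\mu \in E^n$ in the second, and then the product equals $T_\alpha\Theta_\mu T^*_{\delta\nu\rho}$. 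The case $\gamma = \beta\gamma'$ is handled the same way after writing $\Theta_\mu T_{\gamma'} = (T^*_{\gamma'}\Theta_\mu)^*$ and applying Lemma~\ref{lem:"covariance"}(\ref{it:commute mu 2}) to $T^*_{\gamma'}\Theta_\mu$: the subcase $\mu = \gamma'\mu'$ gives $T_{\alpha\gamma'}\Theta_\nu T^*_\delta$ when $\mu = \gamma'\nu$ (the third clause), and the subcase $\gamma' = \mu\rho$ gives $T_{\alpha\mu\rho}\Theta_\nu T^*_\delta$ under the conditions $\nu = s(\rho)$ or $\tau_n(\rho)\nu \in E^n$ according as $|\rho| \in n\NN$ or not.

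To finish, I would record the elementary observation that reconciles these subcase conditions with the uniform hypotheses ``$|\rho\mu| \in n\NN$'' and ``$|\rho\nu| \in n\NN$'' in the statement: for $\rho \in E^*$ and $\sigma \in E^{<n}$ with $r(\sigma) = s(\rho)$, writing $|\rho| = qn + |\tau_n(\rho)|$ with $0 \le |\tau_n(\rho)| < n$ and noting that $0 \le |\tau_n(\rho)| + |\sigma| < 2n$, one has $|\rho\sigma| \in n\NN$ if and only if either $|\rho| \in n\NN$ and $\sigma = s(\rho)$ is a vertex, or $\tau_n(\rho)\sigma \in E^n$; moreover these two alternatives are mutually exclusive. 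Applying this with $(\rho,\sigma) = (\rho,\mu)$ in the case $\beta = \gamma\nu\rho$ and with $(\rho,\sigma) = (\rho,\nu)$ in the case $\gamma = \beta\mu\rho$ turns the surviving-summand conditions of the previous paragraph into exactly the second and fourth clauses. Since the cases ``$\gamma$ is a prefix of $\beta$'' and ``$\beta$ is a prefix of $\gamma$'' exhaust the situations with $T^*_\beta T_\gamma \neq 0$ and overlap only when $\beta = \gamma$ (where the relevant right-hand sides plainly agree), and since the product vanishes in every situation not covered by the first four clauses, the formula follows. The one genuinely delicate point is this last matching of conditions---carrying the implicit source--range constraints along while translating the ``$|\rho| \in n\NN$ versus sum over $\tau_n(\rho)\lambda \in E^n$'' dichotomy of Lemma~\ref{lem:"covariance"} into the single clean hypothesis ``$|\rho\sigma| \in n\NN$''---but it is purely a bookkeeping matter.
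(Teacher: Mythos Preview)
Your proposal is correct and follows essentially the same approach as the paper's own proof: reduce $T^*_\beta T_\gamma$ via the Toeplitz--Cuntz--Krieger relations, apply Lemma~\ref{lem:"covariance"}(\ref{it:commute mu 2}) to push the remaining $T^*$ past a $\Theta$, and use orthogonality of the $\Theta$'s to isolate the surviving term. The paper handles only the case $|\beta| \ge |\gamma|$ and dispatches the other by taking adjoints of the whole identity, whereas you treat $\gamma = \beta\gamma'$ directly by taking the adjoint of $\Theta_\mu T_{\gamma'}$; and you spell out the equivalence ``$|\rho\sigma| \in n\NN$ iff ($|\rho| \in n\NN$ and $\sigma = s(\rho)$) or $\tau_n(\rho)\sigma \in E^n$'' more carefully than the paper's one-line remark---but these are cosmetic differences.
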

\begin{proof}
We consider the case where $|\beta| \ge |\gamma|$; the case where $|\gamma|
> |\beta|$ will then follow by taking adjoints. By
\cite[Corollary~1.14(b)]{Raeburn:Graphalgebras05}, we have
\[
(T_\alpha \Theta_\mu T^*_\beta) (T_\gamma \Theta_\nu T^*_\delta)
    = \begin{cases}
        T_\alpha \Theta_\mu T^*_{\beta'} \Theta_\nu T^*_\delta
            &\text{ if $\beta = \gamma\beta'$} \\
        0   &\text{ otherwise.}
    \end{cases}
\]
Suppose that $\beta = \gamma\beta'$. By Lemma~\ref{lem:"covariance"}(\ref{it:commute mu
2}) we have
\begin{align*}
(T_\alpha \Theta_\mu T^*_\beta) (T_\gamma \Theta_\nu T^*_\delta)
    &= \begin{cases}
        T_\alpha \Theta_\mu \Theta_{\nu'} T^*_{\delta\beta'}
            &\text{ if $\nu = \beta'\nu'$} \\
        T_\alpha \Theta_\mu \Theta_{s(\beta')} T^*_{\delta\beta'}
            &\text{ if $\beta' = \nu\rho$ with $|\rho| \in n\NN$} \\
        T_\alpha \Theta_\mu \sum_{\tau_n(\rho)\lambda \in E^n} \Theta_\lambda T^*_{\delta\beta'}
            &\text{ if $\beta' = \nu\rho$ with $|\rho| \not\in n\NN$} \\
        0   &\text{ otherwise.}
    \end{cases}\\
    &= \begin{cases}
        T_\alpha \Theta_\mu T^*_{\delta\beta'}
            &\text{ if $\nu = \beta'\mu$} \\
            &\text{ \quad or $\beta' = \nu\rho$ with $|\rho| \in n\NN$ and $\mu = s(\beta)$}\\
            &\text{ \quad or $\beta' = \nu\rho$ and $\tau_n(\rho)\mu \in E^n$,}\\
        0 &\text{ otherwise.}
    \end{cases}
\end{align*}

Since $\tau_n(\rho)\mu \in E^n$ if and only if $|\rho\mu| \in n\NN$, the result follows.
\end{proof}

\begin{thm}\label{thm:TEn and CEn}
Let $E$ be a row-finite directed graph with no sources, and let $(t_{(e,\mu)}, q_\mu)$
be the universal Toeplitz--Cuntz--Krieger $E(n)$-family in $\Tt C^*(E(n))$. Then the
elements
\[
t_{n,e} := \sum_{(e,\mu) \in E(n)^1} t_{(e,\mu)}, \quad
    q_{n,v} := \sum_{\mu \in vE^{<n}} q_\mu,\quad\text{ and }\quad
    \theta_{n,\mu} := q_\mu
\]
constitute a Toeplitz $n$-representation of $E$ and generate $\Tt C^*(E(n))$. For every
Toeplitz $n$-representation $(T, Q, \Theta)$ of $E$ in a $C^*$-algebra $B$, there is a
$C^*$-homomorphism $\pi_{T, Q, \Theta} : \Tt C^*(E(n)) \to B$ such that
$\pi_{T,Q,\Theta}(t_{n,e}) = T_e$, $\pi_{T,Q,\Theta}(q_{n,v}) = Q_v$ and
$\pi_{T,Q,\Theta}(\theta_{n,\mu}) = \Theta_\mu$.

If $(T, Q)$ is a Cuntz--Krieger $E$-family, then $\pi_{T, Q, \Theta}$ factors through a
homomorphism $\tilde\pi_{T, Q, \Theta} : C^*(E(n)) \to B$.
\end{thm}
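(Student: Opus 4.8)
The plan is to set up a dictionary between Toeplitz $n$-representations of $E$ and Toeplitz--Cuntz--Krieger $E(n)$-families and then transport the universal properties of $\Tt C^*(E(n))$ and $C^*(E(n))$ across it. The one genuinely new input is a description of the range map $r_n$ of $E(n)$: for $\nu\in E^{<n}=E(n)^0$ the preimage $r_n^{-1}(\nu)$ is the single edge $(\nu_1,\nu_2\cdots\nu_{|\nu|})$ when $|\nu|\ge1$, and equals $\{(e,\mu):e\in\nu E^1,\ \mu\in s(e)E^{n-1}\}$ when $\nu$ is a vertex; in particular $r_n^{-1}(\nu)$ is finite, and $r_n(e,\mu)\in vE^{<n}$ exactly when $r(e)=v$. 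With this in hand I would first verify directly that $(t_{n,e},q_{n,v},\theta_{n,\mu})$ is a Toeplitz $n$-representation of $E$: the relations $t_{n,e}^*t_{n,e}=q_{n,s(e)}$, mutual orthogonality of the $q_{n,v}$ and of the $\theta_{n,\mu}=q_\mu$, and $q_{n,v}\ge\sum_{e\in vE^1}t_{n,e}t_{n,e}^*$ drop out of the Toeplitz--Cuntz--Krieger relations for $(t_{(e,\mu)},q_\mu)$ together with the above description of $r_n^{-1}$. The only substantive point is~\eqref{eq:nrep rel}: one computes $t_{n,e}^*\theta_{n,\mu}=\sum_{\nu\in s(e)E^{<n}}t_{(e,\nu)}^*q_\mu$, notes that $t_{(e,\nu)}^*q_\mu=t_{(e,\nu)}^*$ when $r_n(e,\nu)=\mu$ and vanishes otherwise, and reads off the three cases --- $\mu=e\mu'$ forces $\nu=\mu'$ (so $t_{n,e}^*\theta_{n,\mu}=t_{(e,\mu')}^*=\theta_{n,\mu'}t_{n,e}^*$); $\mu=r(e)$ forces $|\nu|=n-1$ (so $t_{n,e}^*\theta_{n,\mu}=\sum_{\nu\in s(e)E^{n-1}}t_{(e,\nu)}^*=\sum_{e\nu\in E^n}\theta_{n,\nu}t_{n,e}^*$); otherwise the sum is empty. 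Since $t_{(e,\mu)}=t_{n,e}\theta_{n,\mu}$ and $q_\mu=\theta_{n,\mu}$, these elements generate $\Tt C^*(E(n))$.

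For the universal property, given a Toeplitz $n$-representation $(T,Q,\Theta)$ of $E$ in $B$, I would put $S_{(e,\mu)}:=T_e\Theta_\mu$ and $P_\nu:=\Theta_\nu$. Then $S_{(e,\mu)}^*S_{(e,\mu)}=\Theta_\mu Q_{s(e)}\Theta_\mu=\Theta_\mu=P_{s_n(e,\mu)}$, using $\Theta_\mu\le Q_{r(\mu)}=Q_{s(e)}$; and~\eqref{eq:nrep rel} gives $\Theta_{r_n(e,\mu)}T_e\Theta_\mu=T_e\Theta_\mu$, so $S_{(e,\mu)}S_{(e,\mu)}^*\le P_{r_n(e,\mu)}$. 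The range projections with a common $E(n)$-range are mutually orthogonal because the $T_eT_e^*$ and the $\Theta_\mu$ are, and $r_n^{-1}(\nu)$ is finite, so $(S,P)$ is a Toeplitz--Cuntz--Krieger $E(n)$-family. The universal property of $\Tt C^*(E(n))$ then yields $\pi_{T,Q,\Theta}$ with $\pi_{T,Q,\Theta}(t_{(e,\mu)})=T_e\Theta_\mu$ and $\pi_{T,Q,\Theta}(q_\mu)=\Theta_\mu$, whence $\pi_{T,Q,\Theta}(t_{n,e})=T_e\sum_{\mu\in s(e)E^{<n}}\Theta_\mu=T_eQ_{s(e)}=T_e$, $\pi_{T,Q,\Theta}(q_{n,v})=Q_v$ and $\pi_{T,Q,\Theta}(\theta_{n,\mu})=\Theta_\mu$; uniqueness is immediate from generation.

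For the final statement, when $(T,Q)$ is a Cuntz--Krieger $E$-family I would upgrade the previous computation to show $(S,P)$ is a Cuntz--Krieger $E(n)$-family, so that $\pi_{T,Q,\Theta}$ annihilates the ideal by which $\Tt C^*(E(n))$ is quotiented to obtain $C^*(E(n))$, producing the factorisation $\tilde\pi_{T,Q,\Theta}$. For $|\nu|\ge1$, writing $\nu=\nu_1\nu'$, relation~\eqref{eq:nrep rel} and the Cuntz--Krieger relation $Q_{r(\nu)}=\sum_{f\in r(\nu)E^1}T_fT_f^*$ give $\Theta_\nu=\Theta_\nu T_{\nu_1}T_{\nu_1}^*$, hence $\sum_{r_n(e,\mu)=\nu}S_{(e,\mu)}S_{(e,\mu)}^*=T_{\nu_1}\Theta_{\nu'}T_{\nu_1}^*=\Theta_\nu=P_\nu$; for $\nu$ a vertex $v$, relation~\eqref{eq:nrep rel} gives $T_e\bigl(\sum_{\mu\in s(e)E^{n-1}}\Theta_\mu\bigr)T_e^*=\Theta_vT_eT_e^*$, and summing over $e\in vE^1$ and applying the Cuntz--Krieger relation once more gives $\sum_{r_n(e,\mu)=v}S_{(e,\mu)}S_{(e,\mu)}^*=\Theta_vQ_v=\Theta_v=P_v$. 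The main obstacle throughout is exactly this bookkeeping --- identifying, for each $\nu$, which pairs $(e,\mu)$ satisfy $r_n(e,\mu)=\nu$ and matching the resulting sums of range projections to the correct branch of~\eqref{eq:nrep rel} --- with the vertex case, which simultaneously uses~\eqref{eq:nrep rel} and the Cuntz--Krieger relation of $E$, being the delicate one.
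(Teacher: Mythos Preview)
Your proposal is correct and follows essentially the same route as the paper: both set up the dictionary $T_{(e,\mu)}:=T_e\Theta_\mu$, $Q_\mu:=\Theta_\mu$ and transport the universal property of $\Tt C^*(E(n))$ across it, with generation via $t_{(e,\mu)}=t_{n,e}\theta_{n,\mu}$. The only difference is in the Cuntz--Krieger step: the paper verifies the $E(n)$ Cuntz--Krieger relation by summing over all $\mu\in vE^{<n}$ at once and observing $\sum_{\mu\in vE^{<n}}\sum_{(e,\nu)\in\mu E(n)^1}T_{(e,\nu)}T_{(e,\nu)}^*=\sum_{e\in vE^1}T_eT_e^*=Q_v=\sum_\mu P_\mu$, which combined with the termwise Toeplitz inequalities $P_\mu\ge\sum_{(e,\nu)\in\mu E(n)^1}T_{(e,\nu)}T_{(e,\nu)}^*$ forces equality at each $\mu$; you instead check each vertex $\nu\in E(n)^0$ directly, splitting into the cases $|\nu|\ge1$ and $\nu\in E^0$ and using~\eqref{eq:nrep rel} explicitly --- this is a bit more work but equally valid, and arguably more transparent.
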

\begin{proof}
Routine calculations show that $(t, q, \theta)$ is a Toeplitz $n$-representation of $E$.
We have $t_{(e,\mu)} = t_{n,e} \theta_{n,\mu}$ for each $(e,\mu) \in E(n)^1$ and $q_\mu =
\theta_{n,\mu}$ for each $\mu \in E(n)^0$, and so the $t_{n,e}$, the $q_{n,v}$ and the
$\theta_{n,\mu}$ generate $\Tt C^*(E(n))$.

Fix a Toeplitz $n$-representation $(T, Q, \Theta)$. Routine calculations show that the
elements $T_{(e,\mu)} := T_e \Theta_\mu$ and $Q_\mu := \Theta_\mu$ form a
Toeplitz--Cuntz--Krieger $E(n)$ family, and so induce the desired homomorphism $\pi_{T,
Q, \Theta}$. For each $v \in E^0$, we have
\[
\sum_{\mu \in vE^{<n}} \sum_{(e,\nu) \in \mu E(n)^1} T_{(e,\nu)} T^*_{(e,\nu)}
    = \sum_{e \in v E^1} \sum_{(e,\nu) \in E(n)^1} T_{(e,\nu)} T^*_{(e,\nu)}
    = \sum_{e \in vE^1} T_e T^*_e.
\]
So if $(T, Q)$ is a Cuntz--Krieger $E$-family, then the $T_{(e,\mu)}$ and $Q_\mu$ form a
Cuntz--Krieger $E(n)$ family and so $\pi_{T, Q, \Theta}$ factors through $\tilde\pi_{T,
Q, \Theta} : C^*(E(n)) \to B$.
\end{proof}

\begin{ntn}
Using Theorem~\ref{thm:TEn and CEn}, we write $\Tt(E, n)$ for $\Tt C^*(E(n))$ and regard
it as the universal $C^*$-algebra generated by a Toeplitz $n$-representation
$(t_{n,e}, q_{n,v}, \theta_{n,\mu})$ of $E$. We also write $C^*(E, n)$ for
$C^*(E(n))$, and regard it as the universal $C^*$-algebra generated by a Cuntz--Krieger
$n$-representation $(s_{n,e}, p_{n,v}, \varepsilon_{n,\mu})$.
\end{ntn}

Next, we describe the homomorphisms $\Tt C^*(E(n)) \hookrightarrow \Tt C^*(E(mn))$ and
$C^*(E(n)) \hookrightarrow C^*(E(mn))$ of Kribs and Solel in terms of the universal
properties just described.

\begin{prp}\label{prp:inclusions}
Let $E$ be a row-finite directed graph with no sources. Take integers $m,n \ge 1$. There
is an injective homomorphism $i_{n,mn} : \Tt(E, n) \to \Tt(E, mn)$ such that
\[
i_{n,mn}(t_{n,e}) = t_{mn,e},\qquad
i_{n,mn}(q_{n,v}) = q_{mn,v},\quad\text{ and }\quad
i_{n,mn}(\theta_{n,\mu}) = \sum_{\nu \in E^{<mn},\, [\nu]_n = \mu} \theta_{mn,\nu}.
\]
Moreover $i_{n,mn}$ descends to an injection of $C^*(E,n)$ into $C^*(E, mn)$.
\end{prp}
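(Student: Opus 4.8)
The plan is to produce $i_{n,mn}$ directly from the universal property of $\Tt(E,n)$ in Theorem~\ref{thm:TEn and CEn}. Inside $\Tt(E, mn)$ consider the triple $(T, Q, \Theta)$ given by $T_e := t_{mn,e}$, $Q_v := q_{mn,v}$ and $\Theta_\mu := \sum_{\nu \in E^{<mn},\, [\nu]_n = \mu} \theta_{mn,\nu}$. The first step is to check that $(T, Q, \Theta)$ is a Toeplitz $n$-representation of $E$; once this is done, Theorem~\ref{thm:TEn and CEn} hands us a homomorphism $i_{n,mn} := \pi_{T,Q,\Theta}\colon \Tt(E,n) \to \Tt(E, mn)$ taking precisely the asserted values on $t_{n,e}$, $q_{n,v}$ and $\theta_{n,\mu}$, with nothing further to prove there.

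To carry out that check: $(T,Q) = (t_{mn,e}, q_{mn,v})$ is a Toeplitz--Cuntz--Krieger $E$-family because it is part of the generating Toeplitz $mn$-representation of $\Tt(E,mn)$. The claims that the $\Theta_\mu$ are mutually orthogonal projections and that $\sum_{\mu \in vE^{<n}} \Theta_\mu = Q_v$ for each $v \in E^0$ follow once one observes that $\nu \mapsto [\nu]_n$ maps $vE^{<mn}$ onto $vE^{<n}$ and partitions $vE^{<mn}$ into the fibres $\{\nu \in E^{<mn} : [\nu]_n = \mu\}$, $\mu \in vE^{<n}$, and then uses mutual orthogonality of the $\theta_{mn,\nu}$ together with the identity $\sum_{\nu \in vE^{<mn}} \theta_{mn,\nu} = q_{mn,v}$. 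The remaining relation~\eqref{eq:nrep rel} is where the work lies: I would expand $T^*_e\Theta_\mu = \sum_{[\nu]_n = \mu} t^*_{mn,e}\theta_{mn,\nu}$, evaluate each summand with Lemma~\ref{lem:"covariance"}(\ref{it:commute mu 2}) in $\Tt(E, mn)$ with the single edge $e$ in the role of $\mu$, and reassemble, tracking how prepending $e$ interacts with the residue maps $[\,\cdot\,]_n$ and $[\,\cdot\,]_{mn}$ --- distinguishing whether $\mu = e\mu'$, $\mu = r(e)$, or neither, and within those the divisibility of the relevant path lengths by $n$ and by $mn$. I expect this case analysis to be the main obstacle: no single step is hard, but there are several cases to keep straight.

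For injectivity of $i_{n,mn}$ I would identify it with the connecting homomorphism of Kribs and Solel. Using $t_{(e,\mu)} = t_{n,e}\theta_{n,\mu}$ and the formulas above, one computes $i_{n,mn}(t_{(e,\mu)}) = \sum_{\nu \in s(e)E^{<mn},\, [\nu]_n = \mu} t_{(e,\nu)}$ and $i_{n,mn}(q_\mu) = \sum_{\nu \in E^{<mn},\, [\nu]_n = \mu} q_\nu$ for $(e,\mu) \in E(n)^1$ and $\mu \in E(n)^0$, which exhibits $i_{n,mn}$ as the inclusion $\Tt C^*(E(n)) \hookrightarrow \Tt C^*(E(mn))$ of \cite[Theorem~4.2]{KribsSolel:JAMS07}; hence it is injective. (A self-contained alternative is to apply the uniqueness theorem for Toeplitz algebras of row-finite graphs with no sources \cite{FowlerRaeburn:IUMJ99}: one has $i_{n,mn}(q_\mu) \ge \theta_{mn,\mu} \ne 0$ for every $\mu \in E^{<n}$, and a computation with Lemma~\ref{lem:mpctn formula} shows that $i_{n,mn}$ carries each gap projection $q_\mu - \sum_{(e,\nu) \in \mu E(n)^1} t_{(e,\nu)}t^*_{(e,\nu)}$ of $\Tt(E,n)$ to a nonzero sum of gap projections of $\Tt(E, mn)$.)

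For the descent to the Cuntz--Krieger algebras, compose $i_{n,mn}$ with the canonical quotient $\Tt(E, mn) \to C^*(E, mn)$. The composite $\Tt(E, n) \to C^*(E, mn)$ carries the generating Toeplitz $n$-representation $(t_{n,e}, q_{n,v}, \theta_{n,\mu})$ of $\Tt(E,n)$ to the triple $(s_{mn,e},\, p_{mn,v},\, \sum_{[\nu]_n = \mu}\varepsilon_{mn,\nu})$; since $(s_{mn,e}, p_{mn,v})$ is a genuine Cuntz--Krieger $E$-family by the construction of $C^*(E, mn)$, this is a Cuntz--Krieger $n$-representation of $E$ in $C^*(E,mn)$, so the final clause of Theorem~\ref{thm:TEn and CEn} factors the composite through a homomorphism $\bar{\imath}_{n,mn}\colon C^*(E, n) \to C^*(E, mn)$ --- the asserted descent of $i_{n,mn}$. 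Finally, I would show $\bar{\imath}_{n,mn}$ is injective using the gauge-invariant uniqueness theorem for graph $C^*$-algebras \cite{Raeburn:Graphalgebras05}: it is equivariant for the gauge actions on $C^*(E,n)$ and $C^*(E,mn)$ (each carries the generator associated to an edge of $E$ to degree one and fixes the vertex and diagonal generators), and $\bar{\imath}_{n,mn}(\varepsilon_{n,\mu}) \ge \varepsilon_{mn,\mu} \ne 0$ for every $\mu \in E(n)^0 = E^{<n}$.
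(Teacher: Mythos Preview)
Your proposal is correct and follows essentially the same approach as the paper: define the same triple $(T,Q,\Theta)$ in $\Tt(E,mn)$, verify it is a Toeplitz $n$-representation, invoke the universal property, and for the descent use the gauge-invariant uniqueness theorem. The one minor difference is that for injectivity of $i_{n,mn}$ the paper goes directly with your ``self-contained alternative'' --- it computes that $i_{n,mn}$ carries each gap projection $q_\mu - \sum_{(e,\tau)\in \mu E(n)^1} t_{(e,\tau)}t^*_{(e,\tau)}$ to the sum $\sum_{[\nu]_n=\mu}\big(q_\nu - \sum_{(e,\tau)\in \nu E(mn)^1} t_{(e,\tau)}t^*_{(e,\tau)}\big)$ and then applies \cite[Theorem~4.1]{FowlerRaeburn:IUMJ99} --- rather than appealing to \cite{KribsSolel:JAMS07} as in your primary route.
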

\begin{proof}
For $e \in E^1$, $v \in E^0$ and $\mu \in E^{<n}$, define $T_e := t_{mn,e}$, $Q_v :=
q_{mn,v}$ and $\Theta_\mu = \sum_{\nu \in E^{<mn}, [\nu]_n = \mu} \theta_{mn,\nu}$.
Straightforward calculations show that $(T, Q, \Theta)$ is a Toeplitz $n$-representation
of $E$, so the universal property of $\Tt(E, n)$ gives a homomorphism $i_{n,mn}$
satisfying the desired formulas. Using the formulas for the generators of $\Tt (E, n)$ in
Theorem~\ref{thm:TEn and CEn}, we see that for $\mu \in E^{<n}$,
\[
i_{n,mn} \Big(q_\mu - \sum_{(e,\tau) \in \mu E(n)^1} t_{(e,\tau)} t^*_{(e,\tau)}\Big)
    = \sum_{\nu \in E^{<mn}, [\nu]_n = \mu}
        \Big(q_{\nu} - \sum_{(e,\tau) \in \nu E(mn)^1} t_{(e,\tau)} t^*_{(e,\tau)}\Big).
\]
Theorem~4.1 of \cite{FowlerRaeburn:IUMJ99} implies that each term on the right hand side
of the preceding displayed equation is nonzero, and then also that $i_{n,mn}$ is
injective. Hence $i_{n,mn}$ is also injective.

For the final statement, observe that $i_{n,mn}$ clearly preserves the Cuntz--Krieger
relation, so it descends to a homomorphism $\tilde{i}_{n,mn} : C^*(E,n) \to C^*(E, mn)$.
A routine application of the gauge-invariant uniqueness theorem
\cite[Theorem~2.1]{BatesPaskEtAl:NYJM00} for $C^*(E(n))$ shows that $\tilde{i}_{n,mn}$ is
injective.
\end{proof}

Using the homomorphisms of the preceding proposition, we can form the direct limits
$\varinjlim \Tt(E, n_k)$ and $\varinjlim C^*(E,n_k)$. We write $i_{n_k,n_l} : \Tt(E, n_k)
\to \Tt(E, n_l)$ for the connecting homomorphism with $k < l$, and we write $i_{n_k,
\infty} : \Tt(E, n_k) \to \varinjlim \Tt(E, n_k)$ for the canonical inclusion. We will
also use these same symbols to denote the corresponding maps in the direct system
associated to the $C^*(E, n_k)$.

Fix a directed graph $E$. For $m,n \in \NN \setminus\{0\}$ such that $m \mid n$, we
define $p_{n,m} : E^{<n} \to E^{<m}$ by $p_{n,m}(\nu) = [\nu]_m$. Consider a sequence
$(n_k)^\infty_{k=1}$ such that $n_k \mid n_{k+1}$ for all $k$. The projective limit
$(\varprojlim E^{<n_k}, p_{n_{k+1}, n_k})$ can be realised as the topological subspace
\[
\Big\{(\mu_k)^\infty_{k=1} \in \prod^\infty_{k=1} E^{<n_k} :
    \mu_k = [\mu_{k+1}]_{n_k}\text{ for all $k \in \NN$}\Big\}.
\]

For a sequence $(n_k)^\infty_{k=1}$ as above and a directed graph $E$, given $k \in \NN$
and $\mu \in E^{<n_k}$, we write $Z(\mu,k)$ for the cylinder set $\{(\nu_i)^\infty_{i=1}
\in \varprojlim E^{<n_k} : \nu_k = \mu\}$. Observe that the $Z(\mu,k)$ are the canonical
compact open basis sets for the projective limit space regarded as a subspace of the
infinite product $\prod^\infty_{k=1} E^{<n_k}$. We write $\chi_{Z(\mu,k)}$ for the
characteristic function of $Z(\mu,k) \subseteq \varprojlim E^{<n_k}$.

\begin{dfn}\label{def:omega-rep}
Let $E$ be a row-finite directed graph with no sources, and suppose that $\omega =
(n_k)^\infty_{k=1}$ is a sequence of nonzero natural numbers such that $n_k \mid n_{k+1}$
for all $k$. A \emph{Toeplitz $\omega$-representation of $E$} is a triple $(T, Q, \psi)$
consisting of a Toeplitz--Cuntz--Krieger $E$-family in a $C^*$-algebra $B$ and a
homomorphism $\psi : C_0(\varprojlim E^{<n_k}) \to B$ such that $Q_w =
\psi(\chi_{Z(w,1)})$ for all $w \in E^0$, and
\[
T^*_e \psi(\chi_{Z(\mu,k)})
    = \begin{cases}
        \psi(\chi_{Z(\mu', k)})T^*_e &\text{ if $\mu = e\mu'$}\\
        \sum_{e\lambda \in E^{n_k}} \psi(\chi_{Z(\lambda,k)}) T^*_e &\text{ if $\mu = r(e)$} \\
        0 &\text{ otherwise}
    \end{cases}
\]
for all $e \in E^1$, $k \in \NN$ and $\mu \in E^{<n_k}$. If the pair $(T,Q)$ is a
Cuntz--Krieger $E$-family, then we call $(T, Q, \psi)$ a \emph{Cuntz--Krieger
$\omega$-representation}, or just an \emph{$\omega$-representation} of $E$.
\end{dfn}

We show that the universal $C^*$-algebra generated by an $\omega$-representation
coincides with Kribs and Solel's algebra $\varinjlim C^*(E(n_k))$. We first need a
multiplication formula analogous to that of Lemma~\ref{lem:mpctn formula}. To lighten
notation a bit, given a homomorphism $\psi : C_0(\varprojlim E^{<n_k}) \to B$, we will
write $\psi_{(\mu,k)}$ for the image of $\chi_{Z(\mu,k)}$ under $\psi$, which is a projection in $B$.

\begin{lem}\label{lem:omega mpctn}
Let $E$ be a row-finite directed graph with no sources, and let $\omega =
(n_k)^\infty_{k=1}$ be a sequence of nonzero natural numbers such that $n_k \mid n_{k+1}$
for all $k$. Let $(T, Q, \psi)$ be a Toeplitz $\omega$-representation of $E$. For
$\alpha,\beta,\gamma, \delta \in E^*$, $k \ge 1$ and $\mu,\nu \in E^{<n_k}$, we have
\[
(T_\alpha \psi_{(\mu,k)} T^*_\beta)(T_\gamma \psi_{(\nu,k)} T^*_\delta)
    = \begin{cases}
        T_\alpha \psi_{(\mu,k)} T^*_{\delta\beta'}
            &\text{ if $\beta = \gamma\beta'$ and $\nu = \beta'\mu$} \\
        T_\alpha \psi_{(\mu,k)} T^*_{\delta\nu\rho}
            &\text{ if $\beta = \gamma\nu\rho$ with $|\rho\mu| \in n\NN$}\\
        T_{\alpha\gamma'} \psi_{(\nu,k)} T^*_\delta
            &\text{ if $\gamma = \beta\gamma'$ and $\mu = \gamma'\nu$} \\
        T_{\alpha\mu\rho} \psi_{(\nu,k)} T^*_\delta
            &\text{ if $\gamma = \beta\mu\rho$ with $|\rho\nu| \in n\NN$}\\
        0   &\text{ otherwise.}
    \end{cases}
\]
In particular, $C^*(T, Q, \psi) = \clsp\{T_\alpha \psi_{(\mu,k)} T^*_\beta : k \ge 1, \mu
\in E^{<n_k}, \alpha,\beta \in E^*r(\mu)\}$.
\end{lem}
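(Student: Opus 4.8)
The multiplication formula is really Lemma~\ref{lem:mpctn formula} in disguise: the plan is to observe that, for each fixed $k$, the triple $(T, Q, \{\psi_{(\mu,k)} : \mu \in E^{<n_k}\})$ is a Toeplitz $n_k$-representation of $E$ in the sense of Definition~\ref{dfn:Toeplitz nrep}. To see this I would note that $(T,Q)$ is a Toeplitz--Cuntz--Krieger $E$-family by hypothesis; that the $\psi_{(\mu,k)}$ are mutually orthogonal projections because the cylinder sets $Z(\mu,k)$ with $\mu \in E^{<n_k}$ are pairwise disjoint and $\psi$ is a $*$-homomorphism; that $Q_v = \sum_{\mu \in vE^{<n_k}} \psi_{(\mu,k)}$ results from applying $\psi$ to the identity $\chi_{Z(v,1)} = \sum_{\mu \in vE^{<n_k}} \chi_{Z(\mu,k)}$ in $C_0(\varprojlim E^{<n_k})$; and that relation~\eqref{eq:nrep rel} with $n = n_k$ is, case by case, exactly the displayed relation of Definition~\ref{def:omega-rep} at level $k$. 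The four-case formula then follows directly by invoking Lemma~\ref{lem:mpctn formula} with $n = n_k$ and $\Theta_\mu = \psi_{(\mu,k)}$ (reading $n_k$ for $n$ in the conditions ``$|\rho\mu| \in n\NN$'').

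For the spanning statement, I would set $D := \clsp\{T_\alpha \psi_{(\mu,k)} T^*_\beta : k \ge 1,\ \mu \in E^{<n_k},\ \alpha,\beta \in E^*r(\mu)\}$ and show it is a $C^*$-subalgebra of $C^*(T,Q,\psi)$ containing a generating set. It is self-adjoint since $(T_\alpha \psi_{(\mu,k)} T^*_\beta)^* = T_\beta \psi_{(\mu,k)} T^*_\alpha$. To see it is closed under products I would first bring two spanning elements to a common second index $l > k$ using the refinement identity $\chi_{Z(\mu,k)} = \sum_{\lambda \in E^{<n_l},\, [\lambda]_{n_k} = \mu} \chi_{Z(\lambda,l)}$, which rewrites $T_\alpha \psi_{(\mu,k)} T^*_\beta = \sum_\lambda T_\alpha \psi_{(\lambda,l)} T^*_\beta$, and then apply the multiplication formula just proved. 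Restricting to $\alpha,\beta \in E^*r(\mu)$ loses nothing, since $\psi_{(\mu,k)} \le Q_{r(\mu)}$ (because $Z(\mu,k) \subseteq Z(r(\mu),1)$), so any product with $s(\alpha) \ne r(\mu)$ or $s(\beta) \ne r(\mu)$ is zero.

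It remains to check that $D$ contains the generators. One has $Q_v = \psi_{(v,1)} = T_v\psi_{(v,1)}T^*_v$ and, using $T_e = T_eQ_{s(e)}$, also $T_e = T_e\psi_{(s(e),1)}T^*_{s(e)}$, both lying in $D$. For the image of $\psi$, the linear span of $\{\chi_{Z(\mu,k)} : k \ge 1,\ \mu \in E^{<n_k}\}$ is a $*$-subalgebra of $C_0(\varprojlim E^{<n_k})$ that separates points and vanishes nowhere, hence is dense by the locally compact Stone--Weierstrass theorem; so $\psi(C_0(\varprojlim E^{<n_k})) = \clsp\{\psi_{(\mu,k)}\}$, and each $\psi_{(\mu,k)} = T_{r(\mu)}\psi_{(\mu,k)}T^*_{r(\mu)}$ belongs to $D$. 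Since $C^*(T,Q,\psi)$ is generated by $\{T_e\}$, $\{Q_v\}$ and $\psi(C_0(\varprojlim E^{<n_k}))$, this yields $D = C^*(T,Q,\psi)$.

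I do not expect a serious obstacle here --- this is essentially a bookkeeping lemma. The only steps that need any care are the level-matching in the second paragraph (so that the multiplication formula, which is stated at a fixed level $k$, can be applied to products of elements carrying different indices) and the Stone--Weierstrass density of $\lsp\{\chi_{Z(\mu,k)}\}$ in $C_0(\varprojlim E^{<n_k})$; everything else is a routine transcription of Lemma~\ref{lem:mpctn formula} and the Toeplitz--Cuntz--Krieger relations.
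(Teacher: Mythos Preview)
Your proposal is correct and follows essentially the same route as the paper: reduce the multiplication formula to Lemma~\ref{lem:mpctn formula} by observing that $(T,Q,\psi_{(\cdot,k)})$ is a Toeplitz $n_k$-representation, then prove the spanning statement by refining to a common level before multiplying and checking that the generators lie in the span. The only cosmetic difference is that you invoke Stone--Weierstrass for the density of $\lsp\{\chi_{Z(\mu,k)}\}$, whereas the paper simply uses that these characteristic functions already span a dense $*$-subalgebra by the inverse-limit description $C_0(\varprojlim E^{<n_k}) \cong \varinjlim C_0(E^{<n_k})$; either justification works.
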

\begin{proof}
The first statement follows from the observation that each $(T, Q, \psi_{(\cdot, k)})$ is
a Toeplitz $n_k$-representation, and Lemma~\ref{lem:mpctn formula}. For the second
statement, first observe that the set on the right-hand side contains each $T_\alpha =
\sum_{\mu \in s(\alpha)E^{<n_1}} T_\alpha \psi_{(\mu, 1)} T^*_{s(\alpha)}$, each $Q_v =
\sum_{\mu \in vE^{<n_1}} T_v \psi_{(\mu, 1)} T^*_{v}$ and each $\psi_{(\mu,k)} =
T_{r(\mu)} \psi_{(\mu,k)} T^*_{r(\mu)}$. It is clearly closed under adjoints. So it
suffices to show that it is closed under multiplication. To see this, we consider a
product $T_\alpha \psi_{(\mu,k)} T^*_\beta T_\gamma \psi_{(\nu,l)} T^*_\delta$. Suppose
that $k \ge l$ (the case where $k < l$ will follow by taking adjoints). Then $Z(\nu,l) =
\bigsqcup_{\lambda \in E^{<n_k}, [\lambda]_{n_l} = \nu} Z(\lambda, k)$, and so we have
\[
T_\alpha \psi_{(\mu,k)} T^*_\beta T_\gamma \psi_{(\nu,l)} T^*_\delta
    = \sum_{\lambda \in E^{<n_k}, [\lambda]_{n_l} = \nu}
        T_\alpha \psi_{(\mu,k)} T^*_\beta T_\gamma \psi_{(\lambda,k)} T^*_\delta,
\]
and this belongs to $\clsp\{T_\alpha \psi_{(\mu,k)} T^*_\beta : k \ge 1, \mu \in
E^{<n_k}, \alpha,\beta \in E^*r(\mu)\}$ by the first statement.
\end{proof}

\begin{thm}\label{thm:omega universal}
Let $E$ be a row-finite directed graph with no sources, and let $\omega =
(n_k)^\infty_{k=1}$ be a sequence of nonzero natural numbers such that $n_k \mid n_{k+1}$
for all $k$. There is a Toeplitz $\omega$-representation $(t,q, \pi)$ of $E$ in
$\varinjlim \Tt(E, n_k)$ such that
\[
t_e = i_{n_1,\infty}(t_{n_1,e}),\qquad q_v = i_{n_1, \infty}(q_{n_1,v}),\qquad\text{ and }\quad
    \pi_{(\mu,k)} = i_{n_k,\infty}(\theta_{n_k, \mu})
\]
for all $e \in E^1$, all $v \in E^0$, and all $k \in \NN$ and $\mu \in E^{<n_k}$. This
Toeplitz $\omega$-representation is universal in the sense that if $(T, Q, \psi)$ is a
Toeplitz $\omega$-representation of $E$ in a $C^*$-algebra $B$, then there is a
homomorphism $\varphi_{T, Q, \psi} : \varinjlim \Tt(E, n_k) \to B$ such that
\[
\varphi_{T, Q, \psi}(t_e) = T_e,\qquad
    \varphi_{T, Q, \psi}(q_v) = Q_v,\quad\text{ and }\quad
    \varphi_{T, Q, \psi} \circ \pi = \psi.
\]
\end{thm}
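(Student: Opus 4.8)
\emph{Construction of $(t,q,\pi)$.} The plan is to reduce everything to the finite-level algebras $\Tt(E,n_k)$ and their universal property (Theorem~\ref{thm:TEn and CEn}), passing to inductive limits at the end. I would first recall that $C_0(\varprojlim E^{<n_k})$ is the inductive limit of the $C_0(E^{<n_k})$ along the maps $f\mapsto f\circ p_{n_{k+1},n_k}$ (which carry $\chi_{\{\mu\}}$ to $\sum_{[\nu]_{n_k}=\mu}\chi_{\{\nu\}}$), with the canonical homomorphism $C_0(E^{<n_k})\to C_0(\varprojlim E^{<n_k})$ carrying $\chi_{\{\mu\}}$ to $\chi_{Z(\mu,k)}$. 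For each $k$, the mutually orthogonal projections $\{\theta_{n_k,\mu}:\mu\in E^{<n_k}\}$ give a homomorphism $\sigma_k:C_0(E^{<n_k})\to\Tt(E,n_k)$ with $\sigma_k(\chi_{\{\mu\}})=\theta_{n_k,\mu}$, and the formula for $i_{n_k,n_{k+1}}$ in Proposition~\ref{prp:inclusions} shows $i_{n_k,n_{k+1}}\circ\sigma_k=\sigma_{k+1}\circ(f\mapsto f\circ p_{n_{k+1},n_k})$; so the $i_{n_k,\infty}\circ\sigma_k$ are compatible with the inductive system and assemble into $\pi:C_0(\varprojlim E^{<n_k})\to\varinjlim\Tt(E,n_k)$ with $\pi_{(\mu,k)}=i_{n_k,\infty}(\theta_{n_k,\mu})$. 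I would then put $t_e:=i_{n_1,\infty}(t_{n_1,e})$ and $q_v:=i_{n_1,\infty}(q_{n_1,v})$ — which equal $i_{n_k,\infty}(t_{n_k,e})$ and $i_{n_k,\infty}(q_{n_k,v})$ for every $k$, since the connecting maps fix the $t_{n,e}$ and the $q_{n,v}$ — and verify that $(t,q,\pi)$ is a Toeplitz $\omega$-representation, each defining relation of Definition~\ref{def:omega-rep} being the image under some $i_{n_k,\infty}$ of a relation among the generators $t_{n_k,e},q_{n_k,v},\theta_{n_k,\mu}$ of $\Tt(E,n_k)$; in particular the covariance relation for $\pi$ comes from~\eqref{eq:nrep rel}. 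Finally $i_{n_k,\infty}(\Tt(E,n_k))=C^*(\{t_e,q_v\}\cup\{\pi_{(\mu,k)}:\mu\in E^{<n_k}\})$, so $(t,q,\pi)$ generates $\varinjlim\Tt(E,n_k)$.

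\emph{Universality.} Given a Toeplitz $\omega$-representation $(T,Q,\psi)$ in $B$, for each $k$ I would check that $(T,Q,\{\psi_{(\mu,k)}:\mu\in E^{<n_k}\})$ is a Toeplitz $n_k$-representation of $E$: the $\psi_{(\mu,k)}$ are mutually orthogonal because the cylinders $Z(\mu,k)$, $\mu\in E^{<n_k}$, are pairwise disjoint, and the other relations are read off from those of Definition~\ref{def:omega-rep}. By Theorem~\ref{thm:TEn and CEn} there is then $\varphi_k:\Tt(E,n_k)\to B$ with $\varphi_k(t_{n_k,e})=T_e$, $\varphi_k(q_{n_k,v})=Q_v$ and $\varphi_k(\theta_{n_k,\mu})=\psi_{(\mu,k)}$. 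The crux is that the $\varphi_k$ are compatible with the connecting maps. To see that $\varphi_{k+1}\circ i_{n_k,n_{k+1}}=\varphi_k$ I would check on the generators of $\Tt(E,n_k)$: the claim is immediate for $t_{n_k,e}$ and $q_{n_k,v}$, and for $\theta_{n_k,\mu}$, Proposition~\ref{prp:inclusions} and the refinement $\chi_{Z(\mu,k)}=\sum_{\nu\in E^{<n_{k+1}},\,[\nu]_{n_k}=\mu}\chi_{Z(\nu,k+1)}$ give
\begin{align*}
\varphi_{k+1}\big(i_{n_k,n_{k+1}}(\theta_{n_k,\mu})\big)
    &= \sum_{\nu\in E^{<n_{k+1}},\,[\nu]_{n_k}=\mu}\psi_{(\nu,k+1)}
     = \psi\Big(\sum_{[\nu]_{n_k}=\mu}\chi_{Z(\nu,k+1)}\Big) \\
    &= \psi(\chi_{Z(\mu,k)}) = \varphi_k(\theta_{n_k,\mu}).
\end{align*}
The compatible family $(\varphi_k)$ then induces $\varphi_{T,Q,\psi}:=\varinjlim\varphi_k:\varinjlim\Tt(E,n_k)\to B$, and chasing definitions gives $\varphi_{T,Q,\psi}(t_e)=\varphi_1(t_{n_1,e})=T_e$, $\varphi_{T,Q,\psi}(q_v)=Q_v$, and $\varphi_{T,Q,\psi}(\pi_{(\mu,k)})=\varphi_k(\theta_{n_k,\mu})=\psi_{(\mu,k)}$; hence $\varphi_{T,Q,\psi}\circ\pi$ and $\psi$ agree on the dense $*$-subalgebra of $C_0(\varprojlim E^{<n_k})$ spanned by the $\chi_{Z(\mu,k)}$, so $\varphi_{T,Q,\psi}\circ\pi=\psi$. (This $\varphi_{T,Q,\psi}$ is unique, by the generation statement above.)

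\emph{Main obstacle.} The work should concentrate in two places: verifying that the level-$k$ data $(T,Q,\{\psi_{(\mu,k)}\})$ really satisfies all the defining relations of a Toeplitz $n_k$-representation — above all reconciling relation~\eqref{eq:nrep rel}, phrased via $E^{n_k}$ and the residue maps $[\cdot]_{n_k}$, with the covariance relation of Definition~\ref{def:omega-rep} — and the displayed compatibility computation, where the precise form of $i_{n_k,n_{k+1}}$ from Proposition~\ref{prp:inclusions} and the refinement relations among cylinder sets must both be handled carefully. By contrast, the identification $C_0(\varprojlim E^{<n_k})=\varinjlim C_0(E^{<n_k})$ and the final passage to the inductive limit are routine.
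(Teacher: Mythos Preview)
Your proposal is correct and follows essentially the same strategy as the paper: build $\pi$ from the inductive-limit description of $C_0(\varprojlim E^{<n_k})$, check the $\omega$-representation axioms by pushing forward the $n_k$-representation relations along $i_{n_k,\infty}$, and for universality observe that each $(T,Q,\{\psi_{(\mu,k)}\})$ is a Toeplitz $n_k$-representation, obtain $\varphi_k$ via Theorem~\ref{thm:TEn and CEn}, verify compatibility exactly as in your displayed computation, and pass to the direct limit. The one point the paper makes explicit that you elide is the harmless reduction to $n_1=1$: this is what makes $q_v=\pi(\chi_{Z(v,1)})$ the image of a genuine relation in $\Tt(E,n_1)$ (namely $q_{n_1,v}=\theta_{n_1,v}$, which fails if $n_1>1$), and dually what lets you deduce $Q_v=\sum_{\mu\in vE^{<n_k}}\psi_{(\mu,k)}$ from $Q_v=\psi(\chi_{Z(v,1)})$ in the universality direction.
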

\begin{proof}
We assume without loss of generality that $n_1 = 1$. The collection $(t_{n_1}, q_{n_1})$
is a Toeplitz--Cuntz--Krieger $E$-family and since $i_{n_1,\infty}$ is a homomorphism, it
follows that $t_e := i_{n_1,\infty}(t_{n_1,e})$ and $q_v = i_{n_1, \infty}(q_{n_1,v})$ is
a Toeplitz--Cuntz--Krieger $E$-family in $\varinjlim \Tt(E, n_k)$. For each $k$, the
formula
\begin{equation}\label{eq:pik}
\pi_k(\chi_{Z(\mu,k)}) := i_{n_k,\infty}(\theta_{n_k, \mu})
\end{equation}
gives a homomorphism $\pi_k : \lsp\{\chi_{Z(\mu,k)} : \mu \in E^{<n_k}\} \to \varinjlim
\Tt(E, n_k)$. So the universal property of $C_0(\varprojlim E^{<n_k}) \cong \varinjlim
C_0(E^{<n_k})$ yields a homomorphism $\pi : C_0(\varprojlim E^{<n_k}) \to \varinjlim
\Tt(E, n_k)$ satisfying $\pi_{(\mu,k)} = i_{n_k,\infty}(\theta_{n_k, \mu})$.

We check that $(t, q, \pi)$ is a Toeplitz $\omega$-representation. Since $n_1 = 1$, for
$w \in E^0$, we have $q_w = i_{n_1,\infty}(q_{n_1,w}) = i_{n_1,\infty}(\theta_{n_1,w}) =
\pi_{Z(w,1)}$. Take $e \in E^1$ and $\mu \in E^{<n_k}$. Then
\begin{align*}
t^*_e \pi_{(\mu,k)}
    &= i_{n_1,\infty}(t^*_{n_1,e}) i_{n_k,\infty}(\theta_{n_k,\mu})
     = i_{n_k,\infty}(i_{n_1,n_k}(t^*_{n_1,e}) \theta_{n_k,\mu}) \\
    &= i_{n_k,\infty}(t^*_{n_k,e} \theta_{n_k,\mu})
     = \begin{cases}
        i_{n_k,\infty}(\theta_{n_k,\mu'} t^*_{n_k,e}) &\text{ if $\mu = e\mu'$} \\
        i_{n_k,\infty}(\sum_{e\lambda \in E^{n_k}} \theta_{n_k,\lambda} t^*_{n_k,e}) &\text{ if $\mu = r(e)$} \\
        0 &\text{otherwise}
     \end{cases}\\
    &= \begin{cases}
        \pi_{(\mu',k)} t^*_e &\text{ if $\mu = e\mu'$} \\
        \sum_{e\lambda \in E^{n_k}} \pi_{(\lambda,k)} t^*_e &\text{ if $\mu = r(e)$} \\
        0 &\text{otherwise.}
    \end{cases}
\end{align*}
So $(t,q,\pi)$ is a Toeplitz $\omega$-representation of $E$ in $\varinjlim \Tt(E,n_k)$.

Let $(T, Q, \psi)$ be another $\omega$-representation of $E$ in $B$, and fix $k \in \NN$.
For $\mu \in E^{<n_k}$ let $\Theta_\mu := \psi_{(\mu,k)}$. Quick calculations show that
$(T, Q, \Theta)$ is a Toeplitz $n_k$-representation of $E$. The universal property of
$\Tt(E, n_k)$ gives a homomorphism $\varphi_{n_k, \infty} : \Tt(E, n_k) \to B$ satisfying
\[
\varphi_{n_k, \infty}(t_e) = T_e,\qquad
    \varphi_{n_k, \infty}(q_v) = Q_v,\quad\text{ and }\quad
    \varphi_{n_k, \infty}(\theta_{n_k, \mu}) = \psi_{(\mu,k)}.
\]
We check that $\varphi_{n_{k+1}, \infty} \circ i_{n_k, n_{k+1}} = \varphi_{n_k,\infty}$.
We have
\[
\varphi_{n_{k+1},\infty} \circ i_{n_k, n_{k+1}}(t_{n_k, e}) =
\varphi_{n_{k+1},\infty}(t_{n_{k+1}, e}) = T_e = \varphi_{n_k, \infty}(t_{n_k, e}),
\]
and similarly $\varphi_{n_{k+1}} \circ i_{n_k, n_{k+1}}(q_{n_k,v}) = Q_v =
\varphi_{n_k}(q_{n_k,v})$. For $\mu \in E^{<n_k}$,
\begin{align*}
\varphi_{n_{k+1}, \infty}(i_{n_k, n_{k+1}}(\theta_{n_k, \mu}))
    &= \varphi_{n_k, \infty}\Big(\sum_{\lambda \in E^{<n_{k+1}}, [\lambda]_{n_k} = \mu} \theta_{n_{k+1}, \lambda}\Big)\\
    &= \psi\Big(\sum_{\lambda \in E^{<n_{k+1}}, [\lambda]_{n_k} = \mu} \chi_{Z(\lambda,n_{k+1})}\Big)
    = \psi(\chi_{Z(\mu,k)}) = \varphi_{n_k, \infty}(\theta_{n_k, \mu}).
\end{align*}
The universal property of $\varinjlim \Tt(E, n_k)$ now gives a homomorphism $\varphi_{T,
Q, \psi}$ making the diagram
\[\begin{tikzpicture}[xscale=2]
    \node (k) at (0,3) {$\Tt(E, n_k)$};
    \node (k+1) at (4,3) {$\Tt(E, n_{k+1})$};
    \node (lim) at (2,1.5) {$\varinjlim \Tt(E, n_k)$};
    \node (B) at (2,-0.5) {$B$};
    \draw[-latex] (k)--(k+1) node[pos=0.5, above]{$i_{n_k, n_{k+1}}$};
    \draw[-latex] (k)--(lim) node[pos=0.75, above]{$i_{n_k, \infty}$};
    \draw[-latex] (k+1)--(lim) node[pos=0.75, above]{$i_{n_{k+1}, \infty}$};
    \draw[-latex] (k)--(B) node[pos=0.5, inner sep=0pt, anchor=north east]{$\varphi_{n_k,\infty}$};
    \draw[-latex] (k+1)--(B) node[pos=0.5, inner sep=0pt, anchor=north west]{$\varphi_{n_{k+1},\infty}$};
    \draw[-latex] (lim)--(B) node[pos=0.25, inner sep=0.5pt, right]{$\varphi_{T,Q,\psi}$};
\end{tikzpicture}\]
commute, and this homomorphism has the desired properties.
\end{proof}

Given $E$ and $\omega$ as in Theorem~\ref{thm:omega universal}, we write $\Tt(E, \omega)$
for the universal $C^*$-algebra generated by a Toeplitz $\omega$-representation of $E$.
Since the universal $C^*$-algebra for a given set of generators and relations is unique
up to canonical isomorphism, we can and will identify $\Tt(E, \omega)$ with $\varinjlim
\Tt(E(n_k))$ via the homomorphism of Theorem~\ref{thm:omega universal}.

The following theorem follows from the same argument as Theorem~\ref{thm:omega
universal}.

\begin{thm}\label{thm:universal omegarep}
Let $E$ be a row-finite directed graph with no sources, and let $\omega =
(n_k)^\infty_{k=1}$ be a sequence of nonzero natural numbers such that $n_k \mid n_{k+1}$
for all $k$. There is an $\omega$-representation $(s, p, \rho)$ of $E$ in $\varinjlim
C^*(E, n_k)$ such that
\[
s_e = i_{n_1,\infty}(s_{n_1,e}),\qquad p_v = i_{n_1, \infty}(p_{n_1,v}),\qquad\text{ and }\quad
    \rho_{(\mu,k)} = i_{n_k,\infty}(\varepsilon_{n_k, \mu})
\]
for all $e \in E^1$, all $v \in E^0$, and all $k \in \NN$ and $\mu \in E^{<n_k}$. This
$\omega$-representation is universal in the sense that if $(S, P, \psi)$ is an
$\omega$-representation of $E$ in a $C^*$-algebra $B$, then there is a homomorphism
$\varphi_{S, P, \psi} : \varinjlim C^*(E, n_k) \to B$ such that
\[
\varphi_{S, P, \psi}(s_e) = S_e,\qquad
    \varphi_{S, P, \psi}(p_v) = P_v,\quad\text{ and }\quad
    \varphi_{S, P, \psi} \circ \rho = \psi.
\]
\end{thm}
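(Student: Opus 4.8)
The plan is to mimic the proof of Theorem~\ref{thm:omega universal} essentially verbatim, with the Toeplitz data replaced by Cuntz--Krieger data throughout. First I would observe that, by the final assertion of Theorem~\ref{thm:TEn and CEn}, the connecting maps $i_{n_k, n_{k+1}}$ descend to injections $\tilde i_{n_k, n_{k+1}} : C^*(E, n_k) \to C^*(E, n_{k+1})$ (this is exactly the last line of Proposition~\ref{prp:inclusions}), so the direct limit $\varinjlim C^*(E, n_k)$ makes sense and carries canonical inclusions $i_{n_k, \infty}$. Setting $n_1 = 1$ without loss of generality, I would then define $s_e := i_{n_1, \infty}(s_{n_1, e})$, $p_v := i_{n_1, \infty}(p_{n_1, v})$, and, for each $k$, a homomorphism $\rho_k : \lsp\{\chi_{Z(\mu, k)} : \mu \in E^{<n_k}\} \to \varinjlim C^*(E, n_k)$ by $\rho_k(\chi_{Z(\mu, k)}) := i_{n_k, \infty}(\varepsilon_{n_k, \mu})$; compatibility of the $\rho_k$ with the inclusions $\lsp\{\chi_{Z(\mu,k)}\} \hookrightarrow \lsp\{\chi_{Z(\nu, k+1)}\}$ coming from $\chi_{Z(\mu,k)} = \sum_{[\lambda]_{n_k} = \mu} \chi_{Z(\lambda, k+1)}$ follows from the formula for $i_{n_k, n_{k+1}}(\varepsilon_{n_k, \mu})$ in Proposition~\ref{prp:inclusions}, so the universal property of $C_0(\varprojlim E^{<n_k}) \cong \varinjlim C_0(E^{<n_k})$ yields a single homomorphism $\rho$ with $\rho_{(\mu,k)} = i_{n_k, \infty}(\varepsilon_{n_k, \mu})$.

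Next I would verify that $(s, p, \rho)$ is a genuine $\omega$-representation. That $(s,p)$ is a Cuntz--Krieger $E$-family is immediate since $(s_{n_1}, p_{n_1})$ is the image in $C^*(E, n_1) = C^*(E)$ of a Cuntz--Krieger family — more precisely, one checks, just as in Theorem~\ref{thm:TEn and CEn}, that the defining relations of a Cuntz--Krieger $n_1$-representation with $n_1 = 1$ force $(s_{n_1,e}, p_{n_1,v})$ to be a Cuntz--Krieger $E$-family. The covariance relation $s^*_e \rho_{(\mu,k)} = \cdots$ and the relation $p_w = \rho_{(w,1)}$ are verified by pushing everything into a single $C^*(E, n_k)$ via $i_{n_k, \infty}$ and applying the relation~\eqref{eq:nrep rel} defining a Cuntz--Krieger $n_k$-representation — this is the same chain of equalities displayed in the proof of Theorem~\ref{thm:omega universal}, with $t$, $q$, $\theta$, $\pi$ replaced by $s$, $p$, $\varepsilon$, $\rho$.

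For universality, given an $\omega$-representation $(S, P, \psi)$ of $E$ in $B$, I would fix $k$ and set $\varepsilon'_\mu := \psi_{(\mu,k)}$; then $(S, P, \varepsilon')$ is a Cuntz--Krieger $n_k$-representation of $E$ (the extra Cuntz--Krieger relation for $(S,P)$ is part of the hypothesis on $(S,P,\psi)$), so the universal property of $C^*(E, n_k)$ from Theorem~\ref{thm:TEn and CEn} gives $\varphi_{n_k, \infty} : C^*(E, n_k) \to B$ with $\varphi_{n_k, \infty}(s_{n_k, e}) = S_e$, $\varphi_{n_k, \infty}(p_{n_k, v}) = P_v$, $\varphi_{n_k, \infty}(\varepsilon_{n_k, \mu}) = \psi_{(\mu,k)}$. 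The compatibility $\varphi_{n_{k+1}, \infty} \circ \tilde i_{n_k, n_{k+1}} = \varphi_{n_k, \infty}$ is checked on generators exactly as in Theorem~\ref{thm:omega universal} (the only nontrivial case being the $\varepsilon$'s, where it reduces to $\psi(\sum_{[\lambda]_{n_k} = \mu} \chi_{Z(\lambda, k+1)}) = \psi(\chi_{Z(\mu,k)})$), and the universal property of the direct limit then produces $\varphi_{S, P, \psi}$ with the required properties.

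I do not anticipate a genuine obstacle: the argument is a line-by-line transcription of the proof of Theorem~\ref{thm:omega universal}, and the paper's phrase ``follows from the same argument'' is accurate. The only points deserving a word of care — rather than difficulty — are that the direct system $\{C^*(E, n_k), \tilde i_{n_k, n_{k+1}}\}$ is legitimate (guaranteed by Proposition~\ref{prp:inclusions}) and that the Cuntz--Krieger relation for $(S,P)$ in the hypothesis is precisely what is needed to promote each $(S, P, \psi_{(\cdot, k)})$ from a Toeplitz $n_k$-representation to a Cuntz--Krieger $n_k$-representation so that Theorem~\ref{thm:TEn and CEn}'s factorisation through $C^*(E, n_k)$ applies.
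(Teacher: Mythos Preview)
Your proposal is correct and is precisely the approach the paper takes: the paper's entire proof is the single sentence that the theorem ``follows from the same argument as Theorem~\ref{thm:omega universal},'' and your line-by-line transcription with Toeplitz data replaced by Cuntz--Krieger data is exactly that argument.
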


We write $C^*(E, \omega)$ for the universal $C^*$-algebra generated by an
$\omega$-representation of $E$, and we identify it with $\varinjlim C^*(E(n))$ via the
homomorphism of the preceding theorem.

Kribs and Solel regard $\varinjlim C^*(E, n_k)$ as a generalised Bunce--Deddens algebra.
Since the Bunce--Deddens algebra $B_\omega$ is completely determined by the supernatural
number $\omega$, we expect $C^*(E, \omega)$ to depend only on $E$ and the supernatural
number associated to $\omega$. We give an elementary proof that this is the case using
the presentation given in Theorem~\ref{thm:universal omegarep}. For this, recall that for
sequences $\omega = (n_k)^\infty_{k=1}$ with $n_k \mid n_{k+1}$ for all $k$, and $\omega'
= (m_l)^\infty_{l=1}$ with $m_l \mid m_{l+1}$ for all $l$, we write $\omega \mid \omega'$
if for every $k \ge 1$ there exits $j(k) \ge 1$ such that $n_k \mid m_{j(k)}$. The
supernatural number $[\omega]$ associated to $\omega$ is the collection $[\omega] :=
\{\omega' : \omega \mid \omega'\text{ and }\omega' \mid \omega\}$.

\begin{prp}\label{prp:equiv omegas}
Let $E$ be a row-finite directed graph with no sources. Let $\omega = (n_k)^\infty_{k=1}$
and $\omega' = (m_j)^\infty_{j=1}$ be sequences of nonzero natural numbers such that $n_k
\mid n_{k+1}$ for all $k$ and $m_j \mid m_{j+1}$ for all $j$. If $\omega \mid \omega'$,
then there is an injective homomorphism $\varphi_{\omega,\omega'} : \Tt(E, \omega) \to
\Tt(E, \omega')$ such that
\begin{equation}\label{eq:omega->omega'}
\varphi_{\omega, \omega'} \circ i_{n_k, \infty}
    = i_{m_{j(k)}, \infty} \circ i_{n_k, m_{j(k)}}\quad
        \text{ for all $k \ge 1$ and any $j(k)$ such that $n_k \mid m_{j(k)}$.}
\end{equation}
Moreover, $\varphi_{\omega, \omega'}$ descends to a homomorphism $\tilde\varphi_{\omega,
\omega'} : C^*(E, \omega) \to C^*(E, \omega')$. If $[\omega] = [\omega']$ then
$\varphi_{\omega, \omega'} : \Tt(E, \omega) \to \Tt(E, \omega')$, and
$\tilde\varphi_{\omega,\omega'} : C^*(E, \omega) \to C^*(E, \omega')$ are isomorphisms.
\end{prp}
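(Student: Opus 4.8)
The plan is to build $\varphi_{\omega,\omega'}$ directly from the universal property of $\Tt(E,\omega)$ established in Theorem~\ref{thm:omega universal}, by exhibiting an appropriate Toeplitz $\omega$-representation of $E$ inside $\Tt(E,\omega')$. Concretely, let $(t',q',\pi')$ be the universal Toeplitz $\omega'$-representation of $E$ in $\Tt(E,\omega')$. Set $T_e := t'_e$ and $Q_v := q'_v$. For the homomorphism out of $C_0(\varprojlim E^{<n_k})$, I would use the canonical surjection $p_{\omega',\omega} \colon \varprojlim E^{<m_j} \to \varprojlim E^{<n_k}$ induced by the maps $p_{m_j,n_k}\colon E^{<m_j}\to E^{<n_k}$ (defined whenever $n_k\mid m_j$); this is a continuous proper surjection of locally compact Hausdorff spaces, so composition gives a homomorphism $\psi := {(p_{\omega',\omega})}^*\colon C_0(\varprojlim E^{<n_k})\to C_0(\varprojlim E^{<m_j})$, and then $\psi' := \pi'\circ\psi$ maps into $\Tt(E,\omega')$. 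The key computation is that $\psi'(\chi_{Z(\mu,k)}) = \sum_{\lambda\in E^{<m_{j(k)}},\,[\lambda]_{n_k}=\mu}\pi'_{(\lambda,j(k))}$ for any $j(k)$ with $n_k\mid m_{j(k)}$ — i.e. the preimage of a cylinder set is a finite disjoint union of cylinder sets — and from this the two commutation relations in Definition~\ref{def:omega-rep} for $(T,Q,\psi')$ follow from those for $(t',q',\pi')$ by a routine but slightly fiddly index manipulation (the "otherwise" case and the $\mu=r(e)$ case both need the residue bookkeeping to be checked carefully). Granting that $(T,Q,\psi')$ is a Toeplitz $\omega$-representation, Theorem~\ref{thm:omega universal} produces $\varphi_{\omega,\omega'} := \varphi_{T,Q,\psi'}\colon\Tt(E,\omega)\to\Tt(E,\omega')$, and unwinding the definitions against the identification $\Tt(E,\omega)=\varinjlim\Tt(E,n_k)$ gives exactly the compatibility relation~\eqref{eq:omega->omega'}.

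For injectivity, I would exploit the uniqueness theorem for the Toeplitz extension rather than arguing directly with the direct limit. The gauge action on each $\Tt(E,n_k)$ (coming from the gauge action on $\Tt C^*(E(n_k))$) is compatible with the connecting maps $i_{n_k,n_{k+1}}$, hence assembles into a gauge action on $\Tt(E,\omega)$, and likewise on $\Tt(E,\omega')$; one checks $\varphi_{\omega,\omega'}$ is equivariant for these. Then Proposition~\ref{prp:coburn} (the Coburn-type uniqueness theorem for $\Tt(E,\omega)$ cited in the introduction) applies: it suffices to verify that $\varphi_{\omega,\omega'}$ is nonzero on each of a distinguishing family of projections. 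The natural candidates are the gap projections $\pi_{(\mu,k)} - \sum_{(e,\tau)\in\mu E(n_k)^1}\cdots$ transported from $\Tt(E,n_k)$; their images under $\varphi_{\omega,\omega'}$ are the corresponding (nonempty finite) sums of gap projections at level $m_{j(k)}$ in $\Tt(E,\omega')$, which are nonzero by Theorem~4.1 of \cite{FowlerRaeburn:IUMJ99} exactly as in the proof of Proposition~\ref{prp:inclusions}. Alternatively, and perhaps more cleanly, one notes that $i_{n_k,\infty}$ in $\Tt(E,\omega')$ factors as $\varphi_{\omega,\omega'}\circ i_{n_k,\infty}$ on the nose by~\eqref{eq:omega->omega'}, each $i_{n_k,\infty}\colon\Tt(E,n_k)\to\Tt(E,\omega')$ is injective (composition of the injective connecting maps of Proposition~\ref{prp:inclusions} — injectivity of $\varinjlim$ into the limit of injective maps), so $\varphi_{\omega,\omega'}$ is injective on each $i_{n_k,\infty}(\Tt(E,n_k))$, and since these exhaust a dense subalgebra of $\Tt(E,\omega)$, a standard direct-limit argument (an injective homomorphism on an increasing dense family of subalgebras, all of whose images are themselves closed, together with the norm-nonincreasing property on the union) gives injectivity of $\varphi_{\omega,\omega'}$. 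I expect this second route to be the shorter one.

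The descent to $C^*(E,\omega)$ is immediate: since $\psi'$ is built from $\pi'$, the triple $(T,Q,\psi')$ is a Cuntz--Krieger $\omega$-representation whenever $(t',q',\pi')$ is one, i.e.\ in the quotient, so $\varphi_{\omega,\omega'}$ carries the Cuntz--Krieger relation to the Cuntz--Krieger relation and hence factors through $\tilde\varphi_{\omega,\omega'}\colon C^*(E,\omega)\to C^*(E,\omega')$; equivalently one applies Theorem~\ref{thm:universal omegarep} in place of Theorem~\ref{thm:omega universal}. Injectivity of $\tilde\varphi_{\omega,\omega'}$ follows the same way, now using the gauge-invariant uniqueness theorem for $C^*(E(n_k))$ (as in the last line of the proof of Proposition~\ref{prp:inclusions}) together with the observation that $\tilde\varphi_{\omega,\omega'}$ is gauge-equivariant and injective on the diagonal $\rho(C_0(\varprojlim E^{<n_k}))$ — the latter because $p_{\omega',\omega}$ is surjective, so ${(p_{\omega',\omega})}^*$ is injective on $C_0$. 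Finally, for the case $[\omega]=[\omega']$: then also $\omega'\mid\omega$, so the construction yields $\varphi_{\omega',\omega}\colon\Tt(E,\omega')\to\Tt(E,\omega)$, and one checks $\varphi_{\omega',\omega}\circ\varphi_{\omega,\omega'}=\id$ and $\varphi_{\omega,\omega'}\circ\varphi_{\omega',\omega}=\id$ by verifying the identity on the generators $t_e$, $q_v$ and on the dense family $\{\pi_{(\mu,k)}\}$; the composite of the two cylinder-preimage formulas collapses to the identity because $p_{\omega,\omega}=\id$ on $\varprojlim E^{<n_k}$ up to the canonical identification, which is where one uses that the index functions $j(k)$ can be chosen coherently. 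The same computation on the quotient gives that $\tilde\varphi_{\omega,\omega'}$ is an isomorphism. The main obstacle is the first step — verifying the covariance relations of Definition~\ref{def:omega-rep} for $(T,Q,\psi')$, where the residue/cylinder-preimage bookkeeping has to be handled with care across the two different moduli $n_k$ and $m_{j(k)}$; everything after that is formal manipulation of universal properties and appeals to the already-established uniqueness theorems.
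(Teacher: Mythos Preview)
Your proposal is correct, but it takes a different route from the paper. The paper works directly with the direct-limit description $\Tt(E,\omega) = \varinjlim \Tt(E,n_k)$: for each $k$ it forms the composite $i_{m_{j(k)},\infty}\circ i_{n_k,m_{j(k)}} : \Tt(E,n_k)\to \Tt(E,\omega')$ using the connecting maps of Proposition~\ref{prp:inclusions}, checks that these are compatible with the bonding maps $i_{n_k,n_{k+1}}$ by a short chain of equalities, and then invokes the universal property of the direct limit. Your approach instead builds a Toeplitz $\omega$-representation $(T,Q,\psi')$ in $\Tt(E,\omega')$ via the pullback along the projective-limit surjection $p_{\omega',\omega}$, and then invokes Theorem~\ref{thm:omega universal}. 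Both constructions unwind to the same homomorphism, but the paper's is shorter because the covariance bookkeeping you flag as the ``main obstacle'' has effectively already been packaged into Proposition~\ref{prp:inclusions}, whereas you are redoing a version of that verification at the level of $\omega$-representations. On the other hand, your route makes transparent exactly why the construction is natural in $\omega$ (it is functoriality of $C_0(\varprojlim-)$), and your second injectivity argument---that $\varphi_{\omega,\omega'}\circ i_{n_k,\infty}$ is injective for each $k$, hence $\varphi_{\omega,\omega'}$ is isometric on a dense union---is precisely what the paper leaves implicit (the paper's proof does not mention injectivity at all). Your remark that $\tilde\varphi_{\omega,\omega'}$ is itself injective goes beyond what the proposition asserts, but it is correct and follows as you say.
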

\begin{proof}
Fix natural numbers $j(k)$ such that $n_k \mid m_{j(k)}$ for all $k$. Then $i_{m_{j(k)},
\infty} \circ i_{n_k, m_{j(k)}} : \Tt(E, n_k) \to \varinjlim \Tt(E, m_k)$ is a
homomorphism for each $k$. Since
\begin{align*}
i_{m_{j(k+1)}, \infty} \circ i_{n_{k+1}, m_{j(k+1)}} \circ i_{n_k, n_{k+1}}
    &= i_{m_{j(k+1)}, \infty} \circ i_{n_k, m_{j(k+1)}}\\
    &= i_{m_{j(k+1)}, \infty} \circ i_{m_{j(k)}, m_{j(k+1)}} \circ i_{n_k, m_{j(k)}}
     = i_{m_{j(k)}, \infty} \circ i_{n_k, m_{j(k)}},
\end{align*}
The universal property of $\varinjlim \Tt(E, n_k)$ gives a homomorphism $\varphi$ that
satisfies~\eqref{eq:omega->omega'}. The same argument shows that $\varphi$ descends to a
homomorphism $\tilde\varphi : \varinjlim C^*(E, n_k) \to \varinjlim C^*(E, m_l)$. Now
suppose that $\omega' \mid \omega$ as well. The preceding paragraph gives a homomorphism
$\gamma : \Tt(E, \omega') \to \Tt(E, \omega)$ such that $\gamma \circ i_{m_j, \infty} =
i_{n_{k(j)}, \infty} \circ i_{m_j, n_{k(j)}}$ for all $j$, and which descends to
$\tilde\gamma : C^*(E, \omega') \to C^*(E, \omega)$. It is routine to check that $\gamma
\circ \phi$ is the identity map on each $i_{n_k, \infty} \Tt(E, n_k)$ and symmetrically,
$\phi \circ \gamma$ is the identity on each $i_{m_j}(\Tt(E, m_j))$, so continuity shows
that $\phi$ and $\gamma$ are mutually inverse; the same argument shows that
$\tilde{\varphi}$ and $\tilde{\gamma}$ are mutually inverse.
\end{proof}

\section{The topological graph \texorpdfstring{$E(\infty)$}{E(infty)}}\label{sec:topgraph}

Kribs and Solel construct a topological graph $E(\infty)$ from a graph $E$ and a
supernatural number $\omega$. They show in \cite[Theorem~6.3]{KribsSolel:JAMS07} that
$C^*(E, \omega)$ is isomorphic to the $C^*$-algebra $C^*(E(\infty))$ of this topological
graph in the sense of Katsura \cite{KatsuraI}. Unfortunately, their statement does not
give explicit details about the isomorphism, and we shall need these in the sequel. In
this section, we give a slightly different description of the topological graph
$E(\infty)$, and use it to present the details of the isomorphism $C^*(E,\omega) \cong
C^*(E(\infty))$. For the most part, we are just making explicit some of the details of
the proofs of results in \cite{KribsSolel:JAMS07} and \cite{KatsuraII}, so we keep our
presentation short.

First recall that a topological graph $F$ consists of second-countable locally compact
Hausdorff spaces $F^0$ and $F^1$ and maps $r, s : F^1 \to F^0$ such that $r$ is
continuous and $s$ is a homeomorphism. Katsura \cite{KatsuraI} associates to each
topological graph $F$ a $C^*$-algebra that we denote $C^*(F)$. This $C^*(F)$ is generated
by a homomorphism $t_F^0 : C_0(F^0) \to C^*(F)$ and a linear map $t_F^1 : C_c(F^1) \to
C^*(F)$ satisfying relations reminiscent of the Cuntz--Krieger relations for graph
algebras (for a description that avoids the machinery of Hilbert modules, see
\cite{LiPaskSims:NYJM14}). The pair $(t_F^0, t_F^1)$ is called a Cuntz--Krieger $E$-pair.
When $F^0$ and $F^1$ are discrete and countable, $C^*(F)$ coincides with the usual graph
$C^*$-algebra described in Section~\ref{sec:background}.

Now let $E$ be a row-finite directed graph with no sources, and take a sequence $\omega =
(n_k)^\infty_{k=1}$ of nonzero positive integers such that $n_k \mid n_{k+1}$ for all
$k$. Suppose that $n_k \to \infty$ as $k \to \infty$. Let $X_i = \{w \in E^*: 0 \leq |w|
< n_i, |w| \equiv 0 \, (\operatorname{mod}\,n_{i-1}) \}$, let $X = \Pi_{i=1}^\infty X_i$
and let $Y = \{y \in X: s(y_k) = r(y_{k+1}) \}$. For each $e \in E^1$, let $D_e = \{y \in Y:
r(y_1) = s(e) \}$ and $R_e = \{y \in Y: \text{ for some } l \leq \infty, y_i = r(e)
\text{ for all } i < l \text{ and (if $l \not = \infty$) } y_l = e y' \text{ for some }
|y'| \equiv -1 \, (\operatorname{mod}\,n_{i-1}) \}$. For $y \in D_e$, write $i(y)$ for
the smallest positive integer such that $|y_i| < n_i - n_{i-1}$ or $i(y) = \infty$ if
$|y_i| = n_i - n_{i-1}$ for every $i$. If $i(y) < \infty$, write $\sigma_e(y) = u$, where
\[
    u_i = \begin{cases} r(e) & \text{ if } i < i(y) \\
        e y_1 \dots y_{i(y)} & \text{ if } i = i(y) \\
        y_i & \text{ if } i > i(y).
    \end{cases}
\]
If $i(y) = \infty$, set $\sigma_e(y) = (r(e), r(e) \dots)$.

Kribs and Solel construct a topological graph $E(\infty)$ with $E(\infty)^0 = Y$,
$E(\infty)^1 = \{(e, y) \in E^1 \times Y: y \in D_e \}$, $s_{E(\infty)}(e, y) = y$ and
$r_{E(\infty)}(e, y) = \sigma_e(y)$. Here we give another presentation of $E(\infty)$
which is more natural within our framework.

\begin{lem}\label{lem:EisoF}
Let $E$ be a row-finite directed graph with no sources, and take a sequence $\omega =
(n_k)^\infty_{k=1}$ of nonzero positive integers such that $n_k \mid n_{k+1}$ for all
$k$. Suppose that $n_k \to \infty$ as $k \to \infty$. Define $F_{E, \omega}^0 =
\varprojlim E^{<n_k}$, $F_{E, \omega}^1 = \{(e,x) \in E^1 \times \varprojlim E^{<n_k}:
r(x_1) = s(e) \}$, $s_F(e,x) = x$, and $r_F(e,x)_k = r_{n_k}(e,x_k) = [e x_k]_{n_k}$.
Then $\phi = (\phi^0,\phi^1): E(\infty) \to F$ defined  by $\phi^0(y)_i = y_1 y_2 \dots
y_i$ for $y \in E(\infty)^0$ and $\phi^1(e, y)_i = (e, \phi^0(y))$ for $(e, y) \in
E(\infty)^1$ is an isomorphism of topological graphs.
\end{lem}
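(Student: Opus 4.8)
The plan is to check separately that $\phi^0\colon E(\infty)^0\to F_{E,\omega}^0$ and $\phi^1\colon E(\infty)^1\to F_{E,\omega}^1$ are homeomorphisms and that $\phi$ intertwines the source and range maps; only the last point involves any real computation. First I would verify that $\phi^0$ is a well-defined bijection. Given $y\in E(\infty)^0=Y$, each $y_i$ satisfies $|y_i|\equiv 0\pmod{n_{i-1}}$ and $|y_i|\le n_i-n_{i-1}$ (with the convention $n_0:=1$), so the telescoping bound $|y_1\cdots y_i|=\sum_{j\le i}|y_j|\le\sum_{j\le i}(n_j-n_{j-1})=n_i-1$ shows $\phi^0(y)_i=y_1\cdots y_i\in E^{<n_i}$, and $|y_{i+1}|\in n_i\NN$ gives $[\phi^0(y)_{i+1}]_{n_i}=y_1\cdots y_i$, so $\phi^0(y)\in\varprojlim E^{<n_k}$. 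Conversely, $x=(x_k)\in\varprojlim E^{<n_k}$ is sent back to the sequence $y$ with $y_1=x_1$ and, for $i\ge 2$, $y_i$ the unique path with $x_i=x_{i-1}y_i$; this is legitimate because $x_{i-1}=[x_i]_{n_{i-1}}$, and one checks $|y_i|\in n_{i-1}\NN$, $|y_i|<n_i$, and $s(y_{i-1})=r(y_i)$, so the resulting sequence lies in $Y$. These two maps are clearly mutually inverse, and both are continuous because $Y$ and $\varprojlim E^{<n_k}$ carry the subspace topologies from products of discrete spaces and each coordinate of $\phi^0$ (respectively of its inverse) depends on only finitely many coordinates of the argument; hence $\phi^0$ is a homeomorphism.

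Next I would observe that $(e,y)\in E(\infty)^1$ iff $y\in D_e$ iff $r(y_1)=s(e)$ iff $r(\phi^0(y)_1)=s(e)$ iff $(e,\phi^0(y))\in F_{E,\omega}^1$, so $\phi^1$ is just the restriction of $\id_{E^1}\times\phi^0$ and is therefore a homeomorphism onto $F_{E,\omega}^1$. The source maps then match on the nose: $s_F(\phi^1(e,y))=\phi^0(y)=\phi^0(s_{E(\infty)}(e,y))$.

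The substance of the proof is the range identity $r_F(\phi^1(e,y))=\phi^0(\sigma_e(y))$, that is, $[e\,y_1\cdots y_k]_{n_k}=(\sigma_e(y))_1\cdots(\sigma_e(y))_k$ for all $k$. I would prove this by cases on $k$ versus $i(y)$, using the key observation that $|y_j|<n_j-n_{j-1}$ fails precisely when $|y_j|$ takes its maximal value $n_j-n_{j-1}$, so that $|y_j|=n_j-n_{j-1}$ for every $j<i(y)$ and hence $|y_1\cdots y_m|=n_m-1$ for every $m<i(y)$. For $k<i(y)$ this gives $|e\,y_1\cdots y_k|=n_k$, so $[e\,y_1\cdots y_k]_{n_k}=r(e)$, which agrees with $(\sigma_e(y))_1\cdots(\sigma_e(y))_k=r(e)$ (a concatenation of copies of the vertex $r(e)$). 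For $k=i(y)<\infty$ one computes $|e\,y_1\cdots y_{i(y)}|=n_{i(y)-1}+|y_{i(y)}|<n_{i(y)}$, so $[e\,y_1\cdots y_{i(y)}]_{n_{i(y)}}=e\,y_1\cdots y_{i(y)}=(\sigma_e(y))_1\cdots(\sigma_e(y))_{i(y)}$; and for $k>i(y)$ the analogous count gives $|e\,y_1\cdots y_k|\le n_k-n_{i(y)-1}<n_k$, so again the residue leaves $e\,y_1\cdots y_k$ unchanged, matching $(\sigma_e(y))_1\cdots(\sigma_e(y))_k=e\,y_1\cdots y_k$. When $i(y)=\infty$ the first case applies for all $k$ and yields $\phi^0(\sigma_e(y))=(r(e),r(e),\dots)$, as required. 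Together with the previous paragraph this shows $r_F\circ\phi^1=\phi^0\circ r_{E(\infty)}$ and $s_F\circ\phi^1=\phi^0\circ s_{E(\infty)}$, so $\phi$ is an isomorphism of topological graphs.

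The only real obstacle I anticipate is bookkeeping: tracking path lengths modulo the $n_k$ cleanly through the case split around $i(y)$, and handling the degenerate situations in which some $y_i$ (or $x_i$) is a vertex rather than an edge-path, where one falls back on the conventions $r(v)=s(v)=v$.
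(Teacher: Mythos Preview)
Your proposal is correct and follows essentially the same route as the paper: construct the explicit inverse $x\mapsto(y_i)$ with $x_i=x_{i-1}y_i$, verify that $\phi^0$ and its inverse are mutually inverse homeomorphisms via the cylinder-set bases, observe that $\phi^1$ is $\id_{E^1}\times\phi^0$ restricted appropriately, and check source and range compatibility. The only difference is that the paper simply asserts $\phi^0(r_{E(\infty)}(e,y))_i=[e\,y_1\cdots y_i]_{n_i}$ in one line, whereas you supply the full case analysis on $k$ versus $i(y)$ (with the length bounds $|y_1\cdots y_m|=n_m-1$ for $m<i(y)$ and $|e\,y_1\cdots y_k|\le n_k-n_{i(y)-1}$ for $k\ge i(y)$) that actually justifies it; this is a genuine addition of detail rather than a different argument.
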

\begin{proof}
We abbreviate $F^i := F_{E,\omega}^i$, $i = 0,1$. Define $\psi = (\psi^0,\psi^1): F \to
Y$ by $\psi^0(x)_1 = x_1$ and $[x_{i+1}]_{n_i} \psi^0(x)_{i+1} = x_{i+1}$ for all $i \geq
1$, and $\psi^1(e, x) = (e, \psi^0(x))$. We have $\psi^0(\phi^0(y))_i = \psi^0((y_1 \dots
y_j)_{j=1}^\infty)_i$. Since $|y_1 \dots y_{i-1}| = \sum_{j=1}^{i-1} |y_j| < n_{i-1}$
and $|y_i| \in n_{i-1} \mathbb{N}$, we have $[y_1 \dots y_i]_{n_{i-1}} = y_1 \dots y_{i-1}$ and
hence $\psi^0(\phi^0(y))_i = y_i$. Conversely, $\phi^0(\psi^0(x))_i = \psi^0(x)_1 \dots
\psi^0(x)_i = x_1 \psi^0(x)_1 \dots \psi^0(x)_i = x_2 \psi^0(x_3) = \dots = x_i$.
Therefore $\psi^0$ is an inverse for $\phi^0$.

The basic open sets in $Y$ are given by $Z_Y(w_1, \dots, w_k) = \{ y \in Y: y_i = w_i
\text{ for } 1 \leq i \leq k \}$, where $w_i \in X_i$ and $s(w_i) = r(w_{i+1})$.

We calculate \begin{align*} \phi^0(Z_Y(y_1, \dots, y_k)) &= \{x \in F^0: x_i = y_1 \dots y_i \text{ for all } i \leq k \} \\
&= \{x \in F^0: x_k = y_1 \dots y_k \} = Z(y_1 \dots y_k, k).  \end{align*} So $\psi^0$
is continuous.

Conversely, for $\mu \in E^{<n_k}$, express $\mu = y_1 \dots y_k$, where $y_1 =
[\mu]_{n_1}$ and $[\mu]_{n_i} y_{i+1} = [y]_{n_{i+1}}$. Then $\psi^0(Z(\mu, k)) =
Z_Y(y_1, \dots, y_k)$. So $\phi^0$ is continuous. Therefore $\phi^0$ is a homeomorphism of
$E(\infty)^0$ onto $F^0$. It then follows immediately that $\phi^1: E(\infty)^1 \to F^1$ is also
a homeomorphism.

We have $\phi^0(s_{E(\infty)}(e, y)) = s_F(e, \phi^0(y)) = s_F(\phi^1(e,y))$. We also
have $\phi^0(r_{E(\infty)}(e, y))_i = [e y_1 \dots y_i]_{n_i}$, so
$\phi^0(r_{E(\infty)}(e, y)) = r_F(e, \phi^0(y)) = r_F(\phi^1(e,y))$. Therefore $\phi$ is
an isomorphism of topological graphs.
\end{proof}

We now analyse connectivity in the topological graph $F$ when $E$ is
finite and strongly connected.

We need to recall some facts from Perron-Frobenius theory for finite strongly connected
graphs. Recall (for example from \cite[Section~6]{LacaLarsenEtAl:xx14} with $k=1$) that
the \emph{period} $\Pp_E$ of a strongly connected directed graph $E$ is given by $\Pp_E =
\gcd\{|\mu| : \mu \in E^*, r(\mu) = s(\mu)\}$. The group $\Pp_E\ZZ$ is then equal to the
subgroup generated by $\{|\mu| : \mu \in v E^* v\}$ for any vertex $v$ of $E$, and so is
equal to $\{|\mu| - |\nu| : \mu,\nu \in v E^* v\}$ for any $v$.

\begin{lem}\label{lem:Cn}
Let $E$ be a strongly connected finite graph with no sources, and take $n \in \NN$. There
is a map $C_n : E^0 \times E^0 \to \ZZ/\gcd(\Pp_E, n)\ZZ$ such that $C_n(r(\lambda),
s(\lambda)) = |\lambda| + \gcd(\Pp_E, n)\ZZ$ for all $\lambda \in E^*$. There is also an
equivalence relation $\sim_n$ on $E^0$ such that $v \sim_n w$ if and only if $C_n(v,w) =
0$.
\end{lem}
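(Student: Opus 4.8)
The plan is to build the map $C_n$ directly from the combinatorial structure of paths, using strong connectedness to see that it is well-defined. First I would fix a base vertex $v_0 \in E^0$. For each $v \in E^0$, strong connectedness lets me choose a path $\lambda_v \in v_0 E^* v$; I then propose to define $C_n(r(\lambda), s(\lambda))$ so that $C_n$ restricted to pairs reachable from a common vertex agrees with path-length modulo $d := \gcd(\Pp_E, n)$. The clean way to package this is: for any two vertices $u, w$ with $uE^*w \neq \emptyset$, declare $C_n(u,w) := |\mu| + d\ZZ$ for any $\mu \in uE^*w$; since $E$ is strongly connected every pair has such a path, so this defines $C_n$ on all of $E^0 \times E^0$ as soon as we know it is independent of the choice of $\mu$.

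The key step — and the main obstacle — is well-definedness: if $\mu, \mu' \in uE^*w$ then $|\mu| \equiv |\mu'| \pmod{d}$. Here I would use the facts recalled just before the lemma. Pick any $\nu \in wE^*u$ (strong connectedness). Then $\mu\nu$ and $\mu'\nu$ both lie in $uE^*u$, so $|\mu\nu|, |\mu'\nu| \in \Pp_E\ZZ$, whence $|\mu| - |\mu'| = |\mu\nu| - |\mu'\nu| \in \Pp_E\ZZ \subseteq d\ZZ$. This is exactly the content of the remark that $\Pp_E\ZZ = \{|\mu| - |\nu| : \mu,\nu \in vE^*v\}$. So $C_n$ is well-defined, and the defining property $C_n(r(\lambda), s(\lambda)) = |\lambda| + d\ZZ$ holds by construction. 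I should also note, if needed downstream, the cocycle identity $C_n(u,w) = C_n(u,x) + C_n(x,w)$ whenever all three pairs are connected, which is immediate from concatenating paths; but the statement as given only asks for the single displayed property, so I may leave this as a remark.

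Finally, for the equivalence relation: define $v \sim_n w$ iff $C_n(v,w) = 0$ in $\ZZ/d\ZZ$. Reflexivity is clear since $C_n(v,v) = |v| + d\ZZ = 0$ (taking $\lambda = v$ the trivial path). Symmetry follows because a path $\mu \in vE^*w$ and a path $\nu \in wE^*v$ satisfy $|\mu| + |\nu| \equiv 0 \pmod{d}$ (as $\mu\nu \in vE^*v$ has length in $\Pp_E\ZZ$), so $C_n(w,v) = -C_n(v,w)$; hence $C_n(v,w) = 0$ iff $C_n(w,v) = 0$. Transitivity follows from the cocycle identity: if $C_n(u,v) = 0$ and $C_n(v,w) = 0$ then $C_n(u,w) = C_n(u,v) + C_n(v,w) = 0$. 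I expect the whole argument to be short, with essentially all the real work in the one-line Perron–Frobenius observation that differences of return-path lengths generate $\Pp_E\ZZ$.
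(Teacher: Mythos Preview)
Your proposal is correct and follows essentially the same route as the paper: the well-definedness argument (pick a return path $\nu \in wE^*u$ and observe $|\mu|-|\mu'| = |\mu\nu|-|\mu'\nu| \in \Pp_E\ZZ \subseteq d\ZZ$) is identical, and your treatment of the equivalence relation via the cocycle identity $C_n(u,w)=C_n(u,x)+C_n(x,w)$ and the antisymmetry $C_n(w,v)=-C_n(v,w)$ is a slightly slicker repackaging of the paper's concrete verification of reflexivity, symmetry, and transitivity. The initial mention of a base vertex $v_0$ is unnecessary and can be dropped, as you yourself effectively do.
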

\begin{proof}
Fix $v,w \in E^0$ and $\mu,\nu \in v E^* w$. Since $E$ is strongly connected, there is a
path $\lambda \in w E^* v$, and then $\mu\lambda, \nu\lambda \in vE^* v$. Hence $|\mu| -
|\nu| = |\mu\lambda| - |\nu\lambda| \in \Pp_E\ZZ \subseteq \gcd(\Pp_E, n)\ZZ$. So there
is a well-defined function $C_n : \{(v,w) \in E^0 \times E^0 : v E^* w \not= \emptyset\}
\to \ZZ/\gcd(\Pp_E, n)\ZZ$ such that $C_n(r(\lambda), s(\lambda)) = |\lambda| +
\gcd(\Pp_E, n)\ZZ$ for all $\lambda$. Since $E$ is strongly connected, the domain of
$C_n$ is all of $E^0 \times E^0$ as claimed.

Define a relation $\sim_n$ on $E^0$ by $v\sim_n w$ if $C_n(v,w) = 0$. We show that
$\sim_n$ is an equivalence relation. We clearly have $C_n(v,v) = 0$ for all $v$, so
$\sim_n$ is reflexive. To see that it is symmetric, suppose that $C_n(v,w) = 0$. Then
there exists $\lambda \in vE^* w$ with $|\lambda| \in \gcd(\Pp_E, n)\ZZ$. Since $E$ is
strongly connected, there exists $\mu \in w E^* v$, and then $\lambda\mu \in v E^* v$.
Hence $|\lambda\mu| \in \Pp_E\ZZ$. Now $|\mu| = |\lambda\mu| - |\lambda| \in \Pp_E \ZZ
\subseteq \gcd(\Pp_E, n)\ZZ$, and so $C_n(w,v) = 0$ as well. For transitivity,
suppose that $C_n(u,v) = 0$ and $C_n(v,w) = 0$. Then there exist $\mu \in uE^* v$ and
$\nu \in v E^* w$ with $|\mu|,|\nu| \in \gcd(\Pp_E, n)\ZZ$. So $\mu\nu \in uE^* w$
satisfies $|\mu\nu| = |\mu| + |\nu| \in \gcd(\Pp_E, n)\ZZ$, and hence $C_n(u,w) = 0$ too.
\end{proof}

\begin{prp}\label{prp:components}
Let $E$ be a strongly connected finite directed graph with no sources. For $n \in \NN$,
the connected components of $E(n)$ are the sets $E(n)^0_\Lambda := \{\mu \in E^{<n} :
s(\mu) \in \Lambda\}$ indexed by $\Lambda \in E^0/{\sim_n}$. These connected components
are all strongly-connected: if $\mu,\nu \in E(n)^0_\Lambda$, then $\mu E(n)^* \nu \not=
\emptyset$. In particular, $E(n)$ is strongly connected if and only if $\gcd(\Pp_E, n) =
1$.
\end{prp}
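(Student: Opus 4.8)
The plan is to prove three facts about $E(n)$ and assemble them; throughout I use the identification of $E(n)^*$ with $\{(\lambda,\kappa):\lambda\in E^*,\ \kappa\in s(\lambda)E^{<n}\}$, under which $s_n(\lambda,\kappa)=\kappa$ and $r_n(\lambda,\kappa)=[\lambda\kappa]_n$. The first fact is the vertex case: for $v,w\in E^0\subseteq E^{<n}$, $vE(n)^*w\neq\emptyset$ if and only if $v\sim_n w$. By \eqref{eq:connectivity in E(n)} the left-hand side says $vE^{jn}w\neq\emptyset$ for some $j\in\NN$; since $\gcd(\Pp_E,n)\mid jn$, any such path witnesses $C_n(v,w)=0$, giving one implication. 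For the converse the key point is elementary: the loop-lengths $S:=\{|\rho|:\rho\in uE^*u\}$ at a vertex $u$ form a submonoid of $\NN$ that (as recalled before Lemma~\ref{lem:Cn}) generates $\Pp_E\ZZ$, so their image $\bar S$ in $\ZZ/n\ZZ$, being a nonempty submonoid of a finite group, is a subgroup; hence $\bar S=\langle\bar S\rangle$, and $\langle\bar S\rangle$ is the image of $\langle S\rangle=\Pp_E\ZZ$, i.e.\ $(\Pp_E\ZZ+n\ZZ)/n\ZZ=\gcd(\Pp_E,n)\ZZ/n\ZZ$. Thus loops at $u$ realise precisely the residues mod $n$ that lie in $\gcd(\Pp_E,n)\ZZ$. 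Given $v\sim_n w$, choose $\lambda\in vE^*w$ with $|\lambda|\in\gcd(\Pp_E,n)\ZZ$ and a loop $\rho$ at $w$ with $|\rho|\equiv-|\lambda|\pmod n$; then $\lambda\rho\in vE^{jn}w$ with $j=(|\lambda|+|\rho|)/n$.

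The second fact is that for each $\mu\in E^{<n}$, both $\mu E(n)^*s(\mu)$ and $s(\mu)E(n)^*\mu$ are nonempty, $s(\mu)$ being regarded as a vertex of $E(n)$. For the first, $(\mu,s(\mu))$ works, since $s_n(\mu,s(\mu))=s(\mu)$ and $r_n(\mu,s(\mu))=[\mu]_n=\mu$. The second is trivial when $|\mu|=0$; otherwise a path $(\lambda,\mu)$ with $r_n(\lambda,\mu)=s(\mu)$ requires $\lambda\in s(\mu)E^*r(\mu)$ nonempty with $|\lambda|+|\mu|\equiv0\pmod n$, and such a $\lambda$ is produced thus: fix a nonempty $\lambda_1\in s(\mu)E^*r(\mu)$ (which exists since $E$ is finite, strongly connected and has no sources, so every vertex lies on a nonempty loop); then $\lambda_1\mu$ is a loop at $s(\mu)$, so $|\lambda_1|+|\mu|\in\Pp_E\ZZ$, and the observation above gives a loop $\rho$ at $s(\mu)$ with $|\rho|\equiv-(|\lambda_1|+|\mu|)\pmod n$, whence $\lambda:=\rho\lambda_1$ works. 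The third fact is that the endpoints $\mu=s_n(e,\mu)$ and $r_n(e,\mu)$ of any edge $(e,\mu)$ of $E(n)$ have $\sim_n$-equivalent sources in $E$: when $|\mu|<n-1$ we have $r_n(e,\mu)=e\mu$ with $s(e\mu)=s(\mu)$, so the sources coincide; when $|\mu|=n-1$ we have $r_n(e,\mu)=r(e)$ with $e\mu\in r(e)E^ns(\mu)$, so $C_n(r(e),s(\mu))=n+\gcd(\Pp_E,n)\ZZ=0$, giving $s(r(e))=r(e)\sim_n s(\mu)$.

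To assemble: the sets $E(n)^0_\Lambda$, $\Lambda\in E^0/{\sim_n}$, are nonempty and partition $E(n)^0=E^{<n}$, and by the third fact no edge of $E(n)$ joins distinct ones. For $\mu,\nu\in E(n)^0_\Lambda$ we have $s(\mu)\sim_n s(\nu)$, so $s(\mu)E(n)^*s(\nu)\neq\emptyset$ by the first fact; concatenating this with the two paths from the second fact gives $\mu E(n)^*\nu\neq\emptyset$, so $E(n)^0_\Lambda$ is strongly connected and hence contained in a single connected component. Therefore the connected components of $E(n)$ are exactly the $E(n)^0_\Lambda$, each strongly connected. In particular $E(n)$ is strongly connected iff there is a single $\sim_n$-class; since $C_n(r(e),s(e))=1+\gcd(\Pp_E,n)\ZZ$ for every edge $e$ of $E$ (and $E$ has an edge), this holds iff $\gcd(\Pp_E,n)=1$. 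I expect the bookkeeping in the second fact to be the main obstacle---one must track the source/range conventions for $(\lambda,\kappa)$ and the order of concatenation, and ensure $\lambda$ is genuinely nonempty when $r(\mu)=s(\mu)$---while the submonoid observation in the first fact is the only Perron--Frobenius input and is routine.
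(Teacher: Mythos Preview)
Your proof is correct and follows essentially the same architecture as the paper's: show that no edge of $E(n)$ joins distinct $E(n)^0_\Lambda$ (your Fact~3; the paper's second direction), and that each $E(n)^0_\Lambda$ is strongly connected by passing through $E^0$ (your Facts~1 and~2; the paper's first direction, where the path is assembled as $(\mu,s(\mu))\cdot\tilde\lambda\cdot(\alpha,\nu)$). The one substantive difference is how you produce a path in $vE^{jn}w$ when $v\sim_n w$. The paper does this by an explicit construction: it picks cycles $\eta,\zeta$ at $w$ with $|\eta|-|\zeta|=\Pp_E$, forms $\beta=(\eta\zeta^{n-1})^k$ with $|\beta|\equiv\gcd(\Pp_E,n)\pmod n$, and then pads an initial path $\lambda$ by a suitable power $\beta^l$. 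You replace this with the observation that the loop lengths at a vertex form a submonoid of $\NN$ generating $\Pp_E\ZZ$, so their residues modulo $n$ form a submonoid of the finite group $\ZZ/n\ZZ$, hence a subgroup, necessarily $\gcd(\Pp_E,n)\ZZ/n\ZZ$; this immediately supplies a loop of any required residue. Your argument is shorter and more conceptual, and also makes your Fact~2 (the construction of a path in $s(\mu)E(n)^*\mu$) drop out with no extra work; the paper's version bundles this step into the choice of the auxiliary path $\alpha$. Either way the Perron--Frobenius input is the same: only the identification of $\Pp_E\ZZ$ with the group generated by loop lengths at a single vertex is used.
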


Recall that for $\lambda = \lambda_1 \dots \lambda_l \in E^*$ and $\mu \in E^{<n}$ with
$s(\lambda) = r(\mu)$, we write $(\lambda,\mu)$ for the corresponding path $(\lambda_1,
[\lambda_2\dots\lambda_l\mu]_n)(\lambda_2,[\lambda_3\dots\lambda_l\mu]_n) \dots
(\lambda_l,\mu) \in [\lambda\mu]_n E(n)^l \mu$. In particular, if $\lambda \in E^l$, then
$(\lambda, s(\lambda)) \in E(n)^l$.

We write $\approx_E$ for the smallest equivalence relation on $E^0$ such that $r(e)
\approx_E s(e)$ for all $e \in E^1$. We call the equivalence classes of $\approx_E$ the
connected components of $E$.

\begin{proof}[Proof of Proposition~\ref{prp:components}]
Since $\mu E(n)^* \nu \not= \emptyset$ implies $\mu \approx_{E(n)} \nu$, it suffices to
show that if $s(\mu) \sim_n s(\nu)$ then $\mu E(n)^* \nu \not= \emptyset$, and that if
$\mu \approx_{E(n)} \nu$, then $s(\mu) \sim_n s(\nu)$.

First suppose that $s(\mu) \sim_n s(\nu)$. Since $E$ has no sources and is strongly
connected, it has no sinks, so we can choose $\alpha = \alpha_1 \dots \alpha_k \in E^*
r(\nu)$ such that $|\alpha\nu| \in n\NN$. It follows that $C(r(\alpha), s(\nu)) = 0$ and
so $s(\mu) \sim_n r(\alpha)$. Let $v := s(\mu)$ and $w := r(\alpha)$. Since $v \sim_n w$,
we have $|\lambda| + \gcd(\Pp_E, n)\ZZ = C_n(v,w) = 0$, so $|\lambda| \in \gcd(\Pp_E,
n)\ZZ$. Choose $k$ such that $k \Pp_E \equiv \gcd(\Pp_E, n)\;(\operatorname{mod}\,n)$.
Since $E$ is strongly connected, we have $\Pp_E \ZZ = \{|\eta| - |\zeta| : \eta,\zeta \in
wE^* w\}$. So there are cycles $\eta, \zeta \in wE^*w$ such that $|\eta| - |\zeta| =
\Pp_E$. In particular, $|\eta\zeta^{n-1}| = |\eta| - |\zeta| + |\zeta^n| = \Pp_E +
n|\zeta| \equiv \Pp_E\;(\operatorname{mod}\,n)$. Hence $\beta := (\eta\zeta^{n-1})^k \in
w E^* w$ satisfies $|\beta| \equiv k\Pp_E\;(\operatorname{mod}\,n) \equiv \gcd(\Pp_E,
n)\;(\operatorname{mod}\,n)$. Choose $q \in \NN$ such that $qn \ge |\lambda|$. Since
$|\lambda|$ is divisible by $n$, the number $l := \frac{qn - |\lambda|}{\gcd(\Pp_E, n)}$
is an integer. Now $|\lambda \beta^l| \in v E^{jn} w$ for some $j$.
So~\eqref{eq:connectivity in E(n)} gives a path $\tilde\lambda \in v E(n)^* w$. Now
$(\mu, \nu) \tilde\lambda (\alpha,\nu) \in \mu E(n)\nu$ as required.

Now suppose that $\mu \approx_{E(n)} \nu$. Since $(\mu,s(\mu)) \in \mu E(n)^* s(\mu)$
and likewise for $\nu$, and since
$\approx_{E(n)}^0$ is an equivalence relation, we have $s(\mu) \approx_{E(n)} s(\nu)$. So
it suffices to show that $v \approx_{E(n)} w$ implies $v \sim_n w$ for $v,w \in E^0$. By
definition of $\approx_{E(n)}$ it then suffices, by induction, to show that if $v E(n)^*
w \not= \emptyset$, say $(\lambda, w) \in v E(n)^* w$, then $v \sim_n w$.
By~\eqref{eq:connectivity in E(n)} we have $\lambda \in v E^{jn} w$ for some $j$. In
particular, $C(v,w) = |\lambda| + \gcd(\Pp_E, n)\ZZ = 0 + \gcd(\Pp_E, n)\ZZ$ and so $v
\sim_n w$.
\end{proof}

Given a sequence $\omega = (n_k)^\infty_{k=1}$ of natural numbers with $n_k \mid n_{k+1}$
for all $k$, and given $p \in \NN$, the sequence $\gcd(p, n_k)$ is nondecreasing and
bounded above by $p$, so it is eventually constant. We write $\gcd(p, \omega)$ for its
eventual value.

\begin{lem}\label{lem:invariantsubset}
Let $E$ be a strongly connected finite directed graph with no sources, and take a
sequence $\omega = (n_k)^\infty_{k=1}$ of nonzero positive integers such that $n_k \mid
n_{k+1}$ for all $k$. Fix $k$ with $\gcd(\Pp_E, n_k) = \gcd(\Pp_E, \omega)$.  For each
equivalence class $\Lambda \in E^0 / \sim_{n_k}$, let $X_\Lambda = \bigcup_{\mu \in
E^{<n_k}, s(\mu) \in \Lambda} Z(\mu, k)$. The $X_\Lambda$ are mutually disjoint and cover
$F^0 = \varprojlim E^{<n_k}$. Each $X_\Lambda$ is invariant in the sense of
\cite[Definition~2.1]{KatsuraIII}, and the $X_\Lambda$ are the minimal nonempty closed
invariant subsets of $F^0$.
\end{lem}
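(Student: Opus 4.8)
\textbf{Proof proposal for Lemma~\ref{lem:invariantsubset}.}

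The plan is to establish the four assertions in turn, leaning on the description of $F^0 = \varprojlim E^{<n_k}$ as a subspace of $\prod_k E^{<n_k}$ and on the combinatorial machinery of Lemma~\ref{lem:Cn} and Proposition~\ref{prp:components}. First I would check that the $X_\Lambda$ are mutually disjoint and cover $F^0$: a point $x = (x_j)_j \in F^0$ has a well-defined coordinate $x_k \in E^{<n_k}$, and since $\Pp_E$-residue $C_{n_k}(r(x_k), s(x_k))$ only depends on the $\sim_{n_k}$-class of $s(x_k)$, the point $x$ lies in $X_\Lambda$ precisely for the unique $\Lambda$ containing $s(x_k)$; disjointness and covering are then immediate, and since each $X_\Lambda$ is a finite union of compact open cylinder sets $Z(\mu,k)$, it is clopen. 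A small point to verify here is that the decomposition does not depend on which $k$ with $\gcd(\Pp_E,n_k)=\gcd(\Pp_E,\omega)$ we pick; this follows because for $k' \ge k$ the connecting map $p_{n_{k'},n_k}$ sends $\mu$ to $[\mu]_{n_k}$, and $s([\mu]_{n_k})$ differs from $s(\mu)$ by a path whose length is a multiple of $n_k$, hence divisible by $\gcd(\Pp_E,n_k)=\gcd(\Pp_E,\omega)$, so the $\sim_{n_k}$-classes match up.

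Next I would prove invariance. Recalling Katsura's definition, a closed subset $V \subseteq F^0$ is invariant if $s_F(r_F^{-1}(V)) \subseteq V$ and, dually, $r_F$ maps $s_F^{-1}(V)$ into $V$ (together with the condition on sources, which is vacuous here since $E$, hence $F$, has no sources). Concretely, for an edge $(e,x) \in F^1$ we have $s_F(e,x) = x$ and $r_F(e,x)_k = [e x_k]_{n_k}$, so $s(r_F(e,x)_k)$ equals $s(x_k)$ when $|e x_k| = 1 + |x_k|$ is not a multiple of $n_k$, and equals $r(e) = r(x_k)$'s predecessor otherwise --- in either case $C_{n_k}(r(r_F(e,x)_k), s(r_F(e,x)_k))$ and $C_{n_k}(r(x_k),s(x_k))$ differ by $1 \bmod \gcd(\Pp_E,n_k)$, consistently at both source and range. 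Thus $x$ and $r_F(e,x)$ lie in $X_\Lambda$ and $X_{\Lambda'}$ respectively with $\Lambda,\Lambda'$ related by a fixed shift on $E^0/\sim_{n_k}$; but the relevant invariance only requires that $X_\Lambda$ be a union of range-images of its own incoming edges and source-images of its own outgoing edges, and one checks directly that $r_F^{-1}(X_\Lambda)$ has source-image contained in $X_\Lambda$ precisely because the shift is well-defined --- I would spell this out carefully, as it is the step most prone to an off-by-one confusion between source and range.

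Finally, for minimality: I claim each $X_\Lambda$ contains no proper nonempty closed invariant subset. Suppose $V \subseteq X_\Lambda$ is closed, invariant, and nonempty; pick $x \in V$. Given any cylinder $Z(\nu,k) \subseteq X_\Lambda$, i.e. any $\nu \in E^{<n_k}$ with $s(\nu) \in \Lambda$, I would use Proposition~\ref{prp:components} --- which says the component $E(n_k)^0_\Lambda$ of $E(n_k)$ containing $x_k$ is strongly connected --- to find a path in $E(n_k)^*$ from $x_k$ back through $\nu$, and lift this along the tower (using that $n_k \mid n_{k+1}$ and that each $E(n_{k+1})$ likewise has its strongly connected component over $\Lambda$) to produce an infinite path in $F$ whose source is a point of $Z(\nu,k)$ and which is reachable from $x$ via $r_F, s_F^{-1}$ moves; invariance then forces $Z(\nu,k) \cap V \neq \emptyset$. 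Since the $Z(\nu,k)$ with $s(\nu) \in \Lambda$ form a basis for $X_\Lambda$ and $V$ is closed, density of $V$ gives $V = X_\Lambda$. The main obstacle is this last lifting argument: making precise how a finite path in $E(n_k)$ extends compatibly to finite paths in every $E(n_{k'})$, $k' > k$, so that the limit genuinely lands in $F^0$ and has the prescribed first $k$ coordinates --- this is where the identification $E(n)^* \cong \{(\mu,\nu) : \mu \in E^*, \nu \in s(\mu)E^{<n}\}$ and the connectivity criterion~\eqref{eq:connectivity in E(n)} do the real work, and I would present it via the observation that a long enough path in $E^{jn_{k'}}$ based appropriately projects down consistently.
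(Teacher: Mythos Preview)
Your disjointness/covering argument is fine, and the observation about independence of the choice of $k$ is a nice addition. The genuine gap is in the invariance step. You correctly compute $s(r_F(e,x)_k)$ in the two cases, but then you track the wrong invariant: membership of $x$ in $X_\Lambda$ is determined by the $\sim_{n_k}$-class of the \emph{source vertex} $s(x_k)$, not by $C_{n_k}(r(x_k),s(x_k)) = |x_k| + \gcd(\Pp_E,n_k)\ZZ$. What must be checked is whether $s([ex_k]_{n_k}) \sim_{n_k} s(x_k)$. When $|ex_k|<n_k$ this is immediate since $s(ex_k)=s(x_k)$; when $|ex_k|=n_k$ one has $[ex_k]_{n_k}=r(e)$, and the path $ex_k \in r(e)E^{n_k}s(x_k)$ gives $C_{n_k}(r(e),s(x_k)) = n_k + \gcd(\Pp_E,n_k)\ZZ = 0$, hence $r(e)\sim_{n_k} s(x_k)$. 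So there is \emph{no} shift: $x$ and $r_F(e,x)$ always lie in the same $X_\Lambda$, and invariance follows at once (this is exactly what the paper does). Your conclusion that $\Lambda$ and $\Lambda'$ differ by a fixed shift is incorrect, and had it been true it would contradict invariance rather than establish it; the sentence ``one checks directly that $r_F^{-1}(X_\Lambda)$ has source-image contained in $X_\Lambda$ precisely because the shift is well-defined'' is a hand-wave over a wrong computation.

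Your minimality argument is salvageable but overcomplicated. Once strong connectedness of $E(n_k)^0_\Lambda$ (Proposition~\ref{prp:components}) produces a path $\lambda = (\lambda_1,[\lambda_2\cdots\lambda_i x_k]_{n_k})\cdots(\lambda_i,x_k)$ in $E(n_k)^*$ from $x_k$ to $y_k$, no tower-lifting is needed: set $\mu_i := (\lambda_i,x) \in F^1$ and inductively $\mu_j := (\lambda_j, r_F(\mu_{j+1}))$ to obtain a finite path $\mu \in F^*$ with $s_F(\mu)=x$ and $r_F(\mu)_k = r_{n_k}(\lambda) = y_k$, so $r_F(\mu) \in Z(y_k,k)$. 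The point $x$ already lives in the projective limit, and edges of $F$ are simply pairs $(e,z)$ with $z \in F^0$, so the ``main obstacle'' you identify does not arise.
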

\begin{proof}
Take $\Lambda, \Lambda' \in E^0 / \sim_{n_k}$ with $\Lambda \not = \Lambda'$. Since $x \in
X_\Lambda$ if and only if $s(x_k) \in \Lambda$, it is clear that $X_\Lambda$ and
$X_{\Lambda'}$ are mutually orthogonal.

To see that each $X_\Lambda$ is invariant, let $\mu \in E^{<n_k}$  and $e \in E^1
r(\mu)$. Then
\[
r_{n_k}(e,\mu) = [e \mu]_{n_{k}} =
    \begin{cases}
        e \mu & \text{ if $|e \mu | < n_{k}$} \\
        r(e) & \text{ if $|e \mu| = n_{k}$},
    \end{cases}
\]
so $s(r_{n_{k}}(e, \mu)) \in \Lambda$ if and only if $s(\mu) \in \Lambda$. Now, take
$(e,x) \in F^1$. We have $s(x_k) \in \Lambda$ if and only if $s(r_{n_k}(e,x_k)) =
s([e x_k]_{n_k}) \in \Lambda$. So $x \in X_\Lambda$ if and only if $r_F(e,x) = [e
x_k]_{n_k} \in X_\Lambda$.

For the final assertion, fix $x = (x_k)_{k=1}^\infty$ and $y = (y_k)_{k=1}^\infty$ in a
given $X_\Lambda$. It suffices to show that for every $k \in \NN$, there exists $\mu_k
\in F^*$ such that $s_F(\mu_k) =x$ and $r_F(\mu_k) \in Z(y_k,k)$. Fix $k$ such that
$\gcd(\Pp_E, n_k) = \gcd(\Pp_E, \omega)$. Proposition~\ref{prp:components} implies that
the component $E(n_k)_\Lambda^0$ is strongly connected. So there exists $\lambda \in
E(n_k)^*$ such that $s_{n_k}(\lambda) = x_k$ and $r_{n_k}(\lambda) = y_k$. Say $\lambda =
(\lambda_1, [\lambda_2 \dots \lambda_i x_k]_{n_k}) \dots (\lambda_{i-1}, [\lambda_i
x_k]_{n_k})(\lambda_i, x_k).$  Define $\mu_i := (\lambda_i,x) \in F^1$ and inductively
let $\mu_j = (\lambda_j, r_F(\mu_{j+1})) \in F^1$ for $1 \leq j \leq i-1$. Then $\mu =
\mu_1 \dots \mu_i \in F^i$ and $s_F(\mu) = s_F(\mu_i) = x$. By construction, $(\mu_j)_k =
(\lambda_j, [\lambda_{j+1}x_k]_{n_k})$ for each $1 \leq j \leq i-1$, so $r_F(\mu)_k =
r_F(\mu_1)_k = r_{n_k}(\lambda_1, [\lambda_2 \dots \lambda_i x_k]_{n_k}) =
r_{n_k}(\lambda) = y_k$, so $r_F(\mu) \in Z(y_k, k)$.
\end{proof}

\section{Uniqueness theorems and simplicity}\label{sec:uniqueness}

In this section we prove uniqueness theorems for $\Tt(E, \omega)$ and $C^*(E, \omega)$.
Interestingly, in contrast to the uniqueness theorems for directed graph algebras,
no gauge-invariance hypothesis or aperiodicity hypotheses are needed in
the uniqueness theorem for $C^*(E, \omega)$ provided that $n_k \to \infty$. To obtain our
uniqueness theorem for $C^*(E,\omega)$ we appeal to Katsura's theory of topological
graphs and their $C^*$-algebras using the construction of the preceding section.

Our first uniqueness theorem is for $\Tt(E, \omega)$, and follows relatively easily from
Fowler and Raeburn's uniqueness theorem \cite[Theorem~4.1]{FowlerRaeburn:IUMJ99} for
Toeplitz algebras of Hilbert bimodules.

\begin{prp}\label{prp:coburn}
Let $E$ be a row-finite directed graph with no sources, and take a sequence $\omega =
(n_k)^\infty_{k=1}$ of nonzero positive integers such that $n_k \mid n_{k+1}$ for all
$k$. Let $(T, Q, \psi)$ be an $\omega$-representation of $E$ in a $C^*$-algebra $A$. Then
the induced homomorphism $\pi_{T, Q, \psi} : \Tt(E, \omega) \to A$ is injective if and
only if
\begin{equation}\label{eq:Toeplitz injectivity}
\Big(Q_{r(\mu)} - \sum_{e \in r(\mu)E^1} T_e T^*_e\Big)\psi_{(\mu,k)} \not= 0
\end{equation}
for all $k \in \NN$ and $\mu \in E^{<n_k}$.
\end{prp}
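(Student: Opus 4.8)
The plan is to exploit the direct-limit description $\Tt(E,\omega) = \varinjlim \Tt(E, n_k)$ and reduce the claim to the uniqueness theorem for the Toeplitz algebras of the graphs $E(n_k)$. For each $k$, put $\Theta^{(k)}_\mu := \psi_{(\mu,k)}$ for $\mu \in E^{<n_k}$; then, as in the proof of Theorem~\ref{thm:omega universal}, $(T, Q, \Theta^{(k)})$ is a Toeplitz $n_k$-representation, and the homomorphism $\varphi_{n_k,\infty} := \pi_{T,Q,\Theta^{(k)}} \colon \Tt(E,n_k) \to A$ of Theorem~\ref{thm:TEn and CEn} satisfies $\pi_{T,Q,\psi} \circ i_{n_k,\infty} = \varphi_{n_k,\infty}$. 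Since the connecting maps are injective (Proposition~\ref{prp:inclusions}), each $i_{n_k,\infty}$ is an isomorphism onto a $C^*$-subalgebra of $\Tt(E,\omega)$, these subalgebras are increasing, and their union is dense; as an injective homomorphism of $C^*$-algebras is isometric, $\pi_{T,Q,\psi}$ is injective if and only if it is isometric on each $i_{n_k,\infty}(\Tt(E,n_k))$, i.e.\ if and only if every $\varphi_{n_k,\infty}$ is injective.

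Next I would fix $k$ and recall that, under the identification of Theorem~\ref{thm:TEn and CEn}, $\varphi_{n_k,\infty}$ is the homomorphism induced by the Toeplitz--Cuntz--Krieger $E(n_k)$-family whose edge partial isometries are $T_e\psi_{(\mu,k)}$ and whose vertex projections are $\psi_{(\mu,k)}$. Applying Fowler and Raeburn's uniqueness theorem \cite[Theorem~4.1]{FowlerRaeburn:IUMJ99} to the graph $E(n_k)$, exactly as in the proof of Proposition~\ref{prp:inclusions}, $\varphi_{n_k,\infty}$ is injective if and only if the gap projection
\[
G_\mu := \psi_{(\mu,k)} - \sum_{(e,\nu) \in \mu E(n_k)^1} T_e \psi_{(\nu,k)} T^*_e
\]
is nonzero for every $\mu \in E(n_k)^0 = E^{<n_k}$ (the requirement that each vertex projection be nonzero is subsumed, since $0 \le G_\mu \le \psi_{(\mu,k)}$).

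Finally I would identify $G_\mu$ with the operator on the left-hand side of~\eqref{eq:Toeplitz injectivity}. Since $Z(\mu,k) \subseteq Z(r(\mu),1)$ we have $\psi_{(\mu,k)} \le Q_{r(\mu)}$ and $Q_{r(\mu)}\psi_{(\mu,k)} = \psi_{(\mu,k)}$. If $|\mu| \ge 1$, write $\mu = e_1\mu'$ with $e_1 \in E^1$; then $\mu E(n_k)^1 = \{(e_1,\mu')\}$, the first clause of the relation in Definition~\ref{def:omega-rep} gives $T^*_{e_1}\psi_{(\mu,k)} = \psi_{(\mu',k)}T^*_{e_1}$ (whence $T_{e_1}\psi_{(\mu',k)}T^*_{e_1} = T_{e_1}T^*_{e_1}\psi_{(\mu,k)}$), and its last clause gives $T^*_e\psi_{(\mu,k)} = 0$ for $e \in r(\mu)E^1\setminus\{e_1\}$, so $G_\mu = \psi_{(\mu,k)} - \sum_{e\in r(\mu)E^1} T_eT^*_e\psi_{(\mu,k)} = \bigl(Q_{r(\mu)} - \sum_{e\in r(\mu)E^1} T_eT^*_e\bigr)\psi_{(\mu,k)}$. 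If $\mu = v \in E^0$, then $\mu E(n_k)^1 = \{(e,\nu) : e\in vE^1,\ \nu \in s(e)E^{n_k-1}\}$, the middle clause of the relation gives $T_eT^*_e\psi_{(v,k)} = \sum_{\nu\in s(e)E^{n_k-1}}T_e\psi_{(\nu,k)}T^*_e$ for each $e \in vE^1$, and again $G_v = \bigl(Q_v - \sum_{e\in vE^1}T_eT^*_e\bigr)\psi_{(v,k)}$. Combining this with the first two paragraphs shows that $\pi_{T,Q,\psi}$ is injective if and only if~\eqref{eq:Toeplitz injectivity} holds for every $k \in \NN$ and every $\mu \in E^{<n_k}$.

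The only delicate point is this last case analysis: one must enumerate $\mu E(n_k)^1$ correctly --- it looks quite different according as $\mu$ is a vertex or has positive length --- and see how the three clauses of the $\omega$-representation relation conspire to collapse the $E(n_k)$-gap projection into the $E$-gap projection compressed by $\psi_{(\mu,k)}$. By contrast, the direct-limit reduction of the first paragraph and the appeal to the Fowler--Raeburn theorem are routine.
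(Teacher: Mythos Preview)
Your proof is correct and follows essentially the same approach as the paper: reduce to the building blocks $\Tt(E,n_k)$ via the direct-limit description, apply Fowler--Raeburn's uniqueness theorem to each $E(n_k)$, and identify the $E(n_k)$-gap projections with $\bigl(Q_{r(\mu)} - \sum_{e\in r(\mu)E^1} T_eT^*_e\bigr)\psi_{(\mu,k)}$. Your write-up is in fact more explicit than the paper's, which compresses your case analysis into the phrase ``applying the relation~\eqref{eq:nrep rel} and collecting terms.''
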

\begin{proof}
Fix $k \in \NN$ and let $(t_{n_k, (e,\mu)}, q_{n_k, \mu})$ be the universal
Toeplitz--Cuntz--Krieger $E(n_k)$-family in $\Tt C^*(E(n_k)) = \Tt(E, n_k)$. Fix $\mu \in
E^{<n_k}$. The composition $\varphi_{T,Q,\psi} \circ i_{n_k,\infty}$ carries $q_{n_k,
\mu} - \sum_{(e,\nu) \in \mu E(n_k)^1} t_{n_k, (e,\nu)} t^*_{n_k, (e,\nu)}$ to
$\psi_{(\mu,k)} - \sum_{(e,\nu) \in \mu E(n_k)^1} T_e \psi_{(\nu,k)} T^*_e$. Applying the
relation~\eqref{eq:nrep rel} and collecting terms we obtain
\[
\varphi_{T,Q,\psi} \circ i_{n_k,\infty}
    \Big(q_{n_k, \mu} - \sum_{(e,\nu) \in \mu E(n_k)^1} t_{n_k, (e,\nu)} t^*_{n_k, (e,\nu)}\Big)
    = \Big(Q_{r(\mu)} - \sum_{e \in r(\mu)E^1} T_e T^*_e\Big)\psi_{(\mu,k)}.
\]
Theorem~4.1 of \cite{FowlerRaeburn:IUMJ99} shows that $\varphi_{T, Q, \psi} \circ i_{n_k,
\infty} : \Tt C^*(E(n_k)) \to A$ is injective if and only if $\varphi_{T, Q, \psi} \circ
i_{n_k, \infty}\Big(q_{n_k, \mu} - \sum_{(e,\nu) \in \mu E(n_k)^1} t_{n_k, (e,\nu)}
t^*_{n_k, (e,\nu)}\Big) \not= 0$ for all $\mu \in E^{<n_k}$. Since $i_{n_k,\infty}$ is
injective for each $k$, the result follows.
\end{proof}

We now state our main uniqueness result, which characterises the injective homomorphisms
of $C^*(E, \omega)$.

\begin{thm}\label{thm:CKUT}
Let $E$ be a row-finite directed graph with no sources, and take a sequence $\omega =
(n_k)^\infty_{k=1}$ of nonzero positive integers such that $n_k \mid n_{k+1}$ for all
$k$. Suppose that $n_k \to \infty$ as $k \to \infty$. Suppose that $(S, P, \psi)$ is an
$\omega$-representation of $E$. Then $\varphi_{S, P, \psi}$ is injective if and only if
$\psi_{(\mu,k)} \not= 0$ for all $k \in \NN$ and $\mu \in E^{<n_k}$.
\end{thm}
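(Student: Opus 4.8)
The plan is to identify $C^*(E,\omega)$ with Katsura's topological graph algebra $C^*(E(\infty)) \cong C^*(F_{E,\omega})$ via Lemma~\ref{lem:EisoF} and Kribs--Solel's isomorphism \cite[Theorem~6.3]{KribsSolel:JAMS07}, and then invoke Katsura's uniqueness theorem for topological graph $C^*$-algebras. The crucial structural input is that the topological graph $F_{E,\omega}$ has no loops: a loop would be a path $(e_1,x)(e_2,\cdot)\cdots(e_m,\cdot)$ in $F^*$ with $r_F = s_F$, forcing $[e_1\cdots e_m x_k]_{n_k} = x_k$ in $E^{<n_k}$ for all $k$, and since $n_k\to\infty$ this is impossible for $m>0$ (the length $|x_k| + m$ exceeds $n_k$ once $n_k$ is large, while $|[e_1\cdots e_m x_k]_{n_k}|<n_k$ and is congruent to $|x_k|+m$ modulo $n_k$; one checks this cannot stabilise). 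Because $F$ has no loops, Katsura's conditions for the gauge-invariant uniqueness theorem are automatically met: every point of $F^0$ is aperiodic, so Katsura's uniqueness theorem (see \cite{KatsuraIII}, and the no-loops criterion in \cite{KribsSolel:JAMS07}) says that \emph{any} Cuntz--Krieger $F$-pair whose associated homomorphism $t_F^0$ on $C_0(F^0)$ is injective induces an injective representation of $C^*(F)$ — no gauge-invariance hypothesis is needed.

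Concretely, I would proceed as follows. First, translate the $\omega$-representation $(S,P,\psi)$ into the topological-graph picture: $\psi$ is a homomorphism $C_0(\varprojlim E^{<n_k}) = C_0(F^0) \to B$, and the covariance relation in Definition~\ref{def:omega-rep} together with the Cuntz--Krieger relation for $(S,P)$ is exactly (after unwinding the identification of $\varinjlim C^*(E(n_k))$ with $C^*(F)$) the data of a Cuntz--Krieger $F$-pair $(\psi, t^1)$, with $t^1$ built from the $S_e$ and $\psi$. Second, observe that the hypothesis "$\psi_{(\mu,k)}\neq 0$ for all $k$ and $\mu\in E^{<n_k}$" is precisely the statement that $\psi$ is injective on $C_0(F^0)$: the projections $\psi(\chi_{Z(\mu,k)})$ generate the image of $\psi$, they are nonzero, and by the multiplication formula in Lemma~\ref{lem:omega mpctn} (restricted to the diagonal) the map $\chi_{Z(\mu,k)}\mapsto\psi_{(\mu,k)}$ respects the partial order and orthogonality structure, so $\psi$ is faithful on the dense subalgebra of locally constant functions, hence faithful. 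Third, apply Katsura's uniqueness theorem to conclude $\varphi_{S,P,\psi}$ is injective. The converse is immediate: if $\varphi_{S,P,\psi}$ is injective then each $\psi_{(\mu,k)} = \varphi_{S,P,\psi}(\rho_{(\mu,k)})$ is nonzero, since $\rho_{(\mu,k)} = i_{n_k,\infty}(\varepsilon_{n_k,\mu})$ is a nonzero projection in $C^*(E,\omega)$ (it is nonzero in $C^*(E(n_k))$ by the gauge-invariant uniqueness theorem for ordinary graph algebras, and $i_{n_k,\infty}$ is injective by Proposition~\ref{prp:inclusions}).

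The main obstacle — and the point requiring the most care — is making the identification of $(S,P,\psi)$ with a genuine Cuntz--Krieger $F$-pair in Katsura's sense fully rigorous, since Katsura's relations are phrased in terms of $C_c(F^1)$ and a left action on a Hilbert bimodule rather than in terms of the discrete-graph relations we have been using. I would handle this by leaning on Kribs--Solel's already-established isomorphism $C^*(E,\omega)\cong C^*(E(\infty))$ at the level of the \emph{universal} algebras, and then transporting the representation: the universal property in Theorem~\ref{thm:universal omegarep} gives $\varphi_{S,P,\psi}:C^*(E,\omega)\to B$, and under the isomorphism $C^*(E,\omega)\cong C^*(F)$ this becomes a representation of $C^*(F)$ whose restriction to $C_0(F^0)$ is (conjugate to) $\psi$; one then only needs that this restriction is injective, which is the $\psi_{(\mu,k)}\neq 0$ hypothesis as above. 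A secondary point to verify carefully is the no-loops claim for $F$, but this is exactly the observation Kribs and Solel make about $E(\infty)$, transported across the isomorphism $\phi$ of Lemma~\ref{lem:EisoF}, so I would simply cite it.
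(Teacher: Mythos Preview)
Your proposal is correct and matches the paper's approach essentially line for line: transport $\varphi_{S,P,\psi}$ along the isomorphism $C^*(E,\omega)\cong C^*(F)$ of Proposition~\ref{prp:IsomF}, invoke the no-loops property of $F$ inherited from $E(\infty)$ via Lemma~\ref{lem:EisoF} and \cite{KribsSolel:JAMS07}, and apply Katsura's Cuntz--Krieger uniqueness theorem to reduce injectivity to faithfulness on $C_0(F^0)$, which is exactly the hypothesis that each $\psi_{(\mu,k)}\neq 0$. One minor correction: the relevant uniqueness theorem is \cite[Theorem~5.12]{KatsuraI}, not a result from \cite{KatsuraIII}.
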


To prove this theorem, we use Katsura's results about topological graph $C^*$-algebras,
and the isomorphism $C^*(E, \omega) \cong C^*(E(\infty))$ established by Kribs and Solel.
The following result follows from the isomorphism $F \cong E(\infty)$, and Katsura's
arguments in \cite{KatsuraII}, but a precise description of the isomorphism that we need
to use is not provided there, so we give a detailed statement.

\begin{prp}\label{prp:IsomF}
Let $E$ be a row-finite directed graph with no sources, and take a sequence $\omega =
(n_k)^\infty_{k=1}$ of nonzero positive integers such that $n_k \mid n_{k+1}$ for all
$k$. Suppose that $n_k \to \infty$ as $k \to \infty$. There is an isomorphism $\pi :
\varinjlim C^*(E(n_k)) \to C^*(F)$ such that \[\pi(j_{n_k,\infty}(p_{n_k, \lambda})) =
t_F^0(\chi_{Z(\lambda,k)}), \quad \text{ and } \quad \pi(j_{n_k,\infty}(n_k,
s_{(e,\lambda)}))\ = t_F^1(\chi_{\{e\} \times Z(\lambda,k)}), \] where $(t_F^0,t_F^1)$ is
the universal Cuntz--Krieger pair for $C^*(F)$.
\end{prp}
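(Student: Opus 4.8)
The plan is to identify the generating Cuntz--Krieger pair $(t_F^0, t_F^1)$ for $C^*(F)$, use it to build an $\omega$-representation of $E$ inside $C^*(F)$, invoke the universal property of Theorem~\ref{thm:universal omegarep} to get a homomorphism $\pi\colon \varinjlim C^*(E(n_k)) \to C^*(F)$ with the stated values on generators, and then verify that $\pi$ is an isomorphism by producing an inverse (or by a uniqueness argument). Concretely, first I would set $S_e := t_F^1(\chi_{\{e\}\times\varprojlim E^{<n_k}})$ (where the domain tail is understood via $r(x_1)=s(e)$), $P_v := t_F^0(\chi_{Z(v,1)})$, and define $\psi\colon C_0(\varprojlim E^{<n_k})\to C^*(F)$ by $\psi(\chi_{Z(\mu,k)}) := t_F^0(\chi_{Z(\mu,k)})$, extended by the universal property of $C_0$ of the projective limit (which is $\varinjlim C_0(E^{<n_k})$, so it suffices to define $\psi$ compatibly on each finite level). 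Since $r_F(e,x)_k = [ex_k]_{n_k}$, one checks directly from Katsura's Cuntz--Krieger relations for the topological graph $F$ that $T_e^*\psi(\chi_{Z(\mu,k)})$ equals $\psi(\chi_{Z(\mu',k)})T_e^*$ when $\mu = e\mu'$, equals $\sum_{e\lambda\in E^{n_k}}\psi(\chi_{Z(\lambda,k)})T_e^*$ when $\mu = r(e)$ (this is where $|e\mu| = n_k$ wraps around via $r_{n_k}$), and is $0$ otherwise; and that $(S,P)$ is a Cuntz--Krieger $E$-family because $F$ has no ``sinks'' and $\sum_{e\in vE^1} S_eS_e^* = P_v$ descends from the Cuntz--Krieger relation at each $Z(v,1)$. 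This exhibits $(S,P,\psi)$ as an $\omega$-representation of $E$, so Theorem~\ref{thm:universal omegarep} yields $\pi = \varphi_{S,P,\psi}$ with the required formulas on $p_{n_k,\lambda}$ and $s_{n_k,(e,\lambda)}$.

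Next I would show $\pi$ is surjective: its range is a $C^*$-subalgebra containing $t_F^0(\chi_{Z(\mu,k)})$ for all $\mu,k$ and hence all of $t_F^0(C_0(F^0))$ (the $\chi_{Z(\mu,k)}$ span a dense subalgebra of $C_0(\varprojlim E^{<n_k})$), and containing $t_F^1(\chi_{\{e\}\times Z(\lambda,k)}) = S_e\psi(\chi_{Z(\lambda,k)})$; since such functions span a dense subspace of $C_c(F^1)$ and $t_F^1$ is continuous, the range contains $t_F^1(C_c(F^1))$ as well, so $\pi$ is onto.

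For injectivity I would invoke Katsura's gauge-invariant uniqueness theorem for topological graph $C^*$-algebras, or alternatively construct the inverse: the universal Cuntz--Krieger pair $(t_F^0, t_F^1)$ for $C^*(F)$ gives, via the isomorphism $F \cong E(\infty)$ of Lemma~\ref{lem:EisoF} and Kribs--Solel's identification $C^*(E,\omega)\cong C^*(E(\infty))$, a homomorphism $\varinjlim C^*(E(n_k)) \leftarrow C^*(F)$; checking that the two composites are identities on generators (using that the generators $p_{n_k,\lambda}, s_{n_k,(e,\lambda)}$ generate $\varinjlim C^*(E(n_k))$ and $t_F^0(C_0(F^0)), t_F^1(C_c(F^1))$ generate $C^*(F)$) finishes the proof. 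I expect the main obstacle to be the bookkeeping in verifying the $\omega$-representation relations directly against Katsura's definition of the Cuntz--Krieger pair --- in particular correctly matching the ``wrap-around'' case $\mu = r(e)$ with Katsura's relation involving the left action on the module --- since the paper deliberately avoids spelling out Katsura's formalism; here I would lean on the description in \cite{LiPaskSims:NYJM14} and on the explicit formula $r_F(e,x)_k = [ex_k]_{n_k}$ from Lemma~\ref{lem:EisoF} to keep the computation transparent.
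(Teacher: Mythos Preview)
Your approach is correct in outline but genuinely different from the paper's. The paper does not build an $\omega$-representation in $C^*(F)$ at all; instead it imports the Kribs--Solel isomorphism $\rho\colon \varinjlim C^*(E(n_k)) \to C^*(E(\infty))$ (obtained via Katsura's functoriality for regular factor maps, \cite[Proposition~4.13]{KatsuraII}) and composes with the isomorphism $\mu_\psi\colon C^*(E(\infty)) \to C^*(F)$ induced by the topological-graph isomorphism $\psi$ of Lemma~\ref{lem:EisoF}. The formulas on generators are then read off from the factor-map description. Your route constructs $\pi$ directly from Theorem~\ref{thm:universal omegarep}, which is more in keeping with the paper's own presentation of $C^*(E,\omega)$ and avoids citing Katsura's factor-map machinery; the trade-off is that you must verify the $\omega$-representation relations against Katsura's Cuntz--Krieger relations by hand (including checking that $F^0 = F^0_{\mathrm{reg}}$, which is what actually forces $\sum_{e\in vE^1} S_eS_e^* = P_v$; ``$F$ has no sinks'' is not quite the right hypothesis).

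One small correction on injectivity: Katsura's gauge-invariant uniqueness theorem applies to homomorphisms \emph{out of} $C^*(F)$, so it does not directly tell you that your map \emph{into} $C^*(F)$ is injective. What works instead is to apply the ordinary gauge-invariant uniqueness theorem for the discrete graph $E(n_k)$ to each composite $\pi\circ j_{n_k,\infty}$ (these are gauge-equivariant and nonzero on vertex projections since $t_F^0$ is injective), deduce that $\pi$ is injective on each $j_{n_k,\infty}(C^*(E(n_k)))$, and then pass to the direct limit. Your alternative---constructing the inverse via Lemma~\ref{lem:EisoF} and the Kribs--Solel identification---is precisely the paper's argument, so invoking it collapses the difference between the two approaches at that step.
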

\begin{proof}
For a topological graph $E$ we denote by $(t_E^0,t_E^1)$ the universal Cuntz--Krieger
pair for $C^*(E)$. The argument of Katsura \cite[Proposition~2.9]{KatsuraII} shows that
each regular factor map $m:E \to F$ of topological graphs $E$ and $F$ induces a
homomorphism $\mu_m:C^*(F) \to C^*(E)$ such that $\mu_m \circ t_F^i = t_{E}^i \circ
m_*^i$, for $i = 0,1$.

Let $j_{n_k, \infty}$ be the universal map from $C^*(E(n_k))$ into $\varinjlim
C^*(E(n_k))$. Let $\psi: F \to E(\infty)$ be the inverse of the isomorphism of Lemma
\ref{prp:IsomF}. Kribs and Solel define regular factor maps $m_{k,k+1}: E(n_{k+1}) \to
E(n_k)$ such that $E(\infty) = \varprojlim E(n_k)$. For each $k$, write $m_k:E(\infty)
\to E(n_k)$ for the induced factor map. In \cite[Theorem~6.3]{KribsSolel:JAMS07} Kribs
and Solel invoke \cite[Proposition~4.13]{KatsuraII} to show that there is an isomorphism
$\rho: \varinjlim (C^*(E(n_k)), j_{k, k+1}) \to C^*(E(\infty))$; it follows from the
arguments of \cite[Proposition~4.13]{KatsuraII}  that $\rho \circ j_{k,\infty} = \mu_{n_k}$.
Define $\pi:= \mu_\psi \circ \rho$. Since $\psi$ is an isomorphism, so is $\mu_\psi$, and
\begin{align*}
\pi(j_{n_k,\infty}(p_{n_k,\lambda}))
    &= \mu_\psi(\rho(j_{n_k, \infty}(p_{n_k,\lambda})))\\
    &= \mu_\psi \circ \mu_{m_k} (p_{n_k,\lambda})
     = \mu_\psi(t_{E(\infty)}^0 (\chi_{\psi(Z(\lambda, k))}))
     = t_F^0(\chi_{Z(\lambda, k)}).
\end{align*}
A similar calculation gives $\pi(j_{n_k,\infty}(s_{n_k,(e,\lambda)})) = t_F^1(\chi_{\{e\}
\times Z(\lambda,k)})$.
\end{proof}

\begin{proof}[Proof of Theorem~\ref{thm:CKUT}]
The identifications $C^*(E,n_k) = C^*(E(n_k))$ induce an isomorphism $\alpha:
C^*(E,\omega) \cong \varinjlim (C^*(E(n_k)), j_{k,k+1})$ such that
$\alpha(\rho_{(\mu,k)})) = j_{k,\infty} (p_\mu).$ So, if
\[
\pi: \varinjlim (C^*(E(n_k), j_{k,k+1}) \to C^*(F)
\]
is the isomorphism from the proof of Proposition~\ref{prp:IsomF},
we have $\pi \circ \alpha(\rho_{(\mu,k)}) = t_F^0(\chi_{Z(\mu,k)})$ for all $k \in \NN, \mu
\in E^{<n_k}$. Hence $\phi_{s,p,\psi} \circ \alpha^{-1} \circ \pi^{-1}$ is a homomorphism
of $C^*(F)$ that carries $t_F^0(\chi_{Z(\mu,k)})$ to $\psi_{(\mu,k)}$. Kribs and Solel show
that $E(\infty)$ has no cycles, so Lemma \ref{lem:EisoF} shows that $F$ has no cycles. So
\cite[Theorem~5.12]{KatsuraI} implies that $\psi_{s,p,\psi} \circ \alpha^{-1} \circ
\pi^{-1}$ is injective if and only if each $\psi_{(\mu,k)} \not = 0$. Since $\alpha$ and
$\pi$ are isomorphisms, the result follows.
\end{proof}

We now turn our attention to simplicity of $C^*(E, \omega)$. In
\cite[Section~9]{KribsSolel:JAMS07}, Kribs and Solel provide a necessary and sufficient
condition on the topological graph $E(\infty)$ for
$\varinjlim C^*(E, \omega)$ to be simple. In this section, we consider finite strongly
connected graphs, and we employ Perron--Frobenius theory, as well as Katsura's
characterisation of simplicity for $C^*$-algebras of topological graphs
\cite[Theorem~8.12]{KatsuraIII}, to improve upon Kribs and Solel's result to obtain a
necessary and sufficient condition in terms of $E$ and $\omega$ for simplicity of
$C^*(E, \omega)$ provided that the
terms $n_k$ in $\omega$ diverge to infinity. (If the $n_k$ are bounded then they are
eventually constant, and then $C^*(E, \omega) \cong C^*(E(N))$; and so simplicity of
$C^*(E, \omega)$ is characterised by \cite[Proposition~5.1]{BatesPaskEtAl:NYJM00}.)

The following technical result will be useful again later in our analysis of the
structure of the factor KMS states on $\Tt C^*(E, \omega)$.

\begin{lem}\label{lem:direct sum}
Let $E$ be a strongly connected finite directed graph with no sources, and take a
sequence $\omega = (n_k)^\infty_{k=1}$ of nonzero positive integers such that $n_k \mid
n_{k+1}$ for all $k$. Fix $k$ such that $\gcd(\Pp_E, n_k) = \gcd(\Pp_E, \omega)$. For
each equivalence class $\Lambda \in E^0/{\sim_{n_k}}$, let
\[
Q_{k,\Lambda} := \sum_{\mu \in E^{<n_k}, s(\mu) \in \Lambda} \pi_{(\mu,k)} \in \Tt(E, \omega).
\]
Then the $Q_{k,\Lambda}$ are nonzero mutually orthogonal projections, and
\[
\Tt(E, \omega) = \bigoplus_{\Lambda \in E^0/{\sim_{n_k}}} Q_{k,\Lambda} \Tt(E, \omega) Q_{k,\Lambda}.
\]
The images $P_{k,\Lambda}$ of the $Q_{k,\Lambda}$ in the quotient $C^*(E, \omega)$ are
also nonzero, and the direct summands $P_{k,\Lambda} C^*(E, \omega) P_{k,\Lambda}$ are
simple.
\end{lem}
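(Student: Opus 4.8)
The plan is to establish the four claims in turn: the $Q_{k,\Lambda}$ are nonzero, they are mutually orthogonal, they decompose $\Tt(E,\omega)$ as a direct sum, the images $P_{k,\Lambda}$ are nonzero, and finally each corner $P_{k,\Lambda} C^*(E,\omega) P_{k,\Lambda}$ is simple. The first two are straightforward: the $\pi_{(\mu,k)}$ are mutually orthogonal projections (they are images under a homomorphism of the mutually orthogonal characteristic functions $\chi_{Z(\mu,k)}$), so each $Q_{k,\Lambda}$ is a projection, and two different equivalence classes $\Lambda \ne \Lambda'$ index disjoint sets of paths $\mu$, so $Q_{k,\Lambda} Q_{k,\Lambda'} = 0$. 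Nonvanishing of $Q_{k,\Lambda}$ follows since it dominates each $\pi_{(\mu,k)}$ with $s(\mu) \in \Lambda$, and these are nonzero by Proposition~\ref{prp:coburn} applied to the universal $\omega$-representation (or directly because $i_{n_k,\infty}$ is injective and $\theta_{n_k,\mu} \ne 0$ in $\Tt C^*(E(n_k))$).

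For the direct-sum decomposition, I would use the spanning description of $\Tt(E,\omega)$ from Lemma~\ref{lem:omega mpctn}: $\Tt(E,\omega) = \clsp\{T_\alpha \psi_{(\mu,k)} T^*_\beta\}$, where by refining indices we may take all the $\mu$ at the same level $k$. The key point is then that $Q_{k,\Lambda}$ acts as a central-like cutoff on these spanning elements. Concretely, $Q_{k,\Lambda}(t_\alpha \pi_{(\mu,k)} t^*_\beta) Q_{k,\Lambda'}$ is nonzero only if $s(\mu) \in \Lambda$ and... here one must be careful: one wants to check that $Q_{k,\Lambda} t_\alpha \pi_{(\mu,k)} t^*_\beta = t_\alpha \pi_{(\mu,k)} t^*_\beta Q_{k,\Lambda''}$ for a unique $\Lambda''$, or more precisely that the left and right cutoffs by the family $\{Q_{k,\Lambda}\}$ agree on each spanning element. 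This is exactly a Perron--Frobenius bookkeeping fact: by Lemma~\ref{lem:"covariance"} (or the multiplication formula), $t^*_\beta \pi_{(\mu,k)}$ lands in the span of $\pi_{(\nu,k)}$'s whose source $s(\nu)$ is determined modulo $\gcd(\Pp_E,n_k)$ by $s(\mu)$ and $|\beta|$ via the cocycle $C_{n_k}$ of Lemma~\ref{lem:Cn}, and similarly $t_\alpha$ shifts the source-class by $|\alpha|$; and since $r(\alpha) = r(\mu)$ and $s(\beta) = r(\mu)$ the net shift is consistent, so $Q_{k,\Lambda} (t_\alpha \pi_{(\mu,k)} t^*_\beta) = (t_\alpha \pi_{(\mu,k)} t^*_\beta) Q_{k,\Lambda}$ for the class $\Lambda$ containing the appropriate source vertex. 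Hence $\sum_\Lambda Q_{k,\Lambda}$ is a unit (or approximate unit) acting as the identity on a dense subspace, and each spanning element lies in a single summand $Q_{k,\Lambda}\Tt(E,\omega)Q_{k,\Lambda}$; the decomposition follows by density. The same computation, applied in the quotient, gives the decomposition of $C^*(E,\omega)$ and nonvanishing of the $P_{k,\Lambda}$ (the latter because $\rho_{(\mu,k)} \ne 0$ in $C^*(E,\omega)$, which is immediate from the injectivity of $i_{n_k,\infty}$ on the Cuntz--Krieger side together with $\varepsilon_{n_k,\mu}\ne 0$).

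For simplicity of $P_{k,\Lambda} C^*(E,\omega) P_{k,\Lambda}$, I would argue via the topological-graph picture. By Proposition~\ref{prp:IsomF} we have $C^*(E,\omega) \cong C^*(F)$ with $\rho_{(\mu,k)} \mapsto t^0_F(\chi_{Z(\mu,k)})$, so under this isomorphism $P_{k,\Lambda}$ is identified with $t^0_F(\chi_{X_\Lambda})$, where $X_\Lambda = \bigcup_{s(\mu)\in\Lambda} Z(\mu,k)$ is exactly the minimal nonempty closed invariant subset of $F^0$ from Lemma~\ref{lem:invariantsubset}. Restricting the topological graph $F$ to the invariant set $X_\Lambda$ gives a topological graph $F|_{X_\Lambda}$, and Katsura's theory of restrictions to invariant open/closed sets (\cite{KatsuraIII}, the ideal-structure results) identifies $t^0_F(\chi_{X_\Lambda}) C^*(F) t^0_F(\chi_{X_\Lambda})$ with $C^*(F|_{X_\Lambda})$ — more precisely, $X_\Lambda$ closed invariant yields a quotient $C^*(F) \to C^*(F|_{X_\Lambda})$, and because $X_\Lambda$ is also open (the $Z(\mu,k)$ are compact open) it yields simultaneously an ideal, so the corner is a direct summand isomorphic to $C^*(F|_{X_\Lambda})$. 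Then $F|_{X_\Lambda}$ has no loops (being a restriction of the loop-free $F$ of Lemma~\ref{lem:EisoF}) and by Lemma~\ref{lem:invariantsubset} its vertex space $X_\Lambda$ is minimal, i.e. $F|_{X_\Lambda}$ is minimal in Katsura's sense and topologically free (vacuously, no loops); hence \cite[Theorem~8.12]{KatsuraIII} gives that $C^*(F|_{X_\Lambda})$ is simple.

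The main obstacle I expect is the careful bookkeeping in the second paragraph: verifying that the family $\{Q_{k,\Lambda}\}$ genuinely commutes with (equivalently: cuts consistently on) each spanning element $t_\alpha\pi_{(\mu,k)}t^*_\beta$, and that after refining to a common level $k$ one still has $\gcd(\Pp_E,n_k) = \gcd(\Pp_E,\omega)$ so that the equivalence classes do not merge further. This is where the precise form of the multiplication formula (Lemma~\ref{lem:omega mpctn}) and the cocycle identity $C_{n_k}(r(\lambda),s(\lambda)) = |\lambda| + \gcd(\Pp_E,n_k)\ZZ$ must be combined; the remaining topological-graph input is essentially a citation of Katsura's ideal-structure and simplicity theorems applied to the invariant clopen set $X_\Lambda$.
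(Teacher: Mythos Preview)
Your proposal is correct and follows essentially the same route as the paper: mutual orthogonality and nonvanishing of the $Q_{k,\Lambda}$ are established directly from the orthogonality of the $\pi_{(\mu,k)}$; the direct-sum decomposition comes from showing each spanning element $t_\alpha \pi_{(\mu,k)} t^*_\beta$ lies in a single corner (the paper does this bookkeeping by passing to the finite-level algebra $\Tt(E,n_k)$ and the $E(n_k)$-family, whereas you phrase it via the cocycle $C_{n_k}$ and the covariance lemma, but the content is the same, including your observation that refining to a larger level $l\ge k$ preserves the equivalence classes since $\gcd(\Pp_E,n_l)=\gcd(\Pp_E,\omega)$); and simplicity of the corners is obtained exactly as in the paper by identifying $P_{k,\Lambda}$ with $t^0_F(\chi_{X_\Lambda})$ under the isomorphism $C^*(E,\omega)\cong C^*(F)$, restricting $F$ to the minimal clopen invariant set $X_\Lambda$, and invoking Katsura's simplicity theorem for the loop-free restricted graph.
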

\begin{proof}
For $\Lambda \in E^0/{\sim_{n_k}}$, we put
\[
\Theta_{k,\Lambda} := \sum_{\mu \in E^{<n_k}, s(\mu) \in \Lambda} \theta_{n_k,\mu} \in \Tt(E, n_k).
\]
The $\Theta_{k,\Lambda}$ are mutually orthogonal by Proposition~\ref{prp:components}, and
nonzero because the generators of $\Tt(E, n_k) \cong \Tt C^*(E(n_k))$ are all nonzero.

We claim that for $\alpha \in E^{<n_k}$ and $\mu, \nu \in E^* r(\alpha)$, we have
$\sum_{\Lambda} Q_{\Lambda} t_\mu \theta_{(\alpha,k)} t^*_\nu Q_\Lambda = t_\mu
\theta_{(\alpha,l)} t^*_\nu$. Let $\Lambda$ be the equivalence class of $\alpha$ under
$\approx_{E(n_k)}$. Since $C_{n_k}(s(r_{n_k}(\mu,\alpha)), s(\alpha)) =
C_{n_k}(s([\mu\alpha]_{n_k}), s(\alpha)) = 0$, we have $s(r_{n_k}(\mu,\alpha)) \in
\Lambda$. Similarly, $s(r_{n_k}(\nu,\alpha)) \in \Lambda$. Let $(t, q)$ be the universal
Toeplitz--Cuntz--Krieger $E(n_k)$-family in $\Tt C^*(E(n_k))$. We have
\begin{align*}
\Theta_{k,\Lambda} t_{n_k, \mu} \theta_{n_k, \alpha} t^*_{n_k,\nu} \Theta_{k,\Lambda}
    &= \sum_{\eta,\zeta \in E^{<n_k}, s(\eta),s(\zeta) \in \Lambda} (q_\eta t_{(\mu,\alpha)} t^*_{(\nu,\alpha)} q_{\zeta}) \\
    &= t_{(\mu,\alpha)} t^*_{(\nu,\alpha)}
    = t_{n_k, \mu} \theta_{n_k, \alpha} t^*_{n_k,\nu},
\end{align*}
and since the $\Theta_{k,\Lambda}$ are mutually orthogonal, the claim follows.

We now show that each  $i_{n_k, n_{k+1}}(\Theta_{k,\Lambda}) = \Theta_{k+1,\Lambda}$. We
calculate:
\begin{align*}
i_{n_k, n_{k+1}}(\Theta_{k,\Lambda})
    &= i_{n_k, n_{k+1}}\Big(\sum_{\eta \in E^{<n_k},s(\eta) \in \Lambda} \theta_{n_k,\eta}\Big)\\
    &= \sum_{\zeta \in E^{<n_{k+1}}, s([\zeta]_{n_k}) \in \Lambda} \theta_{n_{k+1},\zeta}
    = \sum_{\zeta \in E^{<n_{k+1}}, s(\zeta) \in \Lambda} \theta_{n_{k+1},\zeta}
    = \Theta_{k+1,\Lambda}.
\end{align*}
The preceding two paragraphs show that every element of the spanning family for $\Tt(E,
\omega)$ described in the final statement of Lemma~\ref{lem:omega mpctn} belongs to
$Q_{k,\Lambda} \Tt(E, \omega) Q_{k,\Lambda}$ for some $\Lambda$, giving the desired
direct-sum decomposition.

To see that the images $P_{k,\Lambda}$ of the $Q_{k,\Lambda}$ in $C^*(E,\omega)$ are
nonzero, observe that for each $\Lambda$, and any $v \in \Lambda$, we have $P_{k,\Lambda}
\ge \rho_{(v,k)} = p_{n_k, v}$, which is nonzero since all the generators of
$C^*(E(n_k))$ are nonzero. For the assertion about simplicity, observe that the
isomorphism $C^*(E, \omega) \cong C^*(F)$ determined by Proposition~\ref{prp:IsomF}
carries each $P_{k,\Lambda}$ to $t_F^1(\chi_{X_\Lambda})$, where the $X_\Lambda$ are the
minimal invariant subsets of $F^0$ described in Lemma~\ref{lem:invariantsubset}. For each
$\Lambda$, let $F_\Lambda$ be the topological subgraph of $F$ given by $F_\Lambda^0 =
X_\Lambda$ and $F_\Lambda^1 = r_f^{-1}(X_\Lambda)$. Since $F^0_\Lambda$ and $F^1_\Lambda$
are clopen in $F^0$ and $F^1$, there are canonical inclusions $C(F_\Lambda^0)
\hookrightarrow C(F^0)$ and $C(F_\Lambda^1) \hookrightarrow C(F^1)$, and it is easy to
verify that the universal property of $C^*(F_\Lambda)$ applied to these inclusions gives
surjective homomorphisms $\iota_\Lambda : C^*(F_\Lambda) \to t_F^1(\chi_{X_\Lambda}) C^*(F)
t_F^1(\chi_{X_\Lambda})$. Lemma~\ref{lem:invariantsubset} shows that each $F_\Lambda$ is
invariant. By \cite[Lemma~9.1]{KribsSolel:JAMS07} $E(\infty)$ has no loops, so
Lemma~\ref{lem:EisoF} shows that $F$ also has no loops, and hence each $F_\Lambda$ has no
loops. Hence \cite[Theorem~8.12]{KatsuraIII} shows that each $C^*(F_\Lambda)$ is simple.
Hence each $P_{k,\Lambda} C^*(E, \omega) P_{k,\Lambda} \cong C^*(F_\Lambda)$ is simple.
\end{proof}

\begin{cor}\label{cor:simple}
Let $E$ be a strongly connected finite directed graph with no sources, and take a
sequence $\omega = (n_k)^\infty_{k=1}$ of nonzero positive integers such that $n_k \mid
n_{k+1}$ for all $k$. Suppose that $n_k \to \infty$ as $k \to \infty$. Then $C^*(E,
\omega)$ is simple if and only if $\gcd(\Pp_E, \omega) = 1$.
\end{cor}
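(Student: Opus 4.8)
The plan is to deduce Corollary~\ref{cor:simple} directly from Lemma~\ref{lem:direct sum} together with Proposition~\ref{prp:components}. Fix $k$ with $\gcd(\Pp_E, n_k) = \gcd(\Pp_E, \omega)$, and let $N := \gcd(\Pp_E, \omega)$. By Lemma~\ref{lem:direct sum} we have the direct-sum decomposition $C^*(E, \omega) = \bigoplus_{\Lambda \in E^0/{\sim_{n_k}}} P_{k,\Lambda} C^*(E, \omega) P_{k,\Lambda}$, in which each summand is a nonzero simple $C^*$-algebra. So $C^*(E, \omega)$ is simple if and only if there is exactly one summand, that is, if and only if $E^0/{\sim_{n_k}}$ is a singleton.

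First I would pin down the cardinality of $E^0/{\sim_{n_k}}$. By Lemma~\ref{lem:Cn} the map $C_{n_k}$ takes values in $\ZZ/N\ZZ$, and $v \sim_{n_k} w$ exactly when $C_{n_k}(v,w) = 0$. Fixing a base vertex $v_0$, the assignment $w \mapsto C_{n_k}(v_0, w)$ induces a well-defined injection of $E^0/{\sim_{n_k}}$ into $\ZZ/N\ZZ$; it is surjective because strong connectedness and the fact (recalled before Lemma~\ref{lem:Cn}) that $\Pp_E\ZZ = \{|\mu|-|\nu| : \mu,\nu \in v_0 E^* v_0\}$ together with the choice of $n_k$ let us realise every residue in $\ZZ/N\ZZ$ as $C_{n_k}(v_0,w)$ for a suitable $w$ (the argument mirrors the one in the proof of Proposition~\ref{prp:components} producing a cycle of any prescribed length modulo $n_k$, combined with strong connectedness to move the endpoint). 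Hence $|E^0/{\sim_{n_k}}| = N = \gcd(\Pp_E, \omega)$.

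Combining the two observations, $C^*(E,\omega)$ is simple if and only if $\gcd(\Pp_E, \omega) = 1$. One direction is immediate: if $\gcd(\Pp_E, \omega) = 1$ then $\sim_{n_k}$ has a single class, so the decomposition has one summand, which is simple. Conversely, if $\gcd(\Pp_E, \omega) > 1$ then there are at least two distinct classes, hence at least two nonzero orthogonal direct summands $P_{k,\Lambda} C^*(E,\omega) P_{k,\Lambda}$, each a proper ideal, so $C^*(E,\omega)$ is not simple.

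I expect the only real content to be the counting step: verifying that $|E^0/{\sim_{n_k}}| = \gcd(\Pp_E, \omega)$ rather than something smaller. Surjectivity onto $\ZZ/N\ZZ$ is the delicate part — one must produce, for each residue class modulo $N = \gcd(\Pp_E, n_k)$, an actual path in $E$ realising it, which uses Perron--Frobenius periodicity (the existence of cycles at $v_0$ whose length difference is exactly $\Pp_E$) exactly as in the proof of Proposition~\ref{prp:components}. Everything else is bookkeeping with the already-established direct-sum decomposition.
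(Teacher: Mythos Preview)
Your approach is correct and is exactly the paper's: the paper's proof is the single line ``This follows immediately from the final statement of Lemma~\ref{lem:direct sum},'' and you have spelled out the bookkeeping behind that line. One small simplification: the surjectivity of $w \mapsto C_{n_k}(v_0,w)$ onto $\ZZ/N\ZZ$ is easier than you indicate---since $E$ has no sources, for each $a \in \{0,\dots,N-1\}$ there is a path $\lambda \in v_0 E^a$, and then $w = s(\lambda)$ gives $C_{n_k}(v_0,w) = a + N\ZZ$; no appeal to the Perron--Frobenius periodicity argument of Proposition~\ref{prp:components} is needed.
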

\begin{proof}
This follows immediately from the final statement of Lemma~\ref{lem:direct sum}.
\end{proof}

\section{KMS states}\label{sec:KMS}

In this section we study the KMS states for the gauge action on $\Tt(E, \omega)$.
Throughout this section, if $X$ is a compact topological space, then $\Mm^+_1(X)$ denotes
the Choquet simplex of Borel probability measures on $X$. We write $A_E$ for the
adjacency matrix $A_E(v,w)= |vE^1 w|$ of a finite graph $E$, and $\rho(A_E)$ for its
spectral radius.

The following summarises our main results about KMS states on $\Tt(E, \omega)$ and
$C^*(E, \omega)$.

\begin{thm}\label{thm:mainKMS}
Let $E$ be a finite strongly connected graph with no sources, and take a sequence $\omega
= (n_k)^\infty_{k=1}$ of nonzero positive integers such that $n_k \mid n_{k+1}$ for all
$k$. Let $\alpha : \RR \to \Aut\Tt(E, \omega)$ be given by $\alpha_t = \gamma_{e^{it}}$.
\begin{enumerate}
\item\label{it:main1} For $\beta > \ln\rho(A_E)$ there is an affine isomorphism
    (described in Corollary~\ref{cor:normalisedaffine}) of $\Mm_1^+(\varprojlim
    E^{<n_k})$ onto the KMS$_\beta$-simplex of $\Tt(E, \omega)$.
\item\label{it:main2} There are exactly $\gcd(\Pp_E, \omega)$ extremal
    KMS$_{\ln\rho(A_E)}$-states of $\Tt(E, \omega)$ (described explicitly in
    Theorem~\ref{thm:betterKMS}).
\item\label{it:main3} For $\beta < \ln\rho(A_E)$, there are no KMS$_\beta$ states for
    $\Tt(E, \omega)$.
\item\label{it:main4} A KMS$_\beta$ state of $\Tt(E, \omega)$ factors through $C^*(E,
    \omega)$ if and only if $\beta = \ln\rho(A_E)$.
\end{enumerate}
\end{thm}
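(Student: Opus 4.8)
The plan is to assemble the four assertions from the more detailed results advertised in the introduction and cited inside the statement itself. Parts~\eqref{it:main1} and~\eqref{it:main2} are, essentially by design, immediate consequences of Corollary~\ref{cor:normalisedaffine} and Theorem~\ref{thm:betterKMS} respectively, once those are proved; so in the write-up of this theorem I would simply say that \eqref{it:main1} is a restatement of Corollary~\ref{cor:normalisedaffine} and that \eqref{it:main2} follows from Theorem~\ref{thm:betterKMS} together with the count of equivalence classes $|E^0/{\sim_{n_k}}| = \gcd(\Pp_E, n_k)$ (eventually $\gcd(\Pp_E,\omega)$) supplied by Lemma~\ref{lem:Cn} and Proposition~\ref{prp:components}. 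The bulk of the genuine argument lives in \eqref{it:main3} and the ``only if'' direction of \eqref{it:main4}; the ``if'' direction of \eqref{it:main4} follows once we know (from the construction in Proposition~\ref{prp:constructKMS} and Theorem~\ref{thm:betterKMS}) that KMS$_{\ln\rho(A_E)}$ states exist and see that they annihilate the defect projections $Q_{r(\mu)} - \sum_{e} T_eT_e^*$, hence by Proposition~\ref{prp:coburn}'s companion observation factor through $C^*(E,\omega)$.

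For \eqref{it:main3}, the first step is to reinterpret the KMS$_\beta$ condition via Theorem~\ref{thm:KMSchar}: a KMS$_\beta$ state corresponds to a probability measure $m$ on $\varprojlim E^{<n_k}$ satisfying a subinvariance inequality $e^{-\beta}(\text{some transfer operator})\,m \le m$, where the transfer operator is built from the adjacency matrix $A_E$. The key step is then: evaluate both sides of this inequality against the total mass (i.e.\ integrate the constant function~$1$, or better, pair with the component of $m$ sitting over each vertex). This reduces to a finite-dimensional inequality of the form $e^{-\beta} A_E^{t} x \le x$ for a nonnegative nonzero vector $x$ indexed by $E^0$ (the vector of masses $x_v = m(\{y : r(y_1)=v\})$, or an average thereof over cylinder sets at level $k$). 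Since $E$ is finite and strongly connected, $A_E$ is irreducible, and a standard Perron--Frobenius argument — pair $x$ with the left Perron eigenvector, or iterate the inequality and use that $\rho(A_E)$ is the spectral radius — forces $e^{-\beta}\rho(A_E) \le 1$, i.e.\ $\beta \ge \ln\rho(A_E)$. Hence there are no KMS$_\beta$ states for $\beta < \ln\rho(A_E)$.

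For the ``only if'' direction of \eqref{it:main4}, suppose $\phi$ is a KMS$_\beta$ state factoring through $C^*(E,\omega)$. Combining \eqref{it:main3} with the fact (again via Theorem~\ref{thm:KMSchar}) that a KMS$_\beta$ state of $\Tt(E,\omega)$ killing all the defect projections corresponds to a measure $m$ for which the subinvariance inequality is an \emph{equality}, we get $e^{-\beta}A_E^t x = x$ with $x$ a positive vector; by Perron--Frobenius (uniqueness of the positive eigenvalue), $e^{-\beta}\rho(A_E) = 1$, so $\beta = \ln\rho(A_E)$. I would phrase this cleanly by invoking the results of \cite{EnomotoFujiiEtAl:MJ84, anHuefLacaEtAl:ETDS14} on the restriction of $\phi$ to each simple direct summand $P_{k,\Lambda}C^*(E,\omega)P_{k,\Lambda}$ of Lemma~\ref{lem:direct sum}, exactly as flagged in the introduction: each such summand carries a dynamics whose only possible KMS temperature is $\ln\rho(A_E)$ (the spectral radius is unchanged by passing to the irreducible components of $E(\infty)$). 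The main obstacle, and the step deserving the most care, is the bookkeeping in extracting the finite-dimensional Perron--Frobenius inequality from the measure-theoretic subinvariance condition of Theorem~\ref{thm:KMSchar} — in particular making sure the projective-limit structure of $\varprojlim E^{<n_k}$ does not obstruct passing to a genuine $E^0$-indexed vector, which is handled by summing the contribution of $m$ over the cylinder sets $Z(\mu,k)$ with fixed source and using the compatibility $x_v = \sum_{\mu\in vE^{<n_k}} m(Z(\mu,k))$.
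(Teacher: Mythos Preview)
Your treatment of parts~\eqref{it:main1}, \eqref{it:main2}, and~\eqref{it:main3} matches the paper's: those follow directly from Corollary~\ref{cor:normalisedaffine}, Theorem~\ref{thm:betterKMS}, and the subinvariance argument (packaged in the paper as Lemma~\ref{lem:subinvariance}, which does exactly what you describe---push the measure down to $\Mm(E^{<n_1}) = \Mm(E^0)$ and apply the Perron--Frobenius subinvariance theorem to $A_E$). Your ``only if'' direction of~\eqref{it:main4} is also correct and agrees with the paper, which uses Theorem~\ref{thm:KMSchar}(\ref{it:KMS factor}) to get $A_\omega m^\phi = e^\beta m^\phi$ and then Lemma~\ref{lem:uniquePF} to force $\beta = \ln\rho(A_E)$; your alternative suggestion via the direct summands and \cite{EnomotoFujiiEtAl:MJ84, anHuefLacaEtAl:ETDS14} would also work but is less direct.

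There is, however, a genuine gap in your ``if'' direction of~\eqref{it:main4}. The claim is that \emph{every} KMS$_{\ln\rho(A_E)}$ state of $\Tt(E,\omega)$ factors through $C^*(E,\omega)$. Knowing that the specific states built in Theorem~\ref{thm:betterKMS} factor through does not establish this: a priori there could be further KMS$_{\ln\rho(A_E)}$ states of $\Tt(E,\omega)$ that do not descend (and Theorem~\ref{thm:betterKMS} only classifies the KMS simplex of $C^*(E,\omega)$, not of $\Tt(E,\omega)$). The paper closes this gap as follows: for an arbitrary KMS$_{\ln\rho(A_E)}$ state $\phi$ of $\Tt(E,\omega)$, Theorem~\ref{thm:KMSchar}(\ref{it:KMS subinvariance}) gives $A_\omega m^\phi \le \rho(A_E)\, m^\phi$, and then the equality case of Lemma~\ref{lem:subinvariance} (with $s = \rho(A_E)$) forces $A_\omega m^\phi = \rho(A_E)\, m^\phi$; Theorem~\ref{thm:KMSchar}(\ref{it:KMS factor}) then says $\phi$ factors through. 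You already have all the ingredients for this in your plan for~\eqref{it:main3}; you just need to invoke the equality clause of the subinvariance theorem at the critical value, rather than appealing to the existence of particular states. (Also, your citation of Proposition~\ref{prp:constructKMS} here is off: that proposition only handles $\beta > \ln\rho(A_E)$.)
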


\subsection{A transformation on finite signed Borel measures} Let $E$ be a finite
directed graph with no sources, and $\omega = (n_k)^\infty_{k=1}$ a sequence of positive
integers such that $n_k \mid n_{k+1}$ for all $k$. We consider the Banach space
$\Mm(\varprojlim E^{<n_k})$ of finite signed measures on the spectrum $\varprojlim
E^{<n_k}$ of the commutative subalgebra of $C^*(E, \omega)$ described in
Section~\ref{sec:universal}. We show that the vertex adjacency matrices $A_{E(n_k)}$
induce a bounded linear transformation $A_\omega$ of $\Mm(\varprojlim E^{<n_k})$. We use
Perron--Frobenius theory to show that $\|A_\omega\| = \rho(A_E)$, and that it always
admits a positive eigenmeasure. We provide a condition under which this eigenmeasure is
unique up to scalar multiples.

For $k \ge 1$, define a map $p^*_{n_{k+1}, n_k} : \Mm(E^{<n_{k+1}}) \to \Mm(E^{<n_k})$ by
$p^*_{n_{k+1}, n_k}(m)(U) = m(p_{n_{k+1}, n_k}^{-1}(U))$, where $U$ is a Borel measurable subset
of $E^{<n_k}$. Then $p^*_{n_{k+1}, n_k}$ is linear and the $(\Mm(E^{<n_k}), p^*_{n_{k+1}, n_k})$ form a
projective sequence of Banach spaces, giving a Fr\'echet space $\varprojlim (\Mm(E^{<n_k}),
p^*_{n_{k+1}, n_k})$ We can also form the Banach space $\Mm(\varprojlim E^{<n_k})$.
The following lemma describes an injective (but typically not surjective) linear map from the
latter into the former; the result must be standard, but it is also easy enough to give a quick proof.

\begin{lem}\label{lem:M-injection}
Let $E$ be a finite directed graph with no sources, and take a sequence $\omega =
(n_k)^\infty_{k=1}$ of nonzero positive integers such that $n_k \mid n_{k+1}$ for all
$k$. There is a continuous injective linear map $\iota_\omega : \Mm(\varprojlim E^{<n_k})
\to \varprojlim (\Mm(E^{<n_k}), p^*_{n_{k+1}, n_k})$ such that
$\iota_\omega(m)_k(\{\tau\}) = m(Z((\tau,k))$ for all $m,k,\tau$.
\end{lem}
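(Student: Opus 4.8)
The plan is to build the map $\iota_\omega$ coordinatewise, using the fact that for each fixed $k$ the assignment $m \mapsto \iota_\omega(m)_k$, where $\iota_\omega(m)_k(\{\tau\}) := m(Z(\tau,k))$ for $\tau \in E^{<n_k}$, genuinely defines a finite signed measure on the finite set $E^{<n_k}$. Since $E^{<n_k}$ is finite, a signed measure on it is just a function on points, extended additively; so this prescription makes sense as soon as we know $m(Z(\tau,k))$ is a real number, which it is because $Z(\tau,k)$ is a (compact open) Borel subset of $\varprojlim E^{<n_j}$ and $m \in \Mm(\varprojlim E^{<n_j})$. Linearity in $m$ is immediate from additivity of the pushforward-type formula, since $(\lambda m + m')(Z(\tau,k)) = \lambda m(Z(\tau,k)) + m'(Z(\tau,k))$.

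Next I would check the compatibility needed to land in the projective limit, namely $p^*_{n_{k+1},n_k}(\iota_\omega(m)_{k+1}) = \iota_\omega(m)_k$. This reduces to the set-theoretic identity $Z(\tau,k) = \bigsqcup_{\sigma \in E^{<n_{k+1}},\,[\sigma]_{n_k} = \tau} Z(\sigma, k+1)$, which is exactly the decomposition already used in the proof of Lemma~\ref{lem:omega mpctn}; applying finite additivity of $m$ to this disjoint union gives $m(Z(\tau,k)) = \sum_{[\sigma]_{n_k}=\tau} m(Z(\sigma,k+1))$, which is precisely $p^*_{n_{k+1},n_k}(\iota_\omega(m)_{k+1})(\{\tau\})$. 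Hence $(\iota_\omega(m)_k)_k$ is a coherent sequence and $\iota_\omega(m) \in \varprojlim(\Mm(E^{<n_k}), p^*_{n_{k+1},n_k})$.

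For continuity, I would work with the defining seminorms of the Fr\'echet space on the right: a basis of these is $m' \mapsto \|m'_k\|$ for each $k$, where $\|m'_k\|$ is the total-variation norm on the finite set $E^{<n_k}$, i.e. $\sum_{\tau \in E^{<n_k}} |m'_k(\{\tau\})|$. So I must bound $\sum_{\tau \in E^{<n_k}} |m(Z(\tau,k))|$ by $\|m\|$. Writing $X = P \sqcup N$ for a Hahn decomposition of $m$ and $m = m^+ - m^-$ for the Jordan decomposition (Section~\ref{sec:background}), each $Z(\tau,k)$ splits as $(Z(\tau,k)\cap P) \sqcup (Z(\tau,k)\cap N)$, so $|m(Z(\tau,k))| \le m^+(Z(\tau,k)) + m^-(Z(\tau,k))$; summing over the disjoint cover $\{Z(\tau,k) : \tau \in E^{<n_k}\}$ of $\varprojlim E^{<n_j}$ and using additivity of $m^+$ and $m^-$ gives $\sum_\tau |m(Z(\tau,k))| \le m^+(\varprojlim E^{<n_j}) + m^-(\varprojlim E^{<n_j}) = \|m\|$. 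This establishes continuity of $\iota_\omega$.

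Finally, injectivity. If $\iota_\omega(m) = 0$ then $m(Z(\tau,k)) = 0$ for every $k$ and every $\tau \in E^{<n_k}$. The sets $Z(\tau,k)$ form a basis of compact open sets for the (compact, totally disconnected, metrizable) space $\varprojlim E^{<n_j}$, closed under finite intersections (two cylinders are either disjoint or nested), so they generate the Borel $\sigma$-algebra. A signed Borel measure vanishing on a generating algebra of sets vanishes identically, by the uniqueness part of the Hahn--Kolmogorov extension theorem applied separately to $m^+$ and $m^-$ (or directly to $m$). Hence $m = 0$, so $\iota_\omega$ is injective. The only step requiring a little care is recognizing which seminorms control the target Fr\'echet topology and matching them against the single norm $\|\cdot\|$ on the source; everything else is bookkeeping with the cylinder-set decomposition already in hand.
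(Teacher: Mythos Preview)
Your proof is correct and follows essentially the same route as the paper's: define the coordinate maps $m \mapsto m(Z(\cdot,k))$, check compatibility via the cylinder-set decomposition, and deduce injectivity from the fact that the cylinders generate the Borel $\sigma$-algebra. The only cosmetic difference is that the paper packages continuity by invoking the universal property of the projective limit (leaving the continuity of the coordinate maps implicit), whereas you unpack this by bounding each seminorm $\sum_\tau |m(Z(\tau,k))|$ against $\|m\|$ via the Jordan decomposition; your version is more explicit but amounts to the same verification.
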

\begin{proof}
For each $k \ge 1$, define $p^*_{\infty, n_k} : \Mm(\varprojlim E^{<n_k}) \to
\Mm(E^{<n_k})$ by $p^*_{\infty, n_k}(m)(\{\tau\}) = m(p^{-1}_{\infty, k}(\tau))$. Then
each $p^*_{\infty, n_k}$ is linear, and we have
\[
p^*_{n_{k+1},n_k}(p^*_{\infty, n_{k+1}}(m))(\{\tau\})
    = m(p^{-1}_{\infty, n_{k+1}}(p^{-1}_{n_{k+1}, n_k}(\tau)))
    = m(p^{-1}_{\infty, n_k}(\tau))
    = p^*_{\infty, n_k}(m)(\{\tau\})
\]
for all $k$. So the universal property of $\varprojlim (\Mm(E^{<n_k}), p^*_{n_{k+1},
n_k})$ implies that there is a continuous map $\iota_\omega$ such that
$\iota_\omega(m)_k(\tau) = m(Z(\tau,k))$ for all $m,k,\tau$. Direct calculation shows
that $\iota_\omega$ is linear.

For injectivity, take $m \in \Mm(\varprojlim E^{<n_k})$ with $\iota_\omega(m) = 0$. For
each $k \in \NN$ and $\mu \in E^{<n_k}$, we have $m(Z(\mu,k)) = \iota_\omega(m)_k(\{\mu\}) = 0$,
and since the $Z(\mu,k)$ are a basis for $\varprojlim E^{<n_k}$, we deduce that $m = 0$.
\end{proof}

\begin{rmk}
The map $\iota_\omega$ is typically not surjective.  For example, let $E$ be the directed
graph with one vertex $v$ and one edge $e$. Define $m_0 \in \Mm(E^0)$ by $m_0(\{v\}) =
1$. Let $n_k = 2^k$ for all $k$, and inductively define $m_k \in \Mm(E^{<n_k})$ by
\[
m_k(\{e^j\}) = 2m_{k-1}(\{e^j\})\quad\text{ and } m_k\big(\{e^{j+2^{k-1}}\}\big) = -m_{k-1}(\{e^j\})
\]
for $j \in \{0, \dots, 2^{k-1} -1\}$. Then $(m_k)^\infty_{k=1} \in \varprojlim
\Mm(E^{<n_k})$, but we have $m_k(\{v\}) = 2^k \to \infty$. For any $m \in \Mm(\varprojlim
E^{<n_k})$, we have $\iota_\omega(m)_k(\{v\}) = m(Z(k,\tau)) \le m^+(Z(k,\tau))$ for all
$k$, so the sequence $\iota_\omega(m)_k(\{v\})$ is bounded. So $(m_k)^\infty_{k=1}$ does
not belong to the range of $\iota_\omega$.
\end{rmk}

In what follows, if $m \in \Mm(\varprojlim E^{<n_k})$, we will frequently write $m_{n_k}$
for $\iota_\omega(m)_k \in \Mm(E^{<n_k})$. We also regard the adjacency matrix
$A_{E(n_k)}$ as a linear transformation of the finite-dimensional vector space
$\Mm(E^{<n_k}) \cong \RR^{E^{<n_k}}$. We show how the $A_{E(n_k)}$ induce a linear
transformation of $\varprojlim \Mm(E^{<n_k})$.

\begin{lem}\label{lem:Ank system}
Let $E$ be a finite directed graph with no sources, and take a sequence $\omega =
(n_k)^\infty_{k=1}$ of nonzero positive integers such that $n_k \mid n_{k+1}$ for all
$k$. For $k \in \NN$ let $A_{n_k} := A_{E(n_k)}$, regarded as a linear transformation of
$\Mm(E^{<n_k})$. For $m \in \Mm(E^{<n_k})$, we have
\begin{equation}\label{eq:Ank action}
(A_{n_k}m)(\{\mu\})
    = \begin{cases}
        m(\{\mu_2\dots\mu_{|\mu|}\}) &\text{ if $\mu \in E^{<n_k} \setminus E^0$} \\
        \sum_{e \nu \in \mu E^{n_k}} m(\{\nu\}) &\text{ if $\mu \in E^0$,}
    \end{cases}
\end{equation}
and
\begin{equation}\label{eq:Ank compatible}
    A_{n_{k-1}}(p^*_{n_k, n_{k-1}}(m)) = p^*_{n_k, n_{k-1}}(A_{n_k}(m)).
\end{equation}
\end{lem}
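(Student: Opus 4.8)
The plan is to dispatch the two displayed identities separately; both are elementary and amount to unwinding definitions, so the substance is combinatorial bookkeeping rather than any new idea.

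For \eqref{eq:Ank action} I would argue directly from the definition of the adjacency matrix of $E(n_k)$. Viewing $\Mm(E^{<n_k}) = \RR^{E^{<n_k}}$, the operator $A_{n_k}$ acts on point masses by $(A_{n_k}m)(\{\mu\}) = \sum_{\nu \in E^{<n_k}} |\mu E(n_k)^1 \nu|\, m(\{\nu\})$, so it suffices to list the edges of $E(n_k)$ with range $\mu$. Reading off from $E(n_k)^1 = \{(e,\lambda) : e \in E^1,\ \lambda \in s(e)E^{<n_k}\}$ with $s_{n_k}(e,\lambda) = \lambda$ and $r_{n_k}(e,\lambda) = e\lambda$ when $|\lambda| < n_k - 1$, $= r(e)$ when $|\lambda| = n_k - 1$: if $\mu \in E^{<n_k}\setminus E^0$ the only such edge is $(\mu_1, \mu_2\cdots\mu_{|\mu|})$, with source $\mu_2\cdots\mu_{|\mu|}$; if $\mu = v \in E^0$ they are exactly the $(e,\lambda)$ with $e \in vE^1$ and $\lambda \in s(e)E^{n_k-1}$, with sources $\lambda$. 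Substituting these two enumerations into the formula for $(A_{n_k}m)(\{\mu\})$ gives \eqref{eq:Ank action}.

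For \eqref{eq:Ank compatible}, abbreviate $n = n_{k-1}$, $N = n_k$ (so $n \mid N$). Both sides lie in $\Mm(E^{<n}) = \RR^{E^{<n}}$, so it is enough to evaluate on each point mass $\{\tau\}$, $\tau \in E^{<n}$, where I would use $p^*_{N,n}(m')(\{\tau\}) = \sum_{\nu \in E^{<N},\, [\nu]_n = \tau} m'(\{\nu\})$ together with \eqref{eq:Ank action} for both $A_n$ and $A_N$, and then compare, splitting into the cases $\tau \notin E^0$ and $\tau \in E^0$. When $|\tau| \ge 1$, each $\nu$ with $[\nu]_n = \tau$ has $|\nu| \ge 1$, hence $(A_N m)(\{\nu\}) = m(\{\nu_2\cdots\nu_{|\nu|}\})$, and $\nu \mapsto \nu_2\cdots\nu_{|\nu|}$ is a bijection of $\{\nu \in E^{<N} : [\nu]_n = \tau\}$ onto $\{\eta \in E^{<N} : [\eta]_n = \tau_2\cdots\tau_{|\tau|}\}$ --- this is where $n \mid N$ enters, via the observation that no $\eta$ on the right can have length $N-1$ since $N - |\tau|$ is not a multiple of $n$ --- so both sides of \eqref{eq:Ank compatible} at $\{\tau\}$ reduce to $\sum_{\eta} m(\{\eta\})$ over that target set. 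When $\tau = v \in E^0$, the set $\{\nu \in E^{<N} : [\nu]_n = v\}$ consists of $v$ itself together with the $\nu \in vE^{<N}$ with $0 < |\nu| \in n\NN$; I would split the term $\nu = v$ off from the rest and, writing $\nu = \nu_1\eta$ for the remaining $\nu$, check that $p^*_{N,n}(A_N m)(\{v\})$ equals $\sum_{e\lambda \in vE^{n}} \sum_{\eta \in E^{<N},\, [\eta]_n = \lambda} m(\{\eta\}) = A_n(p^*_{N,n}m)(\{v\})$, with the $\nu = v$ term $(A_N m)(\{v\}) = \sum_{e\lambda \in vE^N} m(\{\lambda\})$ accounting exactly for the $\eta$ of length $N-1$ on the right-hand side.

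The one place calling for care is the $\tau \in E^0$ case: reconciling the two double sums requires keeping track of how $n \mid N$ forces the path-length congruence classes to line up, and in particular recognising the ``overflow'' contribution $(A_N m)(\{v\})$ as precisely the length-$(N-1)$ part of the other sum. There is no conceptual difficulty here; alternatively the whole computation could be phrased via the Kribs--Solel factor map $E(N) \to E(n)$, but the direct check through \eqref{eq:Ank action} is shortest.
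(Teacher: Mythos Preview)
Your proposal is correct and follows essentially the same approach as the paper: both establish \eqref{eq:Ank action} by enumerating the edges of $E(n_k)$ with range $\mu$, and both verify \eqref{eq:Ank compatible} by evaluating each side at a point $\tau \in E^{<n_{k-1}}$ and splitting into the cases $\tau \notin E^0$ and $\tau \in E^0$. Your treatment of the vertex case is slightly more detailed than the paper's---you explicitly isolate the $\nu = v$ ``overflow'' term and match it with the length-$(N-1)$ contributions on the other side, whereas the paper compresses this into a single displayed equality---but the underlying bookkeeping is identical.
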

\begin{proof}
We write $\{\delta_{\mu,k} : \mu \in E^{<n_k}\}$ for the basis of Dirac measures on
$E^{<n_k}$. We have
\begin{align*}
A_{n_k}(\delta_{\mu,k})
    &= \sum_{\nu \in E^{<n_k}} A_{n_k}(\nu,\mu)\delta_{\nu,k} \\
    &= \sum_{\nu \in E^{<n_k}} |\nu E(n_k)^1 \mu| \delta_{\nu,k}
    = \begin{cases}
        \delta_{\mu_2\dots\mu_{|\mu|}, k} &\text{ if $\mu \in E^{<n_k} \setminus E^0$} \\
        \sum_{e \nu \in \mu E^{n_k}} \delta_{\nu, k} &\text{ if $\mu \in E^0$.}
    \end{cases}
\end{align*}
Now~\eqref{eq:Ank action} follows from linearity.

To prove~\eqref{eq:Ank compatible}, first consider $\mu \in E^{<n_{k-1}} \setminus E^0$.
We have
\begin{align*}
A_{n_{k-1}}(p^*_{n_k, n_{k-1}}(m))(\{\mu\})
    &= p^*_{n_k, n_{k-1}}(m)(\{\mu_2\dots\mu_{|\mu|}\})
    = \sum_{\tau \in E^{<n_k}, [\tau]_{n_{k-1}} = \mu_2\dots\mu_{|\mu|}} m(\{\tau\}) \\
    &= \sum_{\eta \in E^{<n_k}, [\eta]_{n_{k-1}} = \mu} A_{n_k}(m)(\{\eta\})
    = p^*_{n_k, n_{k-1}}(A_{n_k}(m))(\{\mu\}).
\end{align*}
Now consider $\mu = v \in E^0$. We have
\begin{align*}
A_{n_{k-1}}(p^*_{n_k, n_{k-1}}(m))(\{v\})
    &= \sum_{e\tau \in vE^{n_{k-1}}} p^*_{n_k, n_{k-1}}(m)(\{\tau\})
    = \sum_{\substack{e \in vE^1, \lambda \in s(e)E^{<n_k}\\|e\lambda| \in n_{k-1}\NN}} m(\{\lambda\})\\
    &= \sum_{\lambda \in E^{<n_k}, [\lambda]_{n_{k-1}} = v} A_{n_k}(m)(\{\lambda\})
    = p^*_{n_k, n_{k-1}}(A_{n_k}(m))(\{v\}).\qedhere
\end{align*}
\end{proof}

\begin{prp}\label{prp:Aomega}
Let $E$ be a finite directed graph with no sources, and take a sequence $\omega =
(n_k)^\infty_{k=1}$ of nonzero positive integers such that $n_k \mid n_{k+1}$ for all
$k$. For $k \in \NN$ let $A_{n_k} := A_{E(n_k)}$, regarded as a linear transformation of
$\Mm(E^{<n_k})$. There is a linear transformation $A_\omega$ of $\varprojlim
\Mm(E^{<n_k})$ given by $A_\omega m = (A_{n_1} m_1, A_{n_2} m_2, \dots)$. The inclusion
$\iota_\omega$ of Lemma~\ref{lem:M-injection} satisfies
\[
A_\omega(\iota_\omega(\Mm(\varprojlim E^{<n_k}))) \subseteq \iota_\omega(\Mm(\varprojlim E^{<n_{k}}))
\]
\end{prp}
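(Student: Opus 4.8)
The plan is to reduce to positive measures and then construct the required measure on $\varprojlim E^{<n_k}$ by hand. First I would invoke the Jordan decomposition to write $m = m^+ - m^-$ with $m^\pm \in \Mm^+(\varprojlim E^{<n_k})$; since both $\iota_\omega$ (Lemma~\ref{lem:M-injection}) and $A_\omega$ are linear, $A_\omega(\iota_\omega(m)) = A_\omega(\iota_\omega(m^+)) - A_\omega(\iota_\omega(m^-))$, so it suffices to treat the case $m \geq 0$.

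So fix $m \in \Mm^+(\varprojlim E^{<n_k})$ and set $\ell_k := A_{n_k} m_{n_k} \in \Mm(E^{<n_k})$, where as before $m_{n_k} := \iota_\omega(m)_k$. Two properties of the sequence $(\ell_k)_k$ drive the argument. First, each $\ell_k \geq 0$: the matrix $A_{n_k}$ has nonnegative entries, and $m_{n_k}$, being a pushforward of $m$ along the coordinate projection $\varprojlim E^{<n_j} \to E^{<n_k}$, is a positive measure. Second, $(\ell_k)_k$ is a consistent sequence, i.e.\ $p^*_{n_k, n_{k-1}}(\ell_k) = \ell_{k-1}$ for all $k$: this is precisely \eqref{eq:Ank compatible} applied to $m_{n_k}$, together with the fact that $(m_{n_k})_k = \iota_\omega(m)$ already lies in $\varprojlim(\Mm(E^{<n_k}), p^*_{n_{k+1}, n_k})$. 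Note that consistency and positivity together force the total masses $\ell_k(E^{<n_k})$ to be constant in $k$; this is exactly what will make the construction below land in $\Mm(\varprojlim E^{<n_k})$ rather than merely in the Fr\'echet space (compare the Remark after Lemma~\ref{lem:M-injection}).

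Next I would manufacture a positive Borel measure $m'$ on the compact space $X := \varprojlim E^{<n_k}$ --- compact since $E$ is finite --- with $m'(Z(\mu,k)) = \ell_k(\{\mu\})$ for all $k$ and all $\mu \in E^{<n_k}$. Every locally constant function on $X$ has the form $g \circ p_{\infty, n_k}$ for some $k$ and some $g \colon E^{<n_k} \to \CC$, and $\Phi(g \circ p_{\infty, n_k}) := \sum_{\mu \in E^{<n_k}} g(\mu)\,\ell_k(\{\mu\})$ defines a linear functional on the locally constant functions, well defined exactly because $(\ell_k)_k$ is consistent. As each $\ell_k \geq 0$, the functional $\Phi$ is positive, hence bounded, hence extends to a positive linear functional on $C(X)$; the Riesz representation theorem then yields $m' \in \Mm^+(X)$ with $\int f \, dm' = \Phi(f)$. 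Evaluating on $f = \chi_{Z(\mu,k)} = \chi_{\{\mu\}} \circ p_{\infty, n_k}$ gives $m'(Z(\mu,k)) = \ell_k(\{\mu\})$, that is, $\iota_\omega(m')_k(\{\mu\}) = (A_{n_k} m_{n_k})(\{\mu\}) = (A_\omega \iota_\omega(m))_k(\{\mu\})$, so $\iota_\omega(m') = A_\omega(\iota_\omega(m))$. For general $m$, taking $m' := (m^+)' - (m^-)'$ and using linearity of $\iota_\omega$ and $A_\omega$ finishes the proof.

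The one genuine subtlety --- and the point I would be most careful about --- is the contrast with the Remark following Lemma~\ref{lem:M-injection}: a consistent sequence in $\varprojlim \Mm(E^{<n_k})$ need \emph{not} come from a measure on $X$, so the argument cannot do without positivity. It is exactly the positivity of $m$ (hence of $\ell_k$) that simultaneously makes $\Phi$ bounded and keeps the total masses finite, which is why the Jordan-decomposition reduction at the outset is not merely cosmetic. (One can avoid the Riesz theorem entirely and instead define $m'$ directly on the clopen algebra of $X$ by $m'(Z(\mu,k)) := \ell_k(\{\mu\})$, extend by finite additivity, and observe that since $X$ is compact every clopen cover is finite, so this set function is automatically countably additive on the algebra; Carath\'eodory's theorem then extends it to a Borel measure. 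Either route works.)
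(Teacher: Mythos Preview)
Your proof is correct and follows essentially the same approach as the paper: reduce to positive measures via the Jordan decomposition, observe that the sequence $(A_{n_k} m_{n_k})_k$ is consistent and positive by~\eqref{eq:Ank compatible}, and then produce a measure on $\varprojlim E^{<n_k}$ realising it. The only difference is that the paper invokes Choksi's theorem on inverse limits of measure spaces \cite[Theorem~2.2]{Choksi:PLMS58} at the last step, whereas you build a positive functional on locally constant functions and apply the Riesz representation theorem; these are interchangeable here since $X$ is compact and totally disconnected.
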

\begin{proof}
Fix $(m_1, m_2, \dots) \in \varprojlim \Mm(E^{<n_k})$. By Lemma~\ref{lem:Ank system} we
have $p^*_{n_k, n_{k-1}}(A_{n_k}(m_{n_k})) = A_{n_{k-1}}(p^*_{n_k, n_{k-1}})(m_{n_k}) =
A_{n_k-1}m_{n_{k-1}}$, so $(A_{n_1} m_1, A_{n_2} m_2, \dots) \in \varprojlim
\Mm(E^{<n_k})$. The universal property of $\varprojlim \Mm(E^{<n_k})$ gives a continuous
map $A_\omega : \varprojlim \Mm(E^{<n_k}) \to \varprojlim \Mm(E^{<n_k})$ satisfying
$A_\omega m = (A_{n_1} m_1, A_{n_2} m_2, \dots)$. It is clear that $A_\omega$ is linear.

By Lemma~\ref{lem:Ank system}, we have $p^*_{n_k, n_{k-1}}(A_{n_k}m_{n_k}^+) =
A_{n_{k-1}} m_{n_{k-1}}^+$. So by \cite[Theorem~2.2]{Choksi:PLMS58}, there is a positive
Borel measure $M^+$ on $\varprojlim E^{<n_k}$ such that $M^+(Z(\mu,k)) =
(A_{n_k}m_{n_k}^+)(\{\mu\})$ for all $k \in \NN$ and $\mu \in E^{<n_k}$. Similarly, there
is a positive Borel measure $M^-$ on $\varprojlim E^{<n_k}$ such that $M^-(Z(\mu,k)) =
(A_{n_k}m_{n_k}^-)(\{\mu\})$ for $\mu \in E^{<n_k}$. Now $A_\omega\iota_\omega(m) =
\iota_\omega(M^+ - M^-)$ belongs to the range of $\iota_\omega$.
\end{proof}

For calculations later, we will want to understand the transformation $A_\omega$ in terms
of the measures of cylinder sets.

\begin{lem}\label{lem:Aomega on cylinders}
Let $E$ be a finite directed graph with no sources, and take a sequence $\omega =
(n_k)^\infty_{k=1}$ of nonzero positive integers such that $n_k \mid n_{k+1}$ for all
$k$. For $m \in \Mm(\varprojlim E^{<n_k})$, $k \in \NN$ and $\mu \in E^{<n_k}$, the
transformation $A_\omega$ of Proposition~\ref{prp:Aomega} satisfies
\begin{align*}
(A_\omega m)(Z(\mu,k))
    &= \begin{cases}
        m(Z(\mu_2\dots\mu_{|\mu|}, k)) &\text{ if $\mu \in E^{<n_k}\setminus E^0$} \\
        \sum_{e\nu \in \mu E^{n_k}} m(Z(\nu,k)) &\text{ if $\mu \in E^0$}
    \end{cases}\\
    &= \sum_{\nu \in E^{<n_k}} |\mu E(n_k)^1\nu| m(Z(\mu,k)).
\end{align*}
\end{lem}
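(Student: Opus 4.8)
The plan is to reduce the lemma to the finite-dimensional identity~\eqref{eq:Ank action} of Lemma~\ref{lem:Ank system} via the injection $\iota_\omega$. First I would pin down what ``$A_\omega$ as a transformation of $\Mm(\varprojlim E^{<n_k})$'' means: by Proposition~\ref{prp:Aomega} the transformation $A_\omega$ of $\varprojlim\Mm(E^{<n_k})$ carries the range of $\iota_\omega$ into itself, and since $\iota_\omega$ is injective there is, for each $m \in \Mm(\varprojlim E^{<n_k})$, a unique element $A_\omega m \in \Mm(\varprojlim E^{<n_k})$ with $\iota_\omega(A_\omega m) = A_\omega\iota_\omega(m)$. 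Writing $m_{n_k} := \iota_\omega(m)_k$ as in the paragraph preceding the lemma, so that $m_{n_k}(\{\lambda\}) = m(Z(\lambda,k))$ for all $\lambda \in E^{<n_k}$, the definition of $A_\omega$ on $\varprojlim\Mm(E^{<n_k})$ gives $\iota_\omega(A_\omega m)_k = A_{n_k}m_{n_k}$, and hence
\[
(A_\omega m)(Z(\mu,k)) \;=\; \iota_\omega(A_\omega m)_k(\{\mu\}) \;=\; (A_{n_k}m_{n_k})(\{\mu\})
\]
for every $k \in \NN$ and every $\mu \in E^{<n_k}$.

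Given this, the first displayed equality of the lemma is immediate: apply~\eqref{eq:Ank action} to $m_{n_k} \in \Mm(E^{<n_k})$ to obtain $(A_{n_k}m_{n_k})(\{\mu\}) = m_{n_k}(\{\mu_2\cdots\mu_{|\mu|}\})$ when $\mu \in E^{<n_k}\setminus E^0$ and $(A_{n_k}m_{n_k})(\{\mu\}) = \sum_{e\nu \in \mu E^{n_k}} m_{n_k}(\{\nu\})$ when $\mu \in E^0$, then substitute $m_{n_k}(\{\lambda\}) = m(Z(\lambda,k))$ throughout. For the second equality I would instead expand in Dirac measures, $(A_{n_k}m_{n_k})(\{\mu\}) = \sum_{\nu \in E^{<n_k}} A_{n_k}(\mu,\nu)\,m_{n_k}(\{\nu\})$, and use $A_{n_k}(\mu,\nu) = |\mu E(n_k)^1\nu|$ together with $m_{n_k}(\{\nu\}) = m(Z(\nu,k))$. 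That the resulting sum coincides with the case formula is the same analysis of the edge set $E(n_k)^1$ already carried out in the proof of Lemma~\ref{lem:Ank system}: a path $\mu \in E^{<n_k}\setminus E^0$ has exactly one incoming edge $(\mu_1,\mu_2\cdots\mu_{|\mu|})$ in $E(n_k)$, so only the term $\nu = \mu_2\cdots\mu_{|\mu|}$ survives, while a vertex $\mu \in E^0$ has incoming edges $(e,\nu)$ with $r(e) = \mu$ and $|\nu| = n_k - 1$, of which there are $|\mu E^1 r(\nu)|$ for each such $\nu$ — precisely the number of paths $e\nu \in \mu E^{n_k}$.

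There is no real obstacle here; the only points requiring care are keeping the two incarnations of $A_\omega$ apart — the one on $\varprojlim\Mm(E^{<n_k})$ from Proposition~\ref{prp:Aomega} and the induced one on $\Mm(\varprojlim E^{<n_k})$ appearing in the statement — and recording the identification $m_{n_k}(\{\lambda\}) = m(Z(\lambda,k))$. After that the lemma is a direct transcription of Lemma~\ref{lem:Ank system}.
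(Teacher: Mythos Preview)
Your proposal is correct and follows essentially the same approach as the paper: both reduce the computation to the finite-level identity $(A_\omega m)(Z(\mu,k)) = (A_{n_k}m_{n_k})(\{\mu\})$ and then invoke Lemma~\ref{lem:Ank system}. The paper's proof is a single sentence to this effect, whereas you spell out carefully how the two incarnations of $A_\omega$ are related through $\iota_\omega$ and why the second displayed equality matches the case split; this extra care is appropriate but does not constitute a different method.
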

\begin{proof}
Since $A_\omega m(Z(\mu,k)) = A_\omega m(p^{-1}_{\infty,n_k}(\{\mu\})) =
A_{n_k}m_{n_k}(\{\mu\})$, the result follows from Lemma~\ref{lem:Ank system}.
\end{proof}

We now show that $A_\omega$ admits a positive eigenmeasure and also that the norm of
$A_\omega$, as an operator on the Banach space $\Mm(\varprojlim E^{<n_k})$, is
$\rho(A_E)$. Recall that the unimodular Perron-Frobenius eigenvector of an irreducible
nonnegative matrix $A$ is its unique positive eigenvector with unit 1-norm.

\begin{prp}\label{prp:Aomega eigenmeasure}
Let $E$ be a finite strongly connected directed graph with no sources, and take a
sequence $\omega = (n_k)^\infty_{k=1}$ of nonzero positive integers such that $n_k \mid
n_{k+1}$ for all $k$. Let $x^E$ be the unimodular Perron-Frobenius eigenvector of $A_E$.
The transformation $A_\omega$ of Proposition~\ref{prp:Aomega} admits a positive
eigenmeasure $m$ such that
\begin{equation}\label{eq:PF measure}
m(Z(\mu,k)) = \frac{1}{n_k} \rho(A_E)^{-|\mu|} x^E_{s(\mu)}\quad\text{ for all $\mu \in E^{<n_k}$,}
\end{equation}
and the corresponding eigenvalue is $\rho(A_E)$, and is equal to the operator norm of
$A_\omega$ as a transformation of $\Mm(\varprojlim E^{<n_k})$.
\end{prp}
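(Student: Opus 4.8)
The plan is to build the candidate eigenmeasure directly on cylinder sets from the formula in \eqref{eq:PF measure}, check the eigenvalue equation against the cylinder-set description of $A_\omega$ in Lemma~\ref{lem:Aomega on cylinders}, and then handle the operator norm.

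First I would verify that the prescription in \eqref{eq:PF measure} is consistent with the projective system, i.e.\ that $m(Z(\mu,k)) = \sum_{\nu \in E^{<n_{k+1}},\, [\nu]_{n_k} = \mu} m(Z(\nu,k+1))$ for all $k$ and all $\mu \in E^{<n_k}$. The paths $\nu$ in this sum are exactly the $\mu\lambda$ with $r(\lambda) = s(\mu)$, $|\lambda| \in n_k\NN$ and $|\mu\lambda| < n_{k+1}$, so $|\lambda| = j n_k$ for $0 \le j < n_{k+1}/n_k$; since $\sum_{\lambda \in s(\mu)E^{jn_k}} x^E_{s(\lambda)} = (A_E^{jn_k} x^E)_{s(\mu)} = \rho(A_E)^{jn_k} x^E_{s(\mu)}$, the contribution of each value of $j$ is $\frac{1}{n_{k+1}} \rho(A_E)^{-|\mu|} x^E_{s(\mu)}$, and summing over the $n_{k+1}/n_k$ values of $j$ gives $\frac{1}{n_k} \rho(A_E)^{-|\mu|} x^E_{s(\mu)} = m(Z(\mu,k))$. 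So the values in \eqref{eq:PF measure} define a consistent, finitely additive, nonnegative set function on the basis of compact open cylinders of $\varprojlim E^{<n_k}$; Choksi's theorem \cite[Theorem~2.2]{Choksi:PLMS58}, applied as in the proof of Proposition~\ref{prp:Aomega}, then produces a positive Borel measure $m$ with these cylinder values, and $m \ne 0$ because $m(Z(v,1)) = x^E_v > 0$ for $v \in E^0$.

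Next I would check $A_\omega m = \rho(A_E) m$ by evaluating both sides on each $Z(\mu,k)$ using Lemma~\ref{lem:Aomega on cylinders}. For $\mu \in E^{<n_k} \setminus E^0$, $(A_\omega m)(Z(\mu,k)) = m(Z(\mu_2\cdots\mu_{|\mu|},k)) = \frac{1}{n_k} \rho(A_E)^{-|\mu|+1} x^E_{s(\mu)}$ since $s(\mu_2\cdots\mu_{|\mu|}) = s(\mu)$, which equals $\rho(A_E)\, m(Z(\mu,k))$. For $\mu = v \in E^0$, $(A_\omega m)(Z(v,k)) = \sum_{e\nu \in vE^{n_k}} m(Z(\nu,k)) = \frac{1}{n_k} \rho(A_E)^{-(n_k-1)} \sum_{e \in vE^1} \sum_{\nu \in s(e)E^{n_k-1}} x^E_{s(\nu)}$; applying $\sum_{\nu \in s(e)E^{n_k-1}} x^E_{s(\nu)} = (A_E^{n_k-1}x^E)_{s(e)} = \rho(A_E)^{n_k-1} x^E_{s(e)}$ and then $\sum_{e \in vE^1} x^E_{s(e)} = (A_E x^E)_v = \rho(A_E) x^E_v$ collapses this to $\frac{1}{n_k}\rho(A_E) x^E_v = \rho(A_E)\, m(Z(v,k))$. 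As cylinders generate the Borel $\sigma$-algebra, $A_\omega m = \rho(A_E) m$, so $m$ is a positive eigenmeasure with eigenvalue $\rho(A_E)$.

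Finally, for the operator norm: one inequality is free, since $\|A_\omega m\| = \rho(A_E)\|m\|$ and $\|m\| > 0$ give $\|A_\omega\| \ge \rho(A_E)$. For the reverse inequality I would pass to the finite levels: any $m \in \Mm(\varprojlim E^{<n_k})$ satisfies $\|m\| = \sup_k \|m_{n_k}\|$ with $m_{n_k} = \iota_\omega(m)_k \in \Mm(E^{<n_k})$ (the cylinder partitions at successive levels refine one another and generate the Borel sets), while $(A_\omega m)_{n_k} = A_{E(n_k)} m_{n_k}$ by Proposition~\ref{prp:Aomega}, so $\|A_\omega m\| = \sup_k \|A_{E(n_k)} m_{n_k}\|$; it thus suffices to control each $A_{E(n_k)}$ acting on $\Mm(E^{<n_k})$. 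Here Perron--Frobenius enters: the vector $x^{(k)}_\mu := \rho(A_E)^{-|\mu|} x^E_{s(\mu)}$ (which is $n_k$ times the $k$th component of the eigenmeasure $m$ constructed above) is a strictly positive eigenvector of $A_{E(n_k)}$, whence $\rho(A_{E(n_k)}) = \rho(A_E)$ for every $k$; moreover the Perron--Frobenius data of the matrices $A_{E(n_k)}$ are controlled uniformly in $k$, since each $E(n_k)$ is assembled from the one graph $E$. The crux — and the step I expect to be the main obstacle — is to convert these finite-level estimates into a bound $\|A_\omega m\| \le \rho(A_E)\|m\|$ that is uniform in $k$; granting that, the two inequalities combine to give $\|A_\omega\| = \rho(A_E)$.
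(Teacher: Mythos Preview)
Your construction of the eigenmeasure and the verification of the eigenvalue equation follow the paper's proof essentially step for step: check compatibility of the cylinder values using $A_E^{jn_k} x^E = \rho(A_E)^{jn_k} x^E$, invoke Choksi's theorem to assemble the measure, and then verify $A_\omega m = \rho(A_E)m$ on cylinders by splitting into the cases $|\mu|>0$ and $\mu \in E^0$.

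For the operator-norm upper bound, though, your route departs from the paper's and the obstacle you anticipate is genuine. Controlling each $A_{E(n_k)}$ as an operator on $(\Mm(E^{<n_k}),\|\cdot\|_1)$ cannot yield $\rho(A_E)$: the $\ell^1\!\to\!\ell^1$ norm of a nonnegative matrix is its maximal column sum, and for $A_{E(n_k)}$ the column sum at $\nu \in E^{<n_k}$ is $|E(n_k)^1\nu| = |E^1 r(\nu)|$, which is independent of $k$ but can exceed $\rho(A_E)$. So the identity $\rho(A_{E(n_k)}) = \rho(A_E)$, while true, does not translate into the uniform bound you need, and no refinement of ``Perron--Frobenius data controlled uniformly in $k$'' will close the gap along these lines.

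The paper avoids finite-level operator norms altogether. It takes the Jordan decomposition $m = m^+ - m^-$, uses positivity of $A_\omega$ together with the minimality clause of the Jordan Decomposition Theorem to obtain $(A_\omega m)^\pm \le A_\omega m^\pm$, and hence
\[
\|A_\omega m\| = (A_\omega m)^+(X) + (A_\omega m)^-(X) \le (A_\omega m^+)(X) + (A_\omega m^-)(X).
\]
Because $A_\omega m^\pm$ are positive, their norms are just their total masses, which the paper reads off at level~$1$ as $(A_1 (m^\pm)_1)(E^0)$ and then bounds by $\rho(A_E)\,(m^\pm)_1(E^0) = \rho(A_E)\,m^\pm(X)$; summing gives $\|A_\omega m\|\le\rho(A_E)\|m\|$. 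The idea you are missing is precisely this reduction to positive measures via Jordan decomposition, which trades an operator-norm estimate on signed measures for a total-mass computation on positive ones.
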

\begin{proof}
To see that~\eqref{eq:PF measure} specifies an element $m \in \Mm(\varprojlim E^{<n_k})$,
define measures $m_k$ by $m_k(\{\mu\}) := \frac{1}{n_k}\rho(A_E)^{-|\mu|} x^E_{s(\mu)}$
for $\mu \in E^{<n_k}$. Let $a_k := n_{k+1}/n_k$ for each $k$. Using at the fifth
equality that $A^j_E x^E = \rho(A_E)^j x_E$ for all $j$, we calculate
\begin{align*}
p^*_{n_{k+1}, n_k}(m_{n_{k+1}})(\{\mu\})
    &= \sum_{\tau \in E^{<n_{k+1}}, [\tau]_{n_k} = \mu} \frac{1}{n_{k+1}} \rho(A_E)^{-|\tau|} x^E_{s(\tau)} \\
    &= \sum^{a_k - 1}_{j=0} \frac{1}{n_k}\rho(A_E)^{|\mu|} \sum_{\lambda \in s(\mu)E^{jn_k}} \frac{1}{a_k} \rho(A_E)^{-jn_k} x^E_{s(\lambda)} \\
    &= \sum^{a_k - 1}_{j=0} \frac{1}{n_k}\rho(A_E)^{|\mu|} \frac{1}{a_k} \rho(A_E)^{-jn_k} \sum_{w \in E^0} A_E^{jn_k}(s(\mu),w) x^E_w \\
    &= \sum^{a_k - 1}_{j=0} \frac{1}{n_k}\rho(A_E)^{|\mu|} \frac{1}{a_k} \rho(A_E)^{-jn_k} (A_E^{jn_k} x^E)_{s(\mu)} \\
    &= \sum^{a_k - 1}_{j=0} \frac{1}{n_k}\rho(A_E)^{|\mu|} \frac{1}{a_k} x^E_{s(\mu)}
    = \frac{1}{n_k}\rho(A_E)^{-|\mu|} x^E_{s(\mu)} = m_{n_k}(\{\mu\}).
\end{align*}
Now \cite[Theorem~2.2]{Choksi:PLMS58} implies that there is a positive measure $m$ on
$\varprojlim E^{<n_k}$ satisfying~\eqref{eq:PF measure}.

To see that $m$ is an eigenmeasure for $A_\omega$ with eigenvalue $\rho(A_E)$, observe
that for $\mu \in E^{<n_k} \setminus E^0$, we have
\[
(A_\omega m)(Z(\mu, k))
    = m(Z(\mu_2\dots\mu_{|\mu|}, k))
    = \frac{1}{n_k} \rho(A_E)^{-|\mu|+1} x^E_{s(\mu)}
    =\rho(A_E) m(Z(\mu,k)),
\]
and for $v \in E^0$, we have
\begin{align*}
(A_\omega m)(Z(v,k))
    &= \sum_{e \in vE^1, \tau \in s(e)E^{n_k-1}} \frac{1}{n_k}\rho(A_E)^{-|\tau|} x^E_{s(\tau)}
    = \frac{1}{n_k}\sum_{w \in E^0} \sum_{\lambda \in vE^{n_k} w} \rho(A_E)^{-|\lambda|+1} x^E_w \\
    &= \frac{1}{n_k}(A^{n_k}\rho(A_E)^{-n_k+1} x^E_w)_v
    = \frac{1}{n_k}\rho(A_E)x^E_v.
\end{align*}
So $m$ is an eigenmeasure for $A_\omega$ with corresponding eigenvalue $\rho(A_E)$. It
follows immediately that $\|A_\omega\| \ge \rho(A_E)$. For the reverse inequality, take
$m \in \Mm(\varprojlim E^{<n_k})$ and consider its Jordan decomposition $m = m^+ - m^-$.
Since $A_\omega$ is linear, we have $A_\omega m^+ - A_\omega m^- = A_\omega m$, and since
the $A_{n_k}$ are positive matrices, the measures $A_\omega m^{\pm}$ are positive
measures. So the Jordan Decomposition Theorem implies that $A_\omega m^+ \ge (A_\omega
m)^+$ and $A_\omega m^- \ge (A_\omega m)^-$. So
\begin{align*}
\|A_\omega\|
    &= \sup_{\|m\| = 1} \|A_\omega m\|
     = \sup_{\|m\| = 1} \big((A_\omega m)^+(\varprojlim E^{< n_k}) + (A_\omega m)^-(\varprojlim E^{< n_k})\big)\\
    &\le \sup_{\|m\| = 1} \big((A_\omega m^+)(\varprojlim E^{< n_k}) + (A_\omega m^-)(\varprojlim E^{< n_k})\big)\\
    &= \sup_{\|m\| = 1} \big((A_1 m_1^+)(E^0) + (A_1 m_1^-)(E^0)\big)
     \le \sup_{\|m\| = 1} \big(\rho(A_E) m_1^+(E^0) + \rho(A_E)m_1^-(E^0)\big)\\
    &= \rho(A_E) \sup_{\|m\| = 1}\big(m^+(\varprojlim E^{< n_k}) +  m^-(\varprojlim E^{< n_k})\big)
     = \rho(A_E).\qedhere
\end{align*}
\end{proof}

We now show that if $E$ is strongly connected and $\gcd(\Pp_E, \omega) = 1$, then the
measure $m$ of the preceding proposition is the only positive probability measure that is
an eigenmeasure for the transformation $A_\omega$.

\begin{lem}\label{lem:uniquePF}
Let $E$ be a finite strongly connected directed graph with no sources, and take a
sequence $\omega = (n_k)^\infty_{k=1}$ of nonzero positive integers such that $n_k
\mid n_{k+1}$ for all $k$. Let $m$ be the measure of Proposition~\ref{prp:Aomega
eigenmeasure}, and fix $k$ such that $\gcd(\Pp_E, n_k)  = \gcd(\Pp_E, \omega)$.
\begin{enumerate}
\item\label{it:mLambdas} Let $\sim_{n_k}$ be the equivalence relation on $E^0$ of
    Lemma~\ref{lem:Cn}. For $\Lambda \in E^0/{\sim_{n_k}}$, let $X_\Lambda =
    \bigcup_{\mu \in E^{<n_k}, s(\mu) \in \Lambda} Z(\mu, k) \subseteq
    \varprojlim E^{<n_k}$, and define $m^\Lambda \in \Mm(\varprojlim E^{<n_k})$
    by $m^\Lambda(U) := \frac{1}{m(X_\Lambda)} m(U \cap X_\Lambda)$. Then each
    $m^\Lambda$ is a normalised eigenmeasure for $A_\omega$ with eigenvalue
    $\rho(A_E)$.
\item\label{it:radii} For each $l \ge k$, and for each $\Lambda \in
    E^0/{\sim_{n_k}}$, the block $A^\Lambda_{n_l} \in M_{E^{<n_l}\Lambda}(\ZZ)$ of
    $A_{n_l}$ is an irreducible matrix. We have $\rho(A^\Lambda_{n_l}) = \rho(A_E)$
    and $m^\Lambda_{n_l} = (m^\Lambda(Z(\mu, l)))_{\mu \in E^{<n_l}}$ is the
    unimodular Perron--Frobenius eigenvector of $A^\Lambda_{n_l}$.
\item\label{it:unique} Every positive eigenmeasure for $A_\omega$ is a convex
    combination of the $m^\Lambda$.
\end{enumerate}
\end{lem}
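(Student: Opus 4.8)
The plan is to treat the three items in turn; Item~(\ref{it:mLambdas}) carries the main load. Two book-keeping facts come first. Because $k$ is chosen with $\gcd(\Pp_E,n_k)=\gcd(\Pp_E,\omega)$, for all $l\ge k$ we have $\gcd(\Pp_E,n_l)=\gcd(\Pp_E,n_k)$, hence $\sim_{n_l}=\sim_{n_k}$; and for such $l$ a point $x\in\varprojlim E^{<n_k}$ lies in $X_\Lambda$ iff $s(x_l)\in\Lambda$, since writing $x_l=x_k y$ with $|y|\in n_k\NN$ gives $C_{n_k}(s(x_k),s(x_l))=|y|+\gcd(\Pp_E,n_k)\ZZ=0$. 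Thus $X_\Lambda=\bigsqcup_{\nu\in E^{<n_l},\,s(\nu)\in\Lambda}Z(\nu,l)$ for all $l\ge k$. The key claim is that for \emph{any} $m\in\Mm(\varprojlim E^{<n_k})$ and any $\Lambda$, the restriction $m_\Lambda:=m(\,\cdot\,\cap X_\Lambda)$ satisfies $A_\omega m_\Lambda=(A_\omega m)(\,\cdot\,\cap X_\Lambda)$: since the cylinders $Z(\nu,l)$ with $l\ge k$ generate the topology it suffices to check this on them, and Lemma~\ref{lem:Aomega on cylinders} shows $(A_\omega m)(Z(\nu,l))$ is computed from $m$ either at $Z(\nu_2\cdots\nu_{|\nu|},l)$ (when $\nu\notin E^0$, and then $s(\nu_2\cdots\nu_{|\nu|})=s(\nu)$) or as a sum over $e\eta\in vE^{n_l}$ of $m(Z(\eta,l))$ (when $\nu=v\in E^0$, and then $C_{n_k}(v,s(\eta))=n_l+\gcd(\Pp_E,n_k)\ZZ=0$, so $v\sim_{n_k}s(\eta)$) --- in both cases the cylinders contributing to $(A_\omega m)(Z(\nu,l))$ lie in $X_\Lambda$ exactly when $Z(\nu,l)$ does. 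Granting this, since $m$ is an $A_\omega$-eigenmeasure with eigenvalue $\rho(A_E)$ (Proposition~\ref{prp:Aomega eigenmeasure}) and $m(X_\Lambda)\ge m(Z(v,k))=\frac{1}{n_k}x^E_v>0$ for $v\in\Lambda$, $m^\Lambda=\frac{1}{m(X_\Lambda)}m_\Lambda$ is a probability measure with $A_\omega m^\Lambda=\rho(A_E)m^\Lambda$.

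For Item~(\ref{it:radii}), fix $l\ge k$. By Proposition~\ref{prp:components} (valid since $\sim_{n_l}=\sim_{n_k}$) the components of $E(n_l)$ are the strongly connected subgraphs on the sets $E^{<n_l}\Lambda=\{\mu\in E^{<n_l}:s(\mu)\in\Lambda\}$, so $A_{n_l}$ is block diagonal with irreducible blocks $A^\Lambda_{n_l}$. Evaluating $A_\omega m^\Lambda=\rho(A_E)m^\Lambda$ in the $n_l$-th coordinate gives $A_{n_l}m^\Lambda_{n_l}=\rho(A_E)m^\Lambda_{n_l}$; as $m^\Lambda_{n_l}$ is supported on $E^{<n_l}\Lambda$ and strictly positive there, restricting to that block gives a strictly positive eigenvector of the irreducible matrix $A^\Lambda_{n_l}$. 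By Perron--Frobenius the only eigenvalue of an irreducible nonnegative matrix admitting a positive eigenvector is its spectral radius, so $\rho(A^\Lambda_{n_l})=\rho(A_E)$ and $m^\Lambda_{n_l}$ is a positive multiple of the unimodular Perron--Frobenius eigenvector; since $\|m^\Lambda_{n_l}\|_1=\sum_{\mu\in E^{<n_l}\Lambda}m^\Lambda(Z(\mu,l))=m^\Lambda(X_\Lambda)=1$, it is that eigenvector.

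For Item~(\ref{it:unique}), let $M$ be a positive eigenmeasure, $A_\omega M=\lambda M$. Since the $X_\Lambda$ partition $\varprojlim E^{<n_k}$ (Lemma~\ref{lem:invariantsubset}), $M=\sum_\Lambda M(\,\cdot\,\cap X_\Lambda)$, and by the commutation claim each $M(\,\cdot\,\cap X_\Lambda)$ is an $A_\omega$-eigenmeasure for $\lambda$. Choosing $\Lambda$ with $M(X_\Lambda)>0$ and restricting to the $\Lambda$-block in coordinate $n_l$ gives a positive eigenvector of $A^\Lambda_{n_l}$, which by Item~(\ref{it:radii}) forces $\lambda=\rho(A_E)$ and $M^\Lambda_{n_l}=\|M^\Lambda_{n_l}\|_1\,m^\Lambda_{n_l}=M(X_\Lambda)\,m^\Lambda_{n_l}$ for all $l\ge k$; comparing on the generating cylinders yields $M(\,\cdot\,\cap X_\Lambda)=M(X_\Lambda)\,m^\Lambda$ (trivially also when $M(X_\Lambda)=0$). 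Summing, $M=\sum_\Lambda M(X_\Lambda)\,m^\Lambda$ with $\sum_\Lambda M(X_\Lambda)=M(\varprojlim E^{<n_k})$, which is a convex combination once $M$ is normalised.

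I expect the one delicate point to be the combinatorial verification in Item~(\ref{it:mLambdas}) that $A_\omega$ respects the $X_\Lambda$-decomposition --- tracking source-classes through the two cases of Lemma~\ref{lem:Aomega on cylinders} and confirming $\sim_{n_l}=\sim_{n_k}$ for $l\ge k$ so one may argue uniformly across levels --- after which Items~(\ref{it:radii}) and~(\ref{it:unique}) are routine applications of the Perron--Frobenius theorem and straightforward assembly.
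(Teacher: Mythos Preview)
Your proof is correct and follows essentially the same route as the paper. The one noteworthy difference is in Item~(\ref{it:mLambdas}): where the paper simply invokes Lemma~\ref{lem:invariantsubset} to assert that each $\Mm(X_\Lambda)$ is $A_\omega$-invariant, you instead verify directly via Lemma~\ref{lem:Aomega on cylinders} that restriction to $X_\Lambda$ commutes with $A_\omega$ by tracking source-classes through the two cases --- this is the same fact unwound, and your explicit bookkeeping that $\sim_{n_l}=\sim_{n_k}$ and $X_\Lambda=\bigsqcup_{\nu\in E^{<n_l}\Lambda}Z(\nu,l)$ for all $l\ge k$ makes the argument self-contained.
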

\begin{proof}
(\ref{it:mLambdas}) Proposition~\ref{prp:Aomega eigenmeasure} shows that $m$ is an
eigenmeasure with $A_\omega m = \rho(A_E) m$. Lemma~\ref{lem:invariantsubset} shows that
each $\Mm(X_\Lambda) \subseteq \Mm(\varprojlim E^{<n_k})$ is invariant for $A_\omega$,
and it follows that $A_\omega m^\Lambda = \rho(A_E) m^\Lambda$ for each $\Lambda$.

(\ref{it:radii}) For each $l \ge k$ and each $\Lambda \in E^0 / \sim_{n_l}$, the matrix
$A^\Lambda_{n_l}$ is irreducible by Proposition~\ref{prp:components}. By definition of
$A_\omega$, we have $A_\omega \chi_{Z(\mu,l)} = \sum_\nu A^\Lambda_{n_l}(\nu,\mu)
\chi_{Z(\nu,l)}$, and so (\ref{it:mLambdas}) shows that $A^\Lambda_{n_l}
m^{\Lambda}_{n_l} = \rho(A_E)m^\Lambda_{n_l}$. The Perron-Frobenius theorem
\cite[Theorem~1.5]{Seneta:NNMMC} implies that every entry of the Perron--Frobenius
eigenvector of the irreducible matrix $A_E$ is nonzero, and so \eqref{eq:PF
measure} shows that $m^\Lambda_{n_l}$ is the unimodular Perron--Frobenius eigenvector of
$A^\Lambda_{n_l}$, and so its eigenvalue $\rho(A_E)$ is equal to
$\rho(A^\Lambda_{n_l})$.

(\ref{it:unique}) Suppose that $m' \in \Mm^+(\varprojlim E^{<n_k})$ and $z \in \CC$
satisfy $A_\omega m' = zm'$. Then in particular $A_{n_l} (m')^\Lambda_{n_l} =
(zm')^\Lambda_{n_l}$ for each $l \ge k$ and $\Lambda \in E^0/{\sim_{n_l}}$. Since each
$A^\Lambda_{n_l}$ is irreducible, this forces $z = \rho(A_{n_l}) = \rho(A_E)$, and
$(m')^\Lambda_{n_k}$ is a scalar multiple of $m^\Lambda_{n_l}$, so $m' = \sum_\Lambda
t_\Lambda m^\Lambda_{n_l}$. Since the supports of the $m^\Lambda_{n_l}$ are disjoint and
$m'$ is positive, the $t_\Lambda$ are positive, and their sum is $1$ because $m'$ and the
$m^\Lambda_{n_l}$ are normalised. Since this is true for all $l$, continuity implies
that $m' = \sum_\Lambda t_\Lambda m^\Lambda$.
\end{proof}

\begin{lem}\label{lem:subinvariance}
Let $E$ be a finite strongly connected directed graph with no sources, and take a
sequence $\omega = (n_k)^\infty_{k=1}$ of nonzero positive integers such that $n_k \mid
n_{k+1}$ for all $k$. Suppose that $s > 0$ and $m \in \Mm^+(\varprojlim E^{<n_k})$
satisfy $A_\omega m \le s m$. Then $s \ge \rho(A_E)$. Moreover, $s = \rho(A_E)$ if and
only if $A_\omega m = sm$.
\end{lem}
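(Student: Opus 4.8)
The plan is to convert the subinvariance inequality for the measure operator $A_\omega$ into finitely many subinvariance inequalities for irreducible nonnegative matrices and then invoke Perron--Frobenius theory. First I would fix, as in Lemma~\ref{lem:uniquePF}, an index $k$ with $\gcd(\Pp_E,n_k)=\gcd(\Pp_E,\omega)$. Then $\gcd(\Pp_E,n_l)=\gcd(\Pp_E,\omega)$ for every $l\ge k$, and since by Lemma~\ref{lem:Cn} the relation $\sim_{n_l}$ depends on $l$ only through $\gcd(\Pp_E,n_l)$, the relations $\sim_{n_l}$ coincide for all $l\ge k$; write $E^0/{\sim_{n_k}}$ for the common quotient. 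By Proposition~\ref{prp:components}, for each such $l$ the matrix $A_{n_l}$ is block-diagonal for the partition $E^{<n_l}=\bigsqcup_{\Lambda}\{\mu:s(\mu)\in\Lambda\}$, each block $A^{\Lambda}_{n_l}$ is irreducible, and by Lemma~\ref{lem:uniquePF}(\ref{it:radii}) we have $\rho(A^{\Lambda}_{n_l})=\rho(A_E)$. Writing $m_{n_l}=\iota_\omega(m)_l$ and, for each $\Lambda$, $m^{\Lambda}_{n_l}$ for the restriction of $m_{n_l}$ to the coordinates $\mu$ with $s(\mu)\in\Lambda$ (a nonnegative vector, since $m\ge 0$), Lemma~\ref{lem:Aomega on cylinders} shows that $(A_\omega m)(Z(\mu,l))=(A_{n_l}m_{n_l})(\{\mu\})$, so the hypothesis $A_\omega m\le sm$ gives
\[
A^{\Lambda}_{n_l}\,m^{\Lambda}_{n_l} \le s\,m^{\Lambda}_{n_l}\qquad\text{for all }l\ge k\text{ and all }\Lambda\in E^0/{\sim_{n_k}}.
\]

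The matrix ingredient I would use is elementary: if $B$ is an irreducible nonnegative matrix with strictly positive left Perron--Frobenius eigenvector $w$ (so $w^{\mathsf{T}}B=\rho(B)w^{\mathsf{T}}$, which exists by applying Perron--Frobenius to $B^{\mathsf{T}}$) and $v\ge 0$ is nonzero with $Bv\le sv$, then $\rho(B)\,w^{\mathsf{T}}v=w^{\mathsf{T}}(Bv)\le s\,w^{\mathsf{T}}v$ and $w^{\mathsf{T}}v>0$, so $s\ge\rho(B)$; moreover if $s=\rho(B)$ then $w^{\mathsf{T}}(sv-Bv)=0$ with $sv-Bv\ge 0$ and $w>0$, forcing $Bv=sv$, and conversely if $Bv=sv$ then the same pairing forces $s=\rho(B)$. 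Since $m\neq 0$, some $m^{\Lambda_0}_{n_k}$ is nonzero; applying this fact to $B=A^{\Lambda_0}_{n_k}$ and $v=m^{\Lambda_0}_{n_k}$ gives $s\ge\rho(A^{\Lambda_0}_{n_k})=\rho(A_E)$, which is the first assertion.

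For the final claim, the implication $A_\omega m=sm\Rightarrow s=\rho(A_E)$ follows by restricting to level $k$: then $A^{\Lambda_0}_{n_k}m^{\Lambda_0}_{n_k}=sm^{\Lambda_0}_{n_k}$ with $m^{\Lambda_0}_{n_k}$ a nonzero nonnegative eigenvector of the irreducible matrix $A^{\Lambda_0}_{n_k}$, so $s=\rho(A^{\Lambda_0}_{n_k})=\rho(A_E)$ by the matrix ingredient (this is also implicit in Lemma~\ref{lem:uniquePF}(\ref{it:unique})). Conversely, suppose $s=\rho(A_E)$. For each $l\ge k$ and each $\Lambda$, either $m^{\Lambda}_{n_l}=0$, in which case $A^{\Lambda}_{n_l}m^{\Lambda}_{n_l}=sm^{\Lambda}_{n_l}$ trivially, or $m^{\Lambda}_{n_l}\neq 0$, in which case the equality case of the matrix ingredient applies (since $\rho(A^{\Lambda}_{n_l})=\rho(A_E)=s$) and gives $A^{\Lambda}_{n_l}m^{\Lambda}_{n_l}=sm^{\Lambda}_{n_l}$. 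Reassembling the blocks, $A_{n_l}m_{n_l}=sm_{n_l}$, i.e.\ $(A_\omega m)(Z(\mu,l))=s\,m(Z(\mu,l))$ for all $l\ge k$ and $\mu\in E^{<n_l}$; for $l<k$ the same identity follows by applying $p^*_{n_k,n_l}$ and iterating \eqref{eq:Ank compatible}. Thus the finite positive measures $A_\omega m$ and $sm$ agree on every cylinder set, and since the cylinder sets form a generating $\pi$-system for the Borel $\sigma$-algebra of the compact totally disconnected space $\varprojlim E^{<n_k}$ and both measures are finite, $A_\omega m=sm$.

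The step I expect to be the main obstacle is the forward direction of the last claim: one needs the decomposition of $A_{n_l}$ into irreducible blocks of spectral radius $\rho(A_E)$ to be available simultaneously for every $l\ge k$ (which is exactly why $k$ must be taken past the point where $\gcd(\Pp_E,n_k)$ has stabilised, so that $\sim_{n_l}=\sim_{n_k}$ for all $l\ge k$), and then one must patch the blockwise equalities together across all those levels, push them down to the finitely many smaller levels through \eqref{eq:Ank compatible}, and finally upgrade an agreement on all cylinders to an identity of Borel measures. The remaining steps are routine Perron--Frobenius theory together with the dictionary between $A_\omega$ and the matrices $A_{n_l}$ already established in Lemmas~\ref{lem:Ank system} and~\ref{lem:Aomega on cylinders}.
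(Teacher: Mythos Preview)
Your proof is correct and follows essentially the same route as the paper's: reduce the subinvariance for $A_\omega$ to subinvariance for the finite irreducible blocks $A^{\Lambda}_{n_l}$ and invoke Perron--Frobenius. The differences are cosmetic: the paper obtains $s\ge\rho(A_E)$ more directly by restricting to level~$1$ and citing Seneta's subinvariance theorem for the irreducible matrix $A_E$, whereas you work at level~$k$ and supply the left-eigenvector argument yourself; and you are more explicit than the paper about handling the levels $l<k$ via \eqref{eq:Ank compatible} and about passing from equality on cylinders to equality of measures.
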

\begin{proof}
Since $A_\omega m \le sm$, we have $A_E m_1 \le s m_1$, and since $A_E$ is irreducible,
the subinvariance theorem \cite[Theorem~1.6]{Seneta:NNMMC} implies that $s \ge
\rho(A_E)$.

Suppose that $s = \rho(A_E)$. For $k$ such that $\gcd(\Pp_E, n_k) = \gcd(\Pp_E, \omega)$,
the matrix $A^\Lambda_{n_k}$ is irreducible by Proposition~\ref{prp:components}, so the
forward implication of the last assertion of \cite[Theorem~1.6]{Seneta:NNMMC} implies
that $A^\Lambda_{n_k} m_{n_k} = \rho(A^\Lambda_{n_k}) m_{n_k}$. Since
$\rho(A^\Lambda_{n_k}) = \rho(A_E)$ for all $k$ by part~(\ref{it:radii}) of
Lemma~\ref{lem:uniquePF}, we deduce that $A_{n_k} m_{n_k} = \rho(A_E) m_{n_k}$ for all
$k$. So $A_\omega m = \rho(A_E)m$.

Now suppose that $A_\omega m = sm$. Then part~(\ref{it:unique}) of
Lemma~\ref{lem:uniquePF} gives $s = \rho(A_E)$.
\end{proof}

\subsection{Characterising KMS states} We characterise the KMS$_\beta$-states for the
gauge action on $\Tt(E, \omega)$ in terms of their values at spanning elements $t_\mu
\pi_{(\alpha, k)} t^*_\nu$. We describe a subinvariance condition on the measure $m^\phi$
on $\varprojlim E^{<n_k}$ induced by a KMS state $\phi$. We also show that a KMS state
factors through $C^*(E, \omega)$ if and only if this subinvariance condition is
invariance. Our approach follows the general program of \cite{LacaRaeburn:AM10}, but is
by now quite streamlined.

\begin{thm}\label{thm:KMSchar}
Let $E$ be a finite directed graph with no sources, and take a sequence $\omega =
(n_k)^\infty_{k=1}$ of nonzero positive integers such that $n_k \mid n_{k+1}$ for all
$k$. Let $\alpha : \RR \to \Aut \Tt(E, \omega)$ be given by $\alpha_t = \gamma_{e^{it}}$.
Let $\beta \in \RR$.
\begin{enumerate}
\item\label{it:KMS cond} A state $\phi$ of $\Tt(E,\omega)$ is a KMS$_\beta$ state for
    $\alpha$ if and only if
    \begin{equation}\label{eq:KMS on spanner}
        \phi(t_\mu \pi_{(\tau,k)} t^*_\nu) = \delta_{\mu,\nu} e^{-\beta|\mu|} \phi(\pi_{(\tau,k)})
    \end{equation}
    for all $k \in \NN$, all $\tau \in E^{<n_k}$ and all $\mu,\nu \in E^*r(\tau)$.
\item\label{it:KMS subinvariance} Suppose that $\phi$ is a KMS$_\beta$ state for
    $(\Tt(E, \omega), \alpha)$, and let $m^\phi$ be the measure on $\varprojlim
    E^{<n_k}$ such that $m^\phi(Z(\mu,k)) = \phi(\pi_{(\mu,k)})$ for $\mu \in
    E^{<n_k}$. Then $m^\phi$ is a probability measure and satisfies the subinvariance
    relation $A_\omega m^\phi \le e^\beta m^\phi$.
\item\label{it:KMS factor} A KMS$_\beta$ state $\phi$ of $(\Tt(E, \omega), \alpha)$
    factors through $C^*(E, \omega)$ if and only if $A_\omega m^\phi = e^\beta
    m^\phi$.
\end{enumerate}
\end{thm}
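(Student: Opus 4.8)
The plan is to prove the three parts in order; part~(\ref{it:KMS cond}) carries the technical weight, and parts~(\ref{it:KMS subinvariance})--(\ref{it:KMS factor}) are read off from it together with the transformation lemmas of the previous subsection. Throughout I would use that the spanning elements $t_\mu\pi_{(\tau,k)}t_\nu^*$ of Lemma~\ref{lem:omega mpctn} are analytic, with $\alpha_z(t_\mu\pi_{(\tau,k)}t_\nu^*) = e^{iz(|\mu|-|\nu|)}\,t_\mu\pi_{(\tau,k)}t_\nu^*$, and span a dense $\alpha$-invariant subspace, so that the KMS condition need only be checked on them.

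For part~(\ref{it:KMS cond}): a KMS$_\beta$ state is $\alpha$-invariant (\cite[Proposition~5.3.3]{BratteliRobinson:OAQSMvII}), so $\phi(t_\mu\pi_{(\tau,k)}t_\nu^*) = 0$ whenever $|\mu|\ne|\nu|$; when $|\mu|=|\nu|$, applying the KMS condition with $a = t_\mu$ (so $\alpha_{i\beta}(t_\mu) = e^{-\beta|\mu|}t_\mu$) and $b = \pi_{(\tau,k)}t_\nu^*$ gives $\phi(t_\mu\pi_{(\tau,k)}t_\nu^*) = e^{-\beta|\mu|}\phi(\pi_{(\tau,k)}t_\nu^*t_\mu)$, and $t_\nu^*t_\mu = \delta_{\mu,\nu}q_{r(\tau)}$ together with $\pi_{(\tau,k)}q_{r(\tau)}=\pi_{(\tau,k)}$ yields \eqref{eq:KMS on spanner}. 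Conversely, \eqref{eq:KMS on spanner} already forces $\alpha$-invariance, and one then verifies $\phi(xy) = \phi(y\,\alpha_{i\beta}(x))$ for $x,y$ in the spanning family: after reducing to a common level $k$ via $Z(\rho,l) = \bigsqcup_{[\lambda]_{n_l}=\rho}Z(\lambda,k)$, one expands $xy$ and $yx$ with the multiplication formula of Lemma~\ref{lem:omega mpctn} and applies \eqref{eq:KMS on spanner} term by term, so that each surviving case on both sides collapses to the same scalar multiple of a single $\phi(\pi_{(\eta,k)})$, with the exponents matching because the formula tracks path lengths. This case-by-case matching is the main obstacle: it is routine, but it is the step that demands real care.

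For part~(\ref{it:KMS subinvariance}): since $E$ is finite, $\varprojlim E^{<n_k}$ is compact and $\pi : C(\varprojlim E^{<n_k})\to\Tt(E,\omega)$ is unital (the identity is $\sum_{\mu\in E^{<n_k}}\pi_{(\mu,k)}$ for any $k$), so $\phi\circ\pi$ is a state on $C(\varprojlim E^{<n_k})$, hence integration against a Borel probability measure $m^\phi$ with $m^\phi(Z(\mu,k)) = \phi(\pi_{(\mu,k)})$ (one may instead invoke \cite[Theorem~2.2]{Choksi:PLMS58} after checking $\pi_{(\mu,k)} = \sum_{[\lambda]_{n_k}=\mu}\pi_{(\lambda,l)}$). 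For the subinvariance relation I would read $A_\omega$ off Lemma~\ref{lem:Aomega on cylinders}. For $\mu = e\mu'\in E^{<n_k}\setminus E^0$, the relation in Definition~\ref{def:omega-rep} gives $\pi_{(\mu,k)}t_e = t_e\pi_{(\mu',k)}$, so $t_e\pi_{(\mu',k)}t_e^* = \pi_{(\mu,k)}t_et_e^*$ is self-adjoint, hence commutes with $\pi_{(\mu,k)}$ and is a subprojection of it; combined with \eqref{eq:KMS on spanner} this gives $(A_\omega m^\phi)(Z(\mu,k)) = \phi(\pi_{(\mu',k)}) = e^\beta\phi(t_e\pi_{(\mu',k)}t_e^*)\le e^\beta\phi(\pi_{(\mu,k)}) = e^\beta m^\phi(Z(\mu,k))$. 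For $\mu = v\in E^0$, summing $t_et_e^*\pi_{(v,k)} = \sum_{\lambda\in s(e)E^{n_k-1}}t_e\pi_{(\lambda,k)}t_e^*$ over $e\in vE^1$ shows $\sum_{e\nu\in vE^{n_k}}t_e\pi_{(\nu,k)}t_e^* = \big(\sum_{e\in vE^1}t_et_e^*\big)\pi_{(v,k)}\le\pi_{(v,k)}$, and the same computation closes the case.

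For part~(\ref{it:KMS factor}): $C^*(E,\omega)$ is the quotient of $\Tt(E,\omega)$ by the ideal $I$ generated by the gap projections $g_v := q_v - \sum_{e\in vE^1}t_et_e^*$, and (exactly as in the proof of Proposition~\ref{prp:coburn}) for every $\mu\in E^{<n_k}$ one has $\pi_{(\mu,k)} - \sum_{(e,\nu)\in\mu E(n_k)^1}t_e\pi_{(\nu,k)}t_e^* = g_{r(\mu)}\pi_{(\mu,k)}$, a subprojection of $\pi_{(\mu,k)}$ which, via \eqref{eq:KMS on spanner}, is precisely the slack in the subinvariance inequality for $Z(\mu,k)$ from part~(\ref{it:KMS subinvariance}). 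Hence $A_\omega m^\phi = e^\beta m^\phi$ if and only if $\phi(g_{r(\mu)}\pi_{(\mu,k)}) = 0$ for all $\mu,k$; taking $\mu = v$ at level $1$, where $\pi_{(v,1)} = q_v$, this is equivalent to $\phi(g_v) = 0$ for all $v$, while conversely $\phi(g_v) = 0$ for all $v$ makes each $g_{r(\mu)}\pi_{(\mu,k)}\in I$ automatically $\phi$-null. It remains to pass from ``$\phi(g_v) = 0$ for all $v$'' to ``$\phi$ vanishes on $I$'', hence factors through $C^*(E,\omega)$: each $g_v$ is $\alpha$-fixed, so analytic, and the KMS condition gives $\phi(ag_vb) = \phi(b\,\alpha_{i\beta}(a)\,g_v)$ for analytic $a$, which vanishes because Cauchy--Schwarz and $\phi(g_v) = 0$ force $\phi(xg_v) = 0$ for every $x$; density of the analytic elements then gives $\phi(I) = 0$. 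This last step is the one to watch: positivity of $\phi$ alone does not promote vanishing on the generators of $I$ to vanishing on $I$, and it is the KMS condition, through analyticity of the $g_v$, that makes it work.
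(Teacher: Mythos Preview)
Your proposal is correct and follows essentially the same route as the paper. The only difference is packaging: where you spell out the converse of part~(\ref{it:KMS cond}) via a case-by-case verification of the KMS identity on spanning elements, and where you derive ``$\phi(g_v)=0$ for all $v$ $\Rightarrow$ $\phi|_I=0$'' in part~(\ref{it:KMS factor}) directly from analyticity of the $g_v$ and Cauchy--Schwarz, the paper simply cites \cite[Proposition~2.1(a)]{anHuefLacaEtAl:JMAA2013} and \cite[Lemma~2.2]{anHuefLacaEtAl:JMAA2013} respectively for exactly those arguments; your computation in part~(\ref{it:KMS subinvariance}) is the paper's calculation~\eqref{eq:halfway} organised by the two cases of Lemma~\ref{lem:Aomega on cylinders} rather than unified. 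One small point: your identification $\pi_{(v,1)}=q_v$ uses $n_1=1$, which the paper assumes without loss of generality (via Proposition~\ref{prp:equiv omegas}); alternatively just sum $\phi(g_v\pi_{(\mu,k)})$ over $\mu\in vE^{<n_k}$ to recover $\phi(g_v)$.
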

\begin{proof}
(\ref{it:KMS cond}) Suppose that $\phi$ is KMS. Then $\phi$ is $\alpha$-invariant---by
\cite[Proposition~5.33]{BratteliRobinson:OAQSMvII} if $\beta \not= 0$, or by definition
if $\beta = 0$---and so also $\gamma$-invariant, and then
\[
\phi(t_\mu \pi_{(\tau, k)} t^*_\nu)
    = \int_\TT \phi(\gamma_z(t_\mu \pi_{(\tau, k)} t^*_\nu))\,dz
    = \int_\TT z^{|\mu|-|\nu|}\,dz  \phi(t_\mu \pi_{(\tau, k)} t^*_\nu),
\]
which is zero if $|\mu| \not= |\nu|$. If $|\mu| = |\nu|$, then the KMS condition gives
\[
\phi(t_\mu \pi_{(\tau, k)} t^*_\nu)
    = e^{-\beta|\mu|} \phi(t^*_\nu t_\mu \pi_{(\tau, k)})
    = \delta_{\mu,\nu} e^{-\beta |\mu|} \phi(\pi_{(\tau, k)}).
\]
Now suppose that $\phi$ satisfies~\eqref{eq:KMS on spanner}. Then the argument of
\cite[Proposition~2.1(a)]{anHuefLacaEtAl:JMAA2013} shows that $\phi$ is KMS.

(\ref{it:KMS subinvariance}) We have $m^\phi \ge 0$ because $\phi$ is a state. To see
that $m^\phi$ is a probability measure, just observe that $\phi$ restricts to a state of
$\pi(C_0(\varprojlim E^{<n}))$, and so $m^\phi$ is a probability measure by the Riesz
representation theorem. To see that it satisfies the subinvariance condition, we
calculate:
\begin{align}
\sum_{e \in r(\mu)E^1} \phi(t_e t^*_e \pi_{(\mu,k)})
    &= \sum_{e \in r(\mu)E^1}  e^{-\beta}\phi(t^*_e \pi_{(\mu,k)} t_e) \nonumber\\
    &= e^{-\beta}\begin{cases}
            \phi(\pi_{(\mu_2\dots\mu_{|\mu|}, k)} t^*_{\mu_1} t_{\mu_1}) &\text{ if $\mu \not\in E^0$} \\
            \sum_{e\nu \in r(\nu) E^{n_k}} \phi(\pi_{(\nu, k)} t^*_e t_e) &\text{ if $\mu \in E^0$}
        \end{cases}\nonumber\\
    &= e^{-\beta}\begin{cases}
            m^\phi(Z(\mu_2\dots\mu_{|\mu|}, k)) &\text{ if $\mu \not\in E^0$} \\
            \sum_{e\nu \in r(\nu) E^{n_k}} m^\phi(Z(\nu,k)) &\text{ if $\mu \in E^0$}
        \end{cases}\nonumber\\
    &= e^{-\beta} A_\omega m^\phi(Z(\mu,k))\label{eq:halfway}
\end{align}
by Lemma~\ref{lem:Aomega on cylinders}. Hence each
\[
e^\beta m^\phi(Z(\mu,k))
    = e^\beta \phi(\pi_{(\mu,k)})
    = e^\beta \phi(p_{r(\mu)}\pi_{(\mu,k)})
    \ge \sum_{e \in r(\mu)E^1} e^\beta \phi(t_e t^*_e \pi_{(\mu,k)})
    = A_\omega m^\phi(Z(\mu,k)).
\]

(\ref{it:KMS factor}) Recall that $C^*(E, \omega)$ is the quotient of $\Tt(E, \omega)$ by
the ideal generated by the projections $q_v - \sum_{e \in vE^1} t_e t^*_e$, $v \in E^0$.
Thus by Lemma~2.2 of \cite{anHuefLacaEtAl:JMAA2013} it suffices to check that
$\phi\big(q_v - \sum_{e \in vE^1} t_e t^*_e\big) = 0$ for all $v$ if and only if
$A_\omega m^\phi = e^\beta m^\phi$. For each $v \in E^0$ and $k \ge 1$, we have
\[
q_v - \sum_{e \in vE^1} t_e t^*_e
    =\sum_{\mu \in vE^{<n_k}} \Big(q_{r(\mu)} - \sum_{e \in r(\mu)E^1} t_e t^*_e\Big)\pi_{(\mu,k)}.
\]
Since each term in the last sum is nonnegative, $\phi\big(q_v - \sum_{e \in vE^1} t_e
t^*_e\big) = 0$ for each $v$ if and only if $\phi\big(\big(q_{r(\mu)} - \sum_{e \in
r(\mu)E^1} t_e t^*_e\big)\pi_{(\mu,k)}\big) = 0$ for all $\mu \in E^{<n_k}$.
By~\eqref{eq:halfway} we have
\begin{align*}
\phi\Big(\Big(q_{r(\mu)} - \sum_{e \in r(\mu)E^1} t_e t^*_e\Big)\pi_{(\mu,k)}\Big)
    &= \phi\Big(\pi_{(\mu,k)} - \sum_{e \in r(\mu)E^1} t_e t^*_e\pi_{(\mu,k)}\Big) \\
    &= e^\beta m^\phi(Z(\mu,k)) - (A_\omega m^\phi)(Z(\mu,k)),
\end{align*}
and the result follows.
\end{proof}

\subsection{Constructing KMS states at large inverse temperatures}

In this section, for each measure $m$ satisfying the subinvariance relation of
Theorem~\ref{thm:KMSchar}(\ref{it:KMS subinvariance}) we construct a KMS state of $\Tt(E,
\omega)$ that induces $m$. We also show that positive subinvariant measures $m$ are in
bijection with positive Borel probability measures on $\varprojlim E^{<n_k}$. Let
\[
E^* \times_{E^0} \varprojlim E^{<n_k} := \{(\lambda, x) : \lambda \in E^*, x \in \varprojlim E^{<n_k}, s(\lambda) = r(x_1)\}.
\]
Let $\{h_{\lambda, x} : (\lambda, x) \in E^* \times_{E^0} \varprojlim E^{<n_k}\}$ be the
canonical basis for $\ell^2(E^* \times_{E^0} \varprojlim E^{<n_k})$.

It is not hard to check using a sequential argument that $x \mapsto (r_{n_i}(\lambda,
x_i))^\infty_{i=1}$ is continuous from $\varprojlim E^{<n_k}$ to $\varprojlim E^{<n_k}$.
So for a finite graph $E$ and each $\lambda \in E^*$, there is a map $\alpha_\lambda :
C(\varprojlim E^{<n_k}) \to C(\varprojlim E^{<n_k})$ such that
\[
\alpha_\lambda(\chi_{Z(\mu, k)})(x) :=
    \begin{cases}
        \chi_{Z(\mu, k)}((r_{n_i}(\lambda, x_i))^\infty_{i=1}) &\text{ if $s(\lambda) = r(x)$}\\
        0   &\text{ otherwise.}
    \end{cases}
\]

\begin{prp}\label{prp:path rep}
Let $E$ be a row-finite directed graph with no sources, and take a sequence $\omega =
(n_k)^\infty_{k=1}$ of nonzero positive integers such that $n_k \mid n_{k+1}$ for all
$k$. There is a representation $\varsigma : \Tt(E, \omega) \to \Bb(\ell^2(E^*
\times_{E^0} \varprojlim E^{<n_k}))$ such that for $e \in E^1$ and $v \in E^0$,
\[
\varsigma(t_e)h_{\lambda, x} = \delta_{r(\lambda), s(e)} h_{e\lambda, x}
\qquad\text{ and }\qquad
\varsigma(q_v)h_{\lambda, x} = \delta_{r(\lambda), v} h_{\lambda, x},
\]
and such that for $\mu \in E^{<n_k}$, we have $\varsigma(\pi_{(\mu,k)}) h_{\lambda, x} =
\alpha_\lambda(\chi_{Z(\mu,k)})(x)h_{\lambda, x}$
\end{prp}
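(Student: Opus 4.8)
The plan is to construct $\varsigma$ from the universal property of $\Tt(E,\omega)$ established in Theorem~\ref{thm:omega universal}: I will exhibit a Toeplitz $\omega$-representation $(T,Q,\psi)$ of $E$ on $\Hh := \ell^2(E^* \times_{E^0} \varprojlim E^{<n_k})$ whose generators act by the prescribed formulas, and then set $\varsigma := \varphi_{T,Q,\psi}$. Define $T_e h_{\lambda,x} = \delta_{r(\lambda),s(e)} h_{e\lambda,x}$ and $Q_v h_{\lambda,x} = \delta_{r(\lambda),v} h_{\lambda,x}$; each $T_e$ is then a partial isometry and each $Q_v$ an orthogonal projection, and one computes $T^*_e h_{\lambda,x} = \delta_{\lambda_1,e} h_{\lambda_2\cdots\lambda_{|\lambda|},x}$, which is zero when $\lambda \in E^0$. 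For $\psi$, I first record that, as noted before the statement, $x \mapsto \sigma_\lambda(x) := (r_{n_i}(\lambda,x_i))^\infty_{i=1}$ is a continuous map from the clopen set $\{x \in \varprojlim E^{<n_k} : r(x_1) = s(\lambda)\}$ into $\varprojlim E^{<n_k}$ (that it lands in the projective limit follows from $n_i \mid n_{i+1}$ together with the prefix description of $[\cdot]_n$); hence $f \mapsto \alpha_\lambda(f) := f\circ\sigma_\lambda$, extended by zero off this clopen set, is a $*$-homomorphism $C(\varprojlim E^{<n_k}) \to C(\varprojlim E^{<n_k})$, and $\psi(f) h_{\lambda,x} := \alpha_\lambda(f)(x) h_{\lambda,x}$ defines a diagonal, hence bounded, $*$-homomorphism $\psi : C(\varprojlim E^{<n_k}) \to \Bb(\Hh)$.

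The verification that $(T,Q,\psi)$ is a Toeplitz $\omega$-representation splits into two routine parts and one substantive part. Routine: $T^*_e T_e = Q_{s(e)}$ is immediate, and $Q_v \ge \sum_{e\in vE^1} T_e T^*_e$ holds because $\sum_{e \in vE^1} T_e T^*_e$ is the projection onto $\clsp\{h_{\lambda,x} : |\lambda|\ge 1,\ r(\lambda)=v\}$; and $Q_w = \psi(\chi_{Z(w,1)})$ because $\sigma_\lambda(x)_1 = r_{n_1}(\lambda,x_1)$ equals the vertex $w$ exactly when $r(\lambda)=w$, using $s(\lambda)=r(x_1)$ on basis vectors. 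The substantive part is the covariance relation of Definition~\ref{def:omega-rep}, i.e. computing $T^*_e\psi(\chi_{Z(\mu,k)})$. Unwinding the formulas, $T^*_e\psi(\chi_{Z(\mu,k)}) h_{\lambda,x}$ is nonzero only if $\lambda = e\lambda'$ and $[e\lambda' x_k]_{n_k} = \mu$; from the prefix description of $[\cdot]_{n_k}$ one reads off that this forces either $\mu = e\mu'$ (precisely when $|e\lambda' x_k| \not\equiv 0 \pmod{n_k}$, in which case $[\lambda' x_k]_{n_k} = \mu'$ and the action agrees with $\psi(\chi_{Z(\mu',k)})T^*_e$, the residue bound $|\mu'|\le n_k-2$ ensuring no spurious wrap-around) or $\mu = r(e)\in E^0$ (precisely when $|e\lambda' x_k|\equiv 0 \pmod{n_k}$, in which case $[\lambda' x_k]_{n_k}$ automatically has length $n_k-1$ and range $s(e)$, and the action agrees with $\sum_{e\nu\in E^{n_k}}\psi(\chi_{Z(\nu,k)})T^*_e$), and is $0$ otherwise. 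This gives exactly the three cases of Definition~\ref{def:omega-rep}.

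With $(T,Q,\psi)$ a Toeplitz $\omega$-representation, Theorem~\ref{thm:omega universal} yields $\varsigma := \varphi_{T,Q,\psi} : \Tt(E,\omega)\to\Bb(\Hh)$ satisfying $\varsigma(t_e)=T_e$, $\varsigma(q_v)=Q_v$ and $\varsigma\circ\pi = \psi$, so $\varsigma(\pi_{(\mu,k)}) = \psi(\chi_{Z(\mu,k)})$ acts on $h_{\lambda,x}$ by multiplication by $\alpha_\lambda(\chi_{Z(\mu,k)})(x)$, as required. I expect the main obstacle to be the bookkeeping in the covariance computation: correctly matching each branch of the piecewise formula to a residue class of $|e\lambda' x_k|$ modulo $n_k$, and checking in the $\mu=r(e)$ branch that the prefix $[\lambda' x_k]_{n_k}$ has the right length and range so that it indexes exactly the edges $e\nu\in E^{n_k}$. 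Everything else is a direct unwinding of the definitions.
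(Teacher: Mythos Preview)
Your proof is correct and follows essentially the same approach as the paper's: define the operators directly, verify case-by-case that they satisfy the covariance relation of Definition~\ref{def:omega-rep} so as to form a Toeplitz $\omega$-representation, and then invoke the universal property of Theorem~\ref{thm:omega universal}. One minor point: your verification that $Q_w = \psi(\chi_{Z(w,1)})$ implicitly assumes $n_1 = 1$ (otherwise $[\lambda x_1]_{n_1}$ need not lie in $E^0$), but this is the same tacit assumption the paper makes and is harmless by Proposition~\ref{prp:equiv omegas}.
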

\begin{proof}
We aim to invoke the universal property of $\Tt(E, \omega)$. It is routine to check that
the formulas given for $\varsigma(t_e)$ and $\varsigma(q_v)$ define a
Toeplitz--Cuntz--Krieger $E$-family $(T, Q)$ in $\Bb(\ell^2(E^* \times_{E^0} \varprojlim
E^{<n_k}))$.

Likewise, for each $k$, the formula given for the $\varsigma(\pi_{(\mu,k)})$ determines
mutually orthogonal projections indexed by $\mu \in E^{<n_k}$ and satisfying
$\varsigma(\pi_{(\mu, k)}) = \sum_{\nu \in E^{<n_{k+1}}, [\nu]_{n_k} = \mu}
\varsigma(\pi_{(\nu, k+1)})$, so they determine a homomorphism $\tilde\varsigma :
C(\varprojlim E^{<n_k}) \to \Bb(\ell^2(E^* \times_{E^0} \varprojlim E^{<n_k}))$.

We show that $(T, Q, \tilde\varsigma)$ is a Toeplitz $\omega$-representation of $E$. Take
$e \in E^1$ and $\mu \in E^{<n_k}$ and suppose that $\mu = e\mu'$. For any $(\lambda, x)
\in E^* \times_{E^0} \varprojlim E^{<n_k}$, we have
\[
T^*_e \tilde\varsigma_{(\mu,k)} h_{\lambda ,x}
    = T^*_e \alpha_\lambda(\chi_{Z(\mu,k)})(x) h_{\lambda,x}
    = \begin{cases}
        \alpha_\lambda(\chi_{Z(\mu,k)})(x) h_{\lambda', x} &\text{ if $\lambda = e\lambda'$}\\
        0 &\text{ otherwise.}
    \end{cases}
\]
Also,
\[
\tilde\varsigma_{(\mu',k)} T^*_e h_{\lambda ,x}
    = \begin{cases}
        \varsigma_{(\mu', k)} h_{\lambda', x} &\text{ if $\lambda = e\lambda'$}\\
        0 &\text{ otherwise}
    \end{cases}
    = \begin{cases}
        \alpha_{\lambda'}(\chi_{Z(\mu',k)})(x) h_{\lambda', x} &\text{ if $\lambda = e\lambda'$}\\
        0 &\text{ otherwise.}
    \end{cases}
\]
If $\lambda \not= e\lambda'$ then both $T^*_e \tilde\varsigma_{(\mu,k)} h_{\lambda ,x}$
and $\tilde\varsigma_{(\mu',k)} T^*_e h_{\lambda ,x}$ are zero, so suppose that $\lambda
= e\lambda'$. Then $\alpha_\lambda(\chi_{Z(\mu,k)})(x) = \chi_{Z(\mu,k)}(r_{n_i}(\lambda,
x_i)^\infty_{i=1}) = 1$ if and only if $\alpha_{\lambda'}(\chi_{Z(\mu',k)})(x) = 1$ as
well; so $T^*_e \tilde\varsigma_{(\mu,k)} = \tilde\varsigma_{(\mu',k)} T^*_e$.

Now let $v = r(e)$, and observe that
\[
T^*_e \tilde\varsigma_{(v,k)} h_{\lambda ,x}
    = \begin{cases}
        \alpha_\lambda(\chi_{Z(v,k)})(x) h_{\lambda', x} &\text{ if $\lambda = e\lambda'$}\\
        0 &\text{ otherwise,}
    \end{cases}
\]
while
\[
\sum_{e\tau \in E^{n_k}} \tilde\varsigma_{(\tau,k)} T^*_e h_{\lambda ,x}
    = \begin{cases}
        \sum_{e\tau \in E^{n_k}} \alpha_{\lambda'}(\chi_{Z(\tau,k)})(x) h_{\lambda', x} &\text{ if $\lambda = e\lambda'$}\\
        0 &\text{ otherwise}.
    \end{cases}
\]
Again, if $\lambda \not= e\lambda'$, then both expressions are zero, so we suppose that
$\lambda = e\lambda'$. We have $\alpha_\lambda(\chi_{Z(v,k)})(x) = 1$ if and only if
$r(\lambda) = v$ and $|\lambda x_i| \in n_i\NN$ for large $i$. Also, $\sum_{e\tau \in
E^{n_k}} \alpha_{\lambda'}(\chi_{Z(\tau,k)})(x) = 1$ if and only if $[\lambda' x_i]_{n_i}
\in E^{n_i - 1}$ for large $i$, which is equivalent to $|\lambda' x_i| \equiv n_i - 1
\;(\operatorname{mod}\, n_i)$ for large $i$, and so $T^*_e \tilde\varsigma_{(v,k)}
h_{\lambda ,x} = \sum_{e\tau \in E^{n_k}} \tilde\varsigma_{(\tau,k)} T^*_e h_{\lambda
,x}$ as required.

Finally, suppose that $\mu \not= e\mu'$ and $\mu \not= r(e)$. We immediately see that
$T^*_e \tilde\varsigma_{(\mu,k)} = 0$ if $\mu \in E^0 \setminus r(e)$. If $\mu \not\in
E^0$, then $\mu_1 \not= e$, so that $\tilde\varsigma_{(\mu,k)}$ is the projection onto a
subspace of $\clsp\{h_{\lambda, x} : (\lambda x_i)_1 = \mu_1\text{ for large $i$}\}$,
which is orthogonal to the projection $T_e T^*_e$ onto $\clsp\{h_{\lambda, x} : \lambda_1
= e\}$.

We have now established that $(T, Q, \tilde\varsigma)$ is an $\omega$-representation, and
so the universal property of $\Tt(E, \omega)$ gives the desired homomorphism $\varsigma$.
\end{proof}

The following technical result will help in our construction of KMS states.

\begin{lem}\label{lem:m<->eps}
Let $E$ be a strongly connected finite directed graph with no sources, and take a
sequence $\omega = (n_k)^\infty_{k=1}$ of nonzero positive integers such that $n_k \mid
n_{k+1}$ for all $k$. Take $\beta > \ln\rho(A_E)$. The series $\sum^\infty_{j=0}
e^{-\beta j}A^j_\omega$ converges in norm to an inverse for $1 - e^{-\beta} A_\omega$.
For $\varepsilon \in \Mm^+(\varprojlim E^{<n_k})$ and $\tau \in E^{<n_k}$,
\[
    (1 - e^{-\beta}A_\omega)^{-1}(\varepsilon)(Z(\tau, k))
        = \sum_{(\lambda,\nu) \in \tau E(n_k)^*} e^{-\beta|\lambda|} \varepsilon(Z(\nu, k)).
\]
\end{lem}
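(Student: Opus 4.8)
The plan is to treat the two claims separately: norm convergence of the Neumann series, and then the closed formula for the resolvent applied to a positive measure.

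For convergence, Proposition~\ref{prp:Aomega eigenmeasure} gives $\|A_\omega\| = \rho(A_E)$ as an operator on the Banach space $\Mm(\varprojlim E^{<n_k})$. Since $\beta > \ln\rho(A_E)$, we have $\|e^{-\beta}A_\omega\| = e^{-\beta}\rho(A_E) < 1$, so the usual Banach-algebra Neumann series argument shows that $1 - e^{-\beta}A_\omega$ is invertible with inverse $\sum_{j=0}^\infty (e^{-\beta}A_\omega)^j = \sum_{j=0}^\infty e^{-\beta j}A^j_\omega$, the series converging in operator norm.

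For the formula, I would first record the action of the iterates $A^j_\omega$ on cylinder sets. Fixing $k$, Lemma~\ref{lem:Aomega on cylinders} shows that $(A_\omega m)(Z(\mu,k)) = \sum_{\nu\in E^{<n_k}} A_{E(n_k)}(\mu,\nu)\,m(Z(\nu,k))$; that is, on the vector of values $(m(Z(\nu,k)))_{\nu\in E^{<n_k}}$ the operator $A_\omega$ acts by the finite adjacency matrix $A_{E(n_k)}$. Iterating this and using that powers of an adjacency matrix count paths, $A^j_{E(n_k)}(\mu,\nu) = |\mu E(n_k)^j\nu|$, a routine induction yields
\[
(A^j_\omega m)(Z(\mu,k)) = \sum_{\nu\in E^{<n_k}} |\mu E(n_k)^j\nu|\,m(Z(\nu,k)) \qquad (j\ge 0,\ \mu\in E^{<n_k}).
\]
Now fix $\varepsilon\in\Mm^+(\varprojlim E^{<n_k})$ and $\tau\in E^{<n_k}$. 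Since the matrices $A_{E(n_k)}$ are nonnegative, each $A^j_\omega\varepsilon$ is a positive measure; the partial sums of the Neumann series increase to $(1-e^{-\beta}A_\omega)^{-1}\varepsilon$ in norm, and the evaluation $m\mapsto m(Z(\tau,k))$ is norm-continuous (indeed $|m(Z(\tau,k))|\le\|m\|$), so
\[
(1-e^{-\beta}A_\omega)^{-1}(\varepsilon)(Z(\tau,k)) = \sum_{j=0}^\infty e^{-\beta j}(A^j_\omega\varepsilon)(Z(\tau,k)) = \sum_{j=0}^\infty\sum_{\nu\in E^{<n_k}} e^{-\beta j}\,|\tau E(n_k)^j\nu|\,\varepsilon(Z(\nu,k)).
\]
Every summand is nonnegative, so I may reorganise freely. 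Under the bijection from Section~\ref{sec:background} identifying $E(n_k)^*$ with $\{(\lambda,\nu):\lambda\in E^*,\ \nu\in s(\lambda)E^{<n_k}\}$, a path $(\lambda,\nu)$ has length $|\lambda|$, source $\nu$ and range $[\lambda\nu]_{n_k}$; hence $\tau E(n_k)^j\nu$ corresponds to $\{\lambda\in E^j r(\nu):[\lambda\nu]_{n_k}=\tau\}$, and $\tau E(n_k)^* = \bigsqcup_{j\ge 0}\bigsqcup_{\nu\in E^{<n_k}}\tau E(n_k)^j\nu$. Collecting terms indexed by $j$ and $\nu$ into a single sum over $\tau E(n_k)^*$, the right-hand side becomes $\sum_{(\lambda,\nu)\in\tau E(n_k)^*} e^{-\beta|\lambda|}\varepsilon(Z(\nu,k))$, which is the claimed identity.

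There is no genuine analytic obstacle; the work is purely organisational. The one place to be careful is the final re-indexing, where one must match the length of a path in $E(n_k)$ with $|\lambda|$ under the identification of $E(n_k)^*$ and keep the range/source conventions straight; the interchange of the $j$-sum with the evaluation functional and with the $\nu$-sum is unproblematic because $\varepsilon\ge 0$ forces every term to be nonnegative. All the substantive input has already been supplied by Proposition~\ref{prp:Aomega eigenmeasure} and Lemma~\ref{lem:Aomega on cylinders}.
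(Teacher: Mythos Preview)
Your proof is correct and follows essentially the same approach as the paper: both establish norm convergence of the Neumann series via $\|A_\omega\|=\rho(A_E)$ from Proposition~\ref{prp:Aomega eigenmeasure}, then iterate Lemma~\ref{lem:Aomega on cylinders} to express $(A^j_\omega\varepsilon)(Z(\tau,k))$ in terms of $|\tau E(n_k)^j\nu|$ and re-index the resulting double sum as a sum over $\tau E(n_k)^*$. Your version is more explicit about justifying the interchange of sum and evaluation (via norm-continuity and positivity), whereas the paper simply writes down the chain of equalities.
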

\begin{proof}
Proposition~\ref{prp:Aomega eigenmeasure} gives $\|A_\omega\| = \rho(A_E)$. Since $\beta
> \ln\rho(A_E)$, we have $\|e^{-\beta}A_\omega\| < 1$, and so $\sum^\infty_{j=0} e^{-\beta
j}A^j_\omega$ converges in operator norm to $(1 - e^{-\beta}A_\omega)^{-1}$.

Now take $\tau \in E^{<n_k}$. Using Lemma~\ref{lem:Aomega on cylinders} at the second
equality, we calculate
\begin{align*}
(1 - e^{-\beta}A_\omega)^{-1}(\varepsilon)(Z(\tau, k))
    &= \sum^\infty_{j=0} e^{-\beta j}(A^j_\omega\varepsilon)(Z(\tau, k)) \\
    &= \sum^\infty_{j=0} \sum_{\nu \in E^{<n_k}} e^{-\beta j}|\tau E(n_k)^j \nu| \varepsilon(Z(\nu, k)) \\
    &= \sum^\infty_{j=0} \sum_{(\lambda,\nu) \in \tau E(n_k)^j} e^{-\beta j} \varepsilon(Z(\nu, k)) \\
    &= \sum_{(\lambda,\nu) \in \tau E(n_k)^*} e^{-\beta |\lambda|} \varepsilon(Z(\nu, k)).\qedhere
\end{align*}
\end{proof}

We can now construct a KMS state for each measure that satisfies the subinvariance
relation in Theorem~\ref{thm:KMSchar}(\ref{it:KMS subinvariance}).

\begin{prp}\label{prp:constructKMS}
Let $E$ be a strongly connected finite directed graph with no sources, and take a
sequence $\omega = (n_k)^\infty_{k=1}$ of nonzero positive integers such that $n_k \mid
n_{k+1}$ for all $k$. Take $\beta > \ln\rho(A_E)$. Suppose that $m \in
\Mm^+_1(\varprojlim E^{<n_k})$ satisfies $A_\omega m \le e^\beta m$. Then there is a
KMS$_\beta$ state $\phi_m$ of $(\Tt(E, \omega), \alpha)$ satisfying
\begin{equation}\label{eq:phim formula}
\phi_m(t_\mu \pi_{(\tau, k)} t^*_\nu) = \delta_{\mu,\nu} e^{-\beta|\mu|} m(Z(\tau, k))
\end{equation}
for all $\tau \in E^{<n_k}$ and all $\mu,\nu \in E^* r(\tau)$.
\end{prp}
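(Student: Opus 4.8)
The plan is to realise $\phi_m$ as an integral of vector states in the path-space representation $\varsigma$ of Proposition~\ref{prp:path rep}, taken against the ``excess'' measure $\varepsilon := m - e^{-\beta}A_\omega m$. Since $A_\omega m \le e^\beta m$ by hypothesis, $\varepsilon$ is a positive finite Borel measure on $\varprojlim E^{<n_k}$, and since $\beta > \ln\rho(A_E) = \ln\|A_\omega\|$ (Proposition~\ref{prp:Aomega eigenmeasure}) Lemma~\ref{lem:m<->eps} gives $m = (1 - e^{-\beta}A_\omega)^{-1}\varepsilon$ and hence
\[
m(Z(\tau,k)) = \sum_{(\lambda,\nu)\in\tau E(n_k)^*} e^{-\beta|\lambda|}\,\varepsilon(Z(\nu,k))
\qquad\text{for all $k$ and all $\tau\in E^{<n_k}$.}
\]
I would then define, for $a\in\Tt(E,\omega)$,
\[
\phi_m(a) := \sum_{\lambda\in E^*} e^{-\beta|\lambda|} \int_{\{x\,:\,r(x_1)=s(\lambda)\}} \big(\varsigma(a)h_{\lambda,x}\mid h_{\lambda,x}\big)\,d\varepsilon(x).
\]
For $a$ in the spanning family $\{t_\mu\pi_{(\tau,k)}t^*_\nu\}$ of Lemma~\ref{lem:omega mpctn}, the function $x\mapsto(\varsigma(a)h_{\lambda,x}\mid h_{\lambda,x})$ is continuous by the explicit formulas of Proposition~\ref{prp:path rep}, hence Borel for every $a$ by density and boundedness of $\varsigma$. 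Since $E$ is finite and $\beta>\ln\rho(A_E)$ we have $\sum_{\lambda\in E^*}e^{-\beta|\lambda|}=\sum_{j\ge 0}e^{-\beta j}|E^j|<\infty$, and together with $|(\varsigma(a)h_{\lambda,x}\mid h_{\lambda,x})|\le\|a\|$ and $\varepsilon(\varprojlim E^{<n_k})<\infty$ this shows the series converges absolutely and $\phi_m$ is a bounded linear functional; it is positive because each integrand of $\phi_m(a^*a)$ equals $\|\varsigma(a)h_{\lambda,x}\|^2\ge 0$ and $\varepsilon\ge 0$.

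Next I would evaluate $\phi_m$ on $t_\mu\pi_{(\tau,k)}t^*_\nu$ for $\mu,\nu\in E^*r(\tau)$ using Proposition~\ref{prp:path rep}. One finds $\varsigma(t_\mu\pi_{(\tau,k)}t^*_\nu)h_{\lambda,x}=\alpha_{\lambda'}(\chi_{Z(\tau,k)})(x)\,h_{\mu\lambda',x}$ when $\lambda=\nu\lambda'$ and $0$ otherwise, so $(\varsigma(t_\mu\pi_{(\tau,k)}t^*_\nu)h_{\lambda,x}\mid h_{\lambda,x})$ vanishes unless $\mu=\nu$ and $\lambda=\mu\lambda'$ with $\lambda'\in r(\tau)E^*$, in which case it equals $\alpha_{\lambda'}(\chi_{Z(\tau,k)})(x)$. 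Hence
\[
\phi_m(t_\mu\pi_{(\tau,k)}t^*_\nu) = \delta_{\mu,\nu}\,e^{-\beta|\mu|}\sum_{\lambda'\in r(\tau)E^*} e^{-\beta|\lambda'|}\,\varepsilon\big(\{x : [\lambda' x_k]_{n_k}=\tau\}\big).
\]
The set $\{x:[\lambda' x_k]_{n_k}=\tau\}$ is the disjoint union of the cylinders $Z(\nu,k)$ over the $\nu\in s(\lambda')E^{<n_k}$ with $[\lambda'\nu]_{n_k}=\tau$, and under the identification of $E(n_k)^*$ with $\{(\lambda',\nu):\lambda'\in E^*,\ \nu\in s(\lambda')E^{<n_k}\}$ from Section~\ref{sec:background} these pairs are exactly the elements of $\tau E(n_k)^*$. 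Reindexing and applying the displayed formula for $m(Z(\tau,k))$ above gives $\phi_m(t_\mu\pi_{(\tau,k)}t^*_\nu)=\delta_{\mu,\nu}e^{-\beta|\mu|}m(Z(\tau,k))$, which is exactly \eqref{eq:phim formula}; an identical computation (with $\lambda'=\lambda$ throughout) gives $\phi_m(\pi_{(\tau,k)})=m(Z(\tau,k))$.

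It remains to see that $\phi_m$ is a state and that it is KMS$_\beta$. Each $\Tt(E,n_k)$ is unital, being the Toeplitz algebra of the finite graph $E(n_k)$, and the connecting maps are unital, so $\Tt(E,\omega)$ is unital with $1=\sum_{\mu\in E^{<n_k}}\pi_{(\mu,k)}$ for any $k$; hence $\phi_m(1)=\sum_{\mu\in E^{<n_k}}m(Z(\mu,k))=m(\varprojlim E^{<n_k})=1$, so the positive bounded functional $\phi_m$ is a state. The elements $t_\mu\pi_{(\tau,k)}t^*_\nu$ are analytic for $\alpha$ and span a dense subspace of $\Tt(E,\omega)$, and by the previous paragraph $\phi_m(t_\mu\pi_{(\tau,k)}t^*_\nu)=\delta_{\mu,\nu}e^{-\beta|\mu|}\phi_m(\pi_{(\tau,k)})$; so Theorem~\ref{thm:KMSchar}(\ref{it:KMS cond}) shows that $\phi_m$ is a KMS$_\beta$ state of $(\Tt(E,\omega),\alpha)$. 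I expect the main obstacle to be the bookkeeping in the third paragraph: matching the level set $\{x:[\lambda' x_k]_{n_k}=\tau\}$ of $\alpha_{\lambda'}(\chi_{Z(\tau,k)})$ with the union of cylinders indexed by $\tau E(n_k)^*$, keeping the constraint $\lambda'\in r(\tau)E^*$ and the exponents $e^{-\beta|\lambda'|}$ aligned so that Lemma~\ref{lem:m<->eps} applies, together with the routine justification for interchanging the sum over $\lambda$ with the integration against $\varepsilon$.
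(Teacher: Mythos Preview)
Your proposal is correct and follows essentially the same approach as the paper: define $\varepsilon = (1 - e^{-\beta}A_\omega)m$, build $\phi_m$ as a weighted sum over $\lambda\in E^*$ of integrals of vector states in the path-space representation $\varsigma$ against $\varepsilon$, compute on the spanning elements using the combinatorial identification of $\{x:[\lambda' x_k]_{n_k}=\tau\}$ with cylinders indexed by $\tau E(n_k)^*$, and invoke Lemma~\ref{lem:m<->eps} and Theorem~\ref{thm:KMSchar}(\ref{it:KMS cond}). The only cosmetic differences are that the paper writes the integration domain as $Z(s(\lambda),1)$ (tacitly using $n_1=1$) rather than $\{x:r(x_1)=s(\lambda)\}$, and checks $\phi_m(1)=1$ directly via Lemma~\ref{lem:m<->eps} before evaluating on spanning elements, whereas you deduce it afterwards from $1=\sum_{\mu\in E^{<n_k}}\pi_{(\mu,k)}$.
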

\begin{proof}
Let $\varepsilon := (1 - e^{-\beta}A_\omega) m$. Since $m$ is subinvariant, $\varepsilon$
is a positive measure on $\varprojlim E^{<n_k}$. Let $\varsigma : \Tt(E, \omega) \to
\Bb(\ell^2(E^* \times_{E^0} \varprojlim E^{<n_k}))$ be the representation of
Proposition~\ref{prp:path rep}. We aim to define $\phi_m$ by
\begin{equation}\label{eq:phim def}
\phi_m(a) = \sum_{\lambda \in E^*} e^{-\beta |\lambda|} \int_{x \in \varprojlim E^{<n_k}}
    \chi_{Z(s(\lambda), 1)}(x) \big(\varsigma(a)h_{\lambda, x} \mid h_{\lambda, x}\big)\,d\varepsilon(x).
\end{equation}
We first show that for $a \in \Tt(E, \omega)$, the function $f_a : E^* \times_{E^0}
\varprojlim E^{<n_k} \to \CC$ given by $f_a(\lambda, x) = (\varsigma(a)h_{\lambda, x}
\mid h_{\lambda, x})$ is integrable. First consider $a = t_\mu \pi_{(\tau, k)} t^*_\nu$.
We have
\begin{align}
    \big(\varsigma(t_\mu \pi_{(\tau, k)} t^*_\nu) h_{\lambda, x} \mid h_{\lambda, x}\big)
        &= \big(\varsigma(\pi_{(\tau, k)} t^*_\nu) h_{\lambda, x} \mid \varsigma(\pi_{(\tau, k)} t^*_\mu) h_{\lambda, x}\big)\nonumber\\
        &= \begin{cases}
            \alpha_{\lambda'}(\chi_{Z(\tau, k)})(x) &\text{ if $\lambda = \nu\lambda' = \mu\lambda'$} \\
            0 &\text{ otherwise.}
        \end{cases}\label{ep:i.p.}
\end{align}
So $f_a$ is the characteristic function of the clopen set $\bigsqcup\{Z(\tau, k) : \tau
\in E^{<n_k}, [\lambda\tau]_{n_k} = \mu\}$, and hence integrable. Consequently $f_a$ is
integrable for $a \in \lsp\{t_\mu \pi_{(\tau, k)} t^*_\nu\}$. Now as in
\cite[Lemma~10.1(b)]{anHuefLacaEtAl:JFA15}, for $a \in \Tt(E, \omega)$ is a pointwise
limit of integrable functions and hence itself integrable as claimed.

Since each $Z(s(\lambda), 1)$ is measurable, the functions $\chi_{Z(s(\lambda), 1)}
f_a$ are also integrable. Since $f_a(\lambda, x) \le \|a\|$ for all $(\lambda, x)$, we
have $\int_{\varprojlim E^{<n_k}} \chi_{Z(s(\lambda), 1)} f_a(\lambda, x)\,d\mu(x) <
\|a\|$. Since $\beta > \ln\rho(A_E)$, Lemma~\ref{lem:m<->eps} implies that $\sum_{\lambda
\in E^*v} e^{-\beta|\lambda|}$ is convergent for each $v$, and so the series on the
right-hand side of~\eqref{eq:phim def} is bounded above by the convergent series $\sum_{v
\in E^0} \sum_{\lambda \in E^*v} e^{-\beta|\lambda|} \|a\|$, and hence itself convergent.
So there is a bounded linear map $\phi_m : \Tt(E, \omega) \to \CC$
satisfying~\eqref{eq:phim def}.

This $\phi_m$ is positive because $f_{a^*a}$ is positive-valued. We check that $\phi_m$
is a state. We use Lemma~\ref{lem:m<->eps} at the penultimate equality to calculate
\begin{align*}
\phi_m(1)
    &= \sum_{\lambda \in E^*} e^{-\beta |\lambda|} \int_{x \in \varprojlim E^{<n_k}}
    \chi_{Z(s(\lambda), 1)}(x)\,d\varepsilon(x) \\
    &= \sum_{\lambda \in E^*} e^{-\beta|\lambda|} \varepsilon(Z(s(\lambda), 1))
    = \sum_{w \in E^0} m(Z(w,1)) = 1.
\end{align*}
Since $\mu\lambda' = \nu\lambda'$ forces $\mu = \nu$, we have $\phi_m(t_\mu
\pi_{(\tau,k)} t^*_\nu) = 0$ if $\mu \not= \nu$. Moreover, each
\[
\big(\varsigma(t_\mu \pi_{(\tau, k)} t^*_\mu) h_{\lambda, x} \mid h_{\lambda, x}\big)
    = \|\varsigma(\pi_{(\tau, k)} t^*_\mu) h_{\lambda, x}\|^2
    = \begin{cases}
        \alpha_{\lambda'}(\chi_{Z(\tau,k)})(x) &\text{ if $\lambda = \mu\lambda'$}\\
        0 &\text{ otherwise.}
    \end{cases}
\]
Hence
\begin{align*}
\phi_m(t_\mu \pi_{(\tau, k)} t^*_\mu)
    &= \sum_{\mu\lambda' \in E^*} e^{-\beta |\mu\lambda'|} \int_{x \in \varprojlim E^{<n_k}}
    \alpha_{\lambda'}(\chi_{Z(\tau,k)})(x)\,d\varepsilon(x)\\
    &= e^{\beta|\mu|} \sum_{\lambda' \in s(\mu)E^*} e^{-\beta|\lambda'|}
        \int_{x \in Z(s(\lambda'), 1)} \chi_{Z(\tau, k)}\big((r_{n_i}(\lambda', x_i))^\infty_{i=1}\big)\,d\varepsilon(x) \\
    &= e^{\beta|\mu|} \sum_{\lambda' \in s(\mu)E^*} e^{-\beta|\lambda'|}
        \varepsilon\big(\{x : r_{n_k}(\lambda', x_k) = \tau\}\big) \\
    &= e^{\beta|\mu|} \sum_{(\lambda',\nu) \in \tau E(n_k)^*} e^{-\beta|\lambda'|} \varepsilon(Z(\nu,k))\\
    &= e^{\beta|\mu|} m(Z(\tau, k)),
\end{align*}
which is~\eqref{eq:phim formula}. Putting $\mu = r(\tau)$ gives $\phi_m(\pi_{(\tau, k)})
= m(Z(\tau, k))$, and so $\phi_m$ also satisfies~\eqref{eq:KMS on spanner}, and is
therefore KMS by Theorem~\ref{thm:KMSchar}(\ref{it:KMS cond}).
\end{proof}

\begin{thm}\label{thm:affine iso}
Let $E$ be a strongly connected finite directed graph with no sources, and take a
sequence $\omega = (n_k)^\infty_{k=1}$ of nonzero positive integers such that $n_k \mid
n_{k+1}$ for all $k$. Let $\alpha : \RR \to \Aut(\Tt(E, \omega))$ be given by $\alpha_t =
\gamma_{e^{it}}$. Take $\beta > \ln\rho(A_E)$.
\begin{enumerate}
\item\label{it:which epsilons} Take $\varepsilon \in \Mm^+(\varprojlim E^{<n_k})$.
    For each $x \in \varprojlim E^{<n_k}$, the series $\sum_{\mu \in E^* r(x)}
    e^{-\beta|\mu|}$ converges; we write $y(x)$ for its limit. We have $(1 -
    e^{-\beta} A_\omega)^{-1}\varepsilon \in \Mm^+_1(\varprojlim E^{<n_k})$ if and
    only if
    \[
        \int_{x \in \varprojlim E^{<n_k}} y(x)\,d\varepsilon(x) = 1.
    \]
\item\label{it:eps->state} Suppose that $\varepsilon \in \Mm^+(\varprojlim E^{<n_k})$
    satisfies $\int_{\varprojlim E^{<n_k}} y(x)\,d\varepsilon(x) = 1$, and define $m
    := (1 - e^{-\beta} A_\omega)^{-1}\varepsilon$. There is a KMS$_\beta$ state
    $\phi_\varepsilon$ of $(\Tt(E, \omega), \alpha)$ such that
    \begin{equation}\label{eq:phieps on spanner}
        \phi_\varepsilon(t_\mu \pi_{(\tau,k)} t^*_\nu)
            = \delta_{\mu,\nu} e^{-\beta|\mu|} m(Z(\tau, k)).
    \end{equation}
\item\label{it:affineiso} The map $\varepsilon \mapsto \phi_\varepsilon$ is an affine
    isomorphism of
    \[\textstyle
        \Omega_\beta := \{\varepsilon \in \Mm^+(\varprojlim E^{<n_k}) : \int y(x)\,d\varepsilon(x) = 1\}
    \]
    onto the simplex of KMS$_\beta$ states of $(\Tt(E, \omega), \alpha)$. The inverse
    of this isomorphism takes a KMS$_\beta$ state $\phi$ to $(1 -
    e^{-\beta}A_\omega)m^\phi$.
\end{enumerate}
\end{thm}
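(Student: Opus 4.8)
The plan is to base everything on the single identity
\[
    \big\|(1 - e^{-\beta}A_\omega)^{-1}\varepsilon\big\|
        = \int_{x \in \varprojlim E^{<n_k}} y(x)\,d\varepsilon(x)
        \qquad\text{for all } \varepsilon \in \Mm^+(\varprojlim E^{<n_k}),
\]
after which each of the three parts follows in a few lines from Lemma~\ref{lem:m<->eps}, Proposition~\ref{prp:constructKMS}, and Theorem~\ref{thm:KMSchar}. Recall that $(1-e^{-\beta}A_\omega)^{-1}$ is a bounded operator on the Banach space $\Mm(\varprojlim E^{<n_k})$ since $\|A_\omega\| = \rho(A_E)$ and $\beta > \ln\rho(A_E)$, so $(1-e^{-\beta}A_\omega)^{-1}\varepsilon$ is always a finite measure, and it is positive because $(1-e^{-\beta}A_\omega)^{-1} = \sum_{j\ge 0} e^{-\beta j}A_\omega^j$ and each $A_\omega^j\varepsilon$ is positive.

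For part~(\ref{it:which epsilons}) I would first note that the cardinalities $|E^*r(x)\cap E^j|$ grow at rate at most $\rho(A_E)$, so the series $\sum_{\mu \in E^* r(x)} e^{-\beta|\mu|} = \sum_{j \ge 0} e^{-\beta j}|E^*r(x)\cap E^j|$ converges and $y(x)$ is well defined; since $x_1 = [x_l]_{n_1}$ is an initial segment of $x_l$ we have $r(x_l) = r(x)$ for every $l$, so $y$ depends only on the common range vertex and is constant on each of the finitely many disjoint clopen sets $\{x : r(x) = w\}$, $w \in E^0$, which cover $\varprojlim E^{<n_k}$. Now put $m := (1-e^{-\beta}A_\omega)^{-1}\varepsilon = \sum_{j \ge 0} e^{-\beta j}A_\omega^j\varepsilon$ (Lemma~\ref{lem:m<->eps}); since $m$ is a finite positive measure, $\|m\| = m(\varprojlim E^{<n_k}) = \sum_{\mu \in E^{<n_l}} m(Z(\mu,l))$ for any $l$. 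Substituting the cylinder formula of Lemma~\ref{lem:m<->eps} and summing over $\mu \in E^{<n_l}$, the index set $\bigsqcup_{\mu} \mu E(n_l)^*$ collapses to $E(n_l)^* = \{(\lambda,\nu) : \lambda \in E^*,\ \nu \in s(\lambda)E^{<n_l}\}$, and $\sum_{\nu \in s(\lambda)E^{<n_l}} \varepsilon(Z(\nu,l)) = \varepsilon(\{x : r(x) = s(\lambda)\})$ because those cylinders partition $\{x : r(x_l) = s(\lambda)\}$. Reindexing the resulting sum by $w := s(\lambda) \in E^0$ and recognising $\sum_{\lambda \in E^*w} e^{-\beta|\lambda|}$ as the constant value of $y$ on $\{x : r(x) = w\}$ yields the displayed identity, all rearrangements being legitimate by Tonelli since every term is nonnegative. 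As $m$ is always a finite positive measure, it lies in $\Mm^+_1(\varprojlim E^{<n_k})$ precisely when $\|m\| = 1$, i.e.\ precisely when $\int y\,d\varepsilon = 1$.

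Part~(\ref{it:eps->state}) is then immediate: for $\varepsilon \in \Omega_\beta$, part~(\ref{it:which epsilons}) gives $m := (1-e^{-\beta}A_\omega)^{-1}\varepsilon \in \Mm^+_1(\varprojlim E^{<n_k})$, and $(1-e^{-\beta}A_\omega)m = \varepsilon \ge 0$ rearranges to the subinvariance relation $A_\omega m \le e^\beta m$, so Proposition~\ref{prp:constructKMS} produces a KMS$_\beta$ state $\phi_\varepsilon := \phi_m$ satisfying $\phi_\varepsilon(t_\mu \pi_{(\tau,k)} t^*_\nu) = \delta_{\mu,\nu} e^{-\beta|\mu|} m(Z(\tau,k))$, which is~\eqref{eq:phieps on spanner}. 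For part~(\ref{it:affineiso}) I would argue that $\varepsilon \mapsto \phi_\varepsilon$ is the composite of the linear map $\varepsilon \mapsto m$ with $m \mapsto \phi_m$; by~\eqref{eq:phim formula} the states $\phi_{tm_1 + (1-t)m_2}$ and $t\phi_{m_1} + (1-t)\phi_{m_2}$ agree on the spanning family of Lemma~\ref{lem:omega mpctn}, so $m \mapsto \phi_m$ is affine on the convex set of subinvariant probability measures and, $\Omega_\beta$ being convex, $\varepsilon \mapsto \phi_\varepsilon$ is affine. It is injective because $\phi_\varepsilon$ recovers $m$ via $m(Z(\tau,k)) = \phi_\varepsilon(\pi_{(\tau,k)})$ and hence $\varepsilon = (1-e^{-\beta}A_\omega)m$. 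For surjectivity, given a KMS$_\beta$ state $\phi$, Theorem~\ref{thm:KMSchar}(\ref{it:KMS subinvariance}) supplies $m^\phi \in \Mm^+_1(\varprojlim E^{<n_k})$ with $A_\omega m^\phi \le e^\beta m^\phi$, so $\varepsilon := (1-e^{-\beta}A_\omega)m^\phi$ is positive and $(1-e^{-\beta}A_\omega)^{-1}\varepsilon = m^\phi$ has norm $1$, whence $\varepsilon \in \Omega_\beta$ by part~(\ref{it:which epsilons}); by~\eqref{eq:phieps on spanner} and Theorem~\ref{thm:KMSchar}(\ref{it:KMS cond}) both $\phi_\varepsilon$ and $\phi$ equal $\delta_{\mu,\nu} e^{-\beta|\mu|} m^\phi(Z(\tau,k))$ on each $t_\mu \pi_{(\tau,k)} t^*_\nu$, so they agree on a dense subspace and $\phi_\varepsilon = \phi$. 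This last computation also identifies the inverse as $\phi \mapsto (1-e^{-\beta}A_\omega)m^\phi$, which is affine because $\phi \mapsto m^\phi$ is.

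I expect the only real work to be the bookkeeping in part~(\ref{it:which epsilons}): correctly collapsing $\bigsqcup_{\mu \in E^{<n_l}} \mu E(n_l)^*$ to $E(n_l)^*$ under the standard bijection, identifying the fibres $\bigsqcup_{\nu \in s(\lambda)E^{<n_l}} Z(\nu,l)$ with $\{x : r(x) = s(\lambda)\}$ using $r(x_l) = r(x)$, and verifying that $y$ is constant on these sets. The interchanges of summation and integration are harmless because all terms are nonnegative, and the rest is a routine assembly of Lemma~\ref{lem:m<->eps}, Proposition~\ref{prp:constructKMS}, and Theorem~\ref{thm:KMSchar}; one could also note, if desired, that $\varepsilon \mapsto \phi_\varepsilon$ is a homeomorphism for the natural topologies, though this is not needed for the stated conclusion.
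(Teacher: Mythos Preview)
Your proposal is correct and follows essentially the same approach as the paper's own proof: compute $\|(1-e^{-\beta}A_\omega)^{-1}\varepsilon\|$ via Lemma~\ref{lem:m<->eps} to get part~(\ref{it:which epsilons}), feed the resulting subinvariant probability measure into Proposition~\ref{prp:constructKMS} for part~(\ref{it:eps->state}), and use Theorem~\ref{thm:KMSchar} for surjectivity in part~(\ref{it:affineiso}). The only cosmetic differences are that the paper derives $A_\omega m \le e^\beta m$ by truncating the series $\sum_{j\ge 0} e^{-\beta j}A_\omega^j\varepsilon$ rather than directly from $(1-e^{-\beta}A_\omega)m = \varepsilon \ge 0$, and it also records that $\varepsilon \mapsto \phi_\varepsilon$ is a weak$^*$-homeomorphism (using compactness of the KMS$_\beta$ simplex), which, as you note, is not required for the statement as given.
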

\begin{proof}
(\ref{it:which epsilons}) The series $\sum^\infty_{j=0} (e^{-\beta
j}A^j_\omega)\varepsilon$ converges to $m := (1 - e^{-\beta} A_\omega)^{-1}\varepsilon$
because $\beta > \ln\rho(A_E)$. This shows that $m \ge 0$.

Using Lemma~\ref{lem:m<->eps}, we fix $k$ and calculate
\begin{align*}
m(\varprojlim E^{<n_k})
    &= \sum_{(\lambda,\nu) \in E(n_k)^*} e^{-\beta|\lambda|} \varepsilon(Z(\nu, k))
     = \sum_{\nu \in E^{<n_k}} \sum_{\lambda \in E^* r(\nu)} e^{-\beta|\lambda|} \varepsilon(Z(\nu, k)) \\
    &= \sum_{\nu \in E^{<n_k}} \int_{x \in Z(\nu, k)} y(x)\,d\varepsilon(x)
     = \int_{x \in \varprojlim E^{<n_k}} y(x)\,d\varepsilon(x).
\end{align*}

(\ref{it:eps->state}) We claim that $A_\omega m \le e^\beta m$. We calculate
\[
A_\omega m
    = A_\omega\Big(\sum^\infty_{j=0} e^{-\beta j} A^j_\omega\Big)\varepsilon
    = e^\beta \Big(\sum^\infty_{j=1} e^{-\beta j} A^j_\omega\Big)\varepsilon
    \le e^\beta \Big(\sum^\infty_{j=0} e^{-\beta j} A^j_\omega\Big)\varepsilon
    = e^\beta m.
\]
Now Proposition~\ref{prp:constructKMS} gives a KMS$_\beta$ state $\phi_\varepsilon$ satisfying~\eqref{eq:phieps on spanner}.

(\ref{it:affineiso}) We claim that every KMS$_\beta$ state $\phi$ has the form
$\phi_\varepsilon$. Fix a KMS$_\beta$ state $\phi$, and let $m^\phi$ be the measure such
that $m^\phi(Z(\mu,k)) = \phi(\pi_{(\mu,k)})$. By part~(\ref{it:KMS subinvariance}),
$m^\phi$ is a subinvariant probability measure. Let $\varepsilon := (1 -
e^{-\beta}A_\omega)^{-1} m^\phi$. Then $m^\phi = (1 - e^{-\beta}A_\omega) \varepsilon$ by
construction, and comparing~\eqref{eq:phieps on spanner} with~\eqref{eq:KMS on spanner}
shows that $\phi = \phi_\varepsilon$.

The formula~\eqref{eq:phieps on spanner} also shows that the map $F : \varepsilon \to
\phi_\varepsilon$ is injective and weak$^*$-continuous from $\Omega_\beta$ to the state
space of $\Tt(E, \omega)$. We have just seen that it is surjective onto the KMS$_\beta$
simplex, which is compact since $C^*(E, \omega)$ is unital. Hence $F$ is a homeomorphism
of $\Omega_\beta$ onto the KMS$_\beta$ simplex. The formula~\eqref{eq:phim def} shows
that $F$ is affine, and the formula for the inverse follows from our proof of
surjectivity in the preceding paragraph.
\end{proof}

\begin{cor}\label{cor:normalisedaffine}
Let $E$ be a strongly connected finite directed graph with no sources, and take a
sequence $\omega = (n_k)^\infty_{k=1}$ of nonzero positive integers such that $n_k \mid
n_{k+1}$ for all $k$. Let $\alpha : \RR \to \Aut(\Tt(E, \omega))$ be given by $\alpha_t =
\gamma_{e^{it}}$. Take $\beta > \ln\rho(A_E)$. Let $y$ be as in part~(\ref{it:which
epsilons}) of Theorem~\ref{thm:affine iso}. The map $m \mapsto \phi_{y^{-1}m}$ is an
affine isomorphism of $\Mm^+_1(\varprojlim E^{<n_k})$ onto the KMS$_\beta$-simplex of
$(\Tt(E, \omega), \alpha)$.
\end{cor}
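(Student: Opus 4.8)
The plan is to realise the map $m\mapsto\phi_{y^{-1}m}$ as a composition: first an affine homeomorphism $\Mm^+_1(\varprojlim E^{<n_k})\to\Omega_\beta$ given by $m\mapsto y^{-1}m$, and then the affine isomorphism $\varepsilon\mapsto\phi_\varepsilon$ of Theorem~\ref{thm:affine iso}(\ref{it:affineiso}). So the real content is to check that multiplication by $y^{-1}$ carries $\Mm^+_1$ bijectively onto $\Omega_\beta$, affinely and weak$^*$-bicontinuously. First I would record the relevant properties of $y$: since $E$ is finite, $r(x_1)$ ranges over the finite set $E^0$, so $y(x)=\sum_{\mu\in E^*r(x)}e^{-\beta|\mu|}$ depends only on $r(x_1)$ and is therefore locally constant, hence continuous on $\varprojlim E^{<n_k}$. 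By Theorem~\ref{thm:affine iso}(\ref{it:which epsilons}) each of these (finitely many) series converges, and each is at least $1$ because the length-$0$ path $r(x)$ contributes $e^{0}=1$; so $1\le y(x)\le\max_{v\in E^0}\sum_{\mu\in E^*v}e^{-\beta|\mu|}<\infty$. In particular $y$ and its pointwise reciprocal $y^{-1}$ are strictly positive, bounded continuous functions on the compact space $\varprojlim E^{<n_k}$.

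Next I would define $G\colon\Mm^+_1(\varprojlim E^{<n_k})\to\Mm^+(\varprojlim E^{<n_k})$ by $G(m)(U)=\int_U y(x)^{-1}\,dm(x)$ and $H\colon\Omega_\beta\to\Mm^+(\varprojlim E^{<n_k})$ by $H(\varepsilon)(U)=\int_U y(x)\,d\varepsilon(x)$. Both are linear, hence affine; and since multiplication by a fixed element of $C(\varprojlim E^{<n_k})$ is weak$^*$-continuous on $\Mm(\varprojlim E^{<n_k})$, both are weak$^*$-continuous. For $m\in\Mm^+_1(\varprojlim E^{<n_k})$ we have $\int y\,dG(m)=\int y(x)\,y(x)^{-1}\,dm(x)=m(\varprojlim E^{<n_k})=1$, so $G(m)\in\Omega_\beta$; conversely for $\varepsilon\in\Omega_\beta$ we have $H(\varepsilon)(\varprojlim E^{<n_k})=\int y\,d\varepsilon=1$, so $H(\varepsilon)\in\Mm^+_1(\varprojlim E^{<n_k})$. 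Because $y$ and $y^{-1}$ are pointwise reciprocals of a strictly positive function, $H\circ G=\id$ and $G\circ H=\id$. Hence $G$ is an affine homeomorphism of $\Mm^+_1(\varprojlim E^{<n_k})$ onto $\Omega_\beta$, with inverse $H$.

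Finally, composing $G$ with the affine isomorphism $\varepsilon\mapsto\phi_\varepsilon$ of Theorem~\ref{thm:affine iso}(\ref{it:affineiso}) shows that $m\mapsto\phi_{G(m)}=\phi_{y^{-1}m}$ is an affine isomorphism, indeed an affine homeomorphism, of $\Mm^+_1(\varprojlim E^{<n_k})$ onto the KMS$_\beta$-simplex of $(\Tt(E,\omega),\alpha)$, which is the claim. The only point requiring any care is establishing that $y$ is bounded away from $0$ and $\infty$, so that $G$ and $H$ genuinely land in $\Mm^+$ and are mutually inverse; this is immediate from the finiteness of $E^0$ together with the hypothesis $\beta>\ln\rho(A_E)$ (which guarantees convergence of the defining series, by Theorem~\ref{thm:affine iso}(\ref{it:which epsilons})), so I do not anticipate a serious obstacle.
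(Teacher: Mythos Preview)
Your proof is correct and follows exactly the same strategy as the paper: factor $m\mapsto\phi_{y^{-1}m}$ as the composition of $m\mapsto y^{-1}m$ with the affine isomorphism $\varepsilon\mapsto\phi_\varepsilon$ of Theorem~\ref{thm:affine iso}(\ref{it:affineiso}), and observe that the first map is an affine isomorphism because $y$ is strictly positive and bounded. The paper's proof states this in one sentence, whereas you have spelled out the details (local constancy of $y$, the explicit inverse $H$, and the weak$^*$-continuity), but the underlying argument is the same.
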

\begin{proof}
Since $y$ takes strictly positive values and is bounded, the map $m \mapsto y^{-1}m$ is
an affine isomorphism of $\Mm^+_1(\varprojlim E^{<n_k})$ onto $\Omega_\beta$, so the
result follows from Theorem~\ref{thm:affine iso}(\ref{it:affineiso}).
\end{proof}

\subsection{KMS states at the critical temperature}
We show that the extreme KMS states at the critical temperature $\ln\rho(A_E)$ are
indexed by the equivalence classes $E^0/{\sim_{n_k}}$ of Lemma~\ref{lem:Cn} for any $k$
such that $\gcd(\Pp_E, n_k) = \gcd(\Pp_E, \omega)$.

\begin{thm}\label{thm:betterKMS}
Let $E$ be a strongly connected finite directed graph with no sources, and take a
sequence $\omega = (n_k)^\infty_{k=1}$ of nonzero positive integers such that $n_k \mid
n_{k+1}$ for all $k$. Fix $k$ such that $\gcd(\Pp_E, n_k) = \gcd(\Pp_E, \omega)$, and let
$\sim_{n_k}$ be the equivalence relation on $E^0$ of Lemma~\ref{lem:Cn}. Let $\alpha :
\RR \to \Aut(\Tt(E, \omega))$ be given by $\alpha_t = \gamma_{e^{it}}$. Let $x^E$ be the
unimodular Perron--Frobenius eigenvector of $A_E$.
\begin{enumerate}
\item\label{it:criticalKMS} For each $\Lambda \in E^0/{\sim_{n_k}}$, there is a
    KMS$_{\ln\rho(A_E)}$ state $\phi^\Lambda$ for $(\Tt(E, \omega), \alpha)$
    satisfying
    \begin{equation}\label{eq:criticalKMS on spanners}
        \phi^\Lambda(t_\mu \pi_{(\tau, k)} t^*_\nu) = \chi_\Lambda(s(\tau)) \delta_{\mu,\nu} \frac{1}{\sum_{v \in \Lambda} x^E_v} \rho(A_E)^{-|\mu|-|\tau|} x^E_{s(\tau)}.
    \end{equation}
    This is the unique KMS$_{\ln\rho(A_E)}$ state for $(\Tt(E, \omega), \alpha)$
    satisfying $\phi^\Lambda(\pi_{(v,k)}) = 0$ for all $v \in E^0 \setminus \Lambda$,
    and it factors through a KMS$_{\ln\rho(A_E)}$ state $\overline{\phi}^\Lambda$ of
    $(C^*(E, \omega), \alpha)$.
\item\label{it:uniqueKMS} The states $\overline{\phi}^\Lambda$ are the extreme points
    of the KMS$_{\ln\rho(A_E)}$-simplex of $(C^*(E, \omega), \alpha)$, and there are
    no KMS$_\beta$-states for $(C^*(E, \omega), \alpha)$ for any $\beta \not=
    \ln\rho(A_E)$.
\end{enumerate}
\end{thm}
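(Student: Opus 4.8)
The plan is to realise the critical-temperature KMS states as weak$^*$ limits of the supercritical states produced by Theorem~\ref{thm:affine iso}, and then to identify the entire KMS$_{\ln\rho(A_E)}$-simplex with the simplex of positive $A_\omega$-eigenmeasures described in Lemma~\ref{lem:uniquePF}. A limiting argument is forced here because the path-space construction of Proposition~\ref{prp:path rep} and Proposition~\ref{prp:constructKMS} degenerates at $\beta=\ln\rho(A_E)$: for an eigenmeasure $m$ with $A_\omega m=\rho(A_E)m$ the auxiliary measure $(1-e^{-\beta}A_\omega)m=(1-e^{\ln\rho(A_E)-\beta})m$ vanishes in the limit, so no state is produced directly.

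For part~(\ref{it:criticalKMS}), fix an $A_\omega$-invariant probability measure $m$ (so $A_\omega m=\rho(A_E)m$); the case of interest is $m=m^\Lambda$ from Lemma~\ref{lem:uniquePF}(\ref{it:mLambdas}). For each $\beta>\ln\rho(A_E)$ put $\varepsilon_\beta:=(1-e^{-\beta}A_\omega)m\in\Mm^+(\varprojlim E^{<n_k})$. Since $(1-e^{-\beta}A_\omega)^{-1}\varepsilon_\beta=m\in\Mm^+_1(\varprojlim E^{<n_k})$, Theorem~\ref{thm:affine iso}(\ref{it:which epsilons}) gives $\varepsilon_\beta\in\Omega_\beta$, so Theorem~\ref{thm:affine iso}(\ref{it:eps->state}) yields a KMS$_\beta$ state $\phi_{\varepsilon_\beta}$ with $\phi_{\varepsilon_\beta}(t_\mu\pi_{(\tau,k)}t^*_\nu)=\delta_{\mu,\nu}e^{-\beta|\mu|}m(Z(\tau,k))$. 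As $\beta\downarrow\ln\rho(A_E)$ these numbers converge to $\delta_{\mu,\nu}\rho(A_E)^{-|\mu|}m(Z(\tau,k))$; since the $\phi_{\varepsilon_\beta}$ have norm $1$ and the elements $t_\mu\pi_{(\tau,k)}t^*_\nu$ span a dense subspace of $\Tt(E,\omega)$, a routine approximation argument shows $\phi_{\varepsilon_\beta}$ converges weak$^*$ to a state $\phi^{(m)}$ with $\phi^{(m)}(t_\mu\pi_{(\tau,k)}t^*_\nu)=\delta_{\mu,\nu}\rho(A_E)^{-|\mu|}m(Z(\tau,k))$. Putting $\mu=\nu=r(\tau)$ gives $\phi^{(m)}(\pi_{(\tau,k)})=m(Z(\tau,k))$, so $\phi^{(m)}$ satisfies~\eqref{eq:KMS on spanner} at $\beta=\ln\rho(A_E)$ and hence is KMS$_{\ln\rho(A_E)}$ by Theorem~\ref{thm:KMSchar}(\ref{it:KMS cond}), with induced measure $m$; as $A_\omega m=\rho(A_E)m$, it factors through $C^*(E,\omega)$ by Theorem~\ref{thm:KMSchar}(\ref{it:KMS factor}). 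Taking $\phi^\Lambda:=\phi^{(m^\Lambda)}$ and inserting the values $m^\Lambda(Z(\tau,k))$ from Lemma~\ref{lem:uniquePF}(\ref{it:mLambdas}) and~\eqref{eq:PF measure} yields~\eqref{eq:criticalKMS on spanners}, and $\overline{\phi}^\Lambda$ denotes the induced state of $C^*(E,\omega)$; note $\phi^\Lambda(\pi_{(v,k)})=m^\Lambda(Z(v,k))=0$ for $v\in E^0\setminus\Lambda$ since then $Z(v,k)$ is disjoint from the support $X_\Lambda$ of $m^\Lambda$. For uniqueness, any KMS$_{\ln\rho(A_E)}$ state $\phi$ of $\Tt(E,\omega)$ has $A_\omega m^\phi\le\rho(A_E)m^\phi$ by Theorem~\ref{thm:KMSchar}(\ref{it:KMS subinvariance}), hence $A_\omega m^\phi=\rho(A_E)m^\phi$ by Lemma~\ref{lem:subinvariance}, hence $m^\phi=\sum_{\Lambda'}t_{\Lambda'}m^{\Lambda'}$ by Lemma~\ref{lem:uniquePF}(\ref{it:unique}); the hypothesis $\phi(\pi_{(v,k)})=0$ for $v\in E^0\setminus\Lambda$ forces $t_{\Lambda'}=0$ for $\Lambda'\neq\Lambda$, so $m^\phi=m^\Lambda$, and since~\eqref{eq:KMS on spanner} shows a KMS state is determined by its induced measure, $\phi=\phi^\Lambda$.

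For part~(\ref{it:uniqueKMS}), the subinvariance argument just used shows that \emph{every} KMS$_{\ln\rho(A_E)}$ state of $\Tt(E,\omega)$ satisfies $A_\omega m^\phi=\rho(A_E)m^\phi$ and so factors through $C^*(E,\omega)$ by Theorem~\ref{thm:KMSchar}(\ref{it:KMS factor}); thus the KMS$_{\ln\rho(A_E)}$-simplices of $\Tt(E,\omega)$ and $C^*(E,\omega)$ coincide. The map $\phi\mapsto m^\phi$ is affine, weak$^*$-continuous, and injective (by~\eqref{eq:KMS on spanner}); it takes values in $\{m\in\Mm^+_1(\varprojlim E^{<n_k}):A_\omega m=\rho(A_E)m\}$, which by Lemma~\ref{lem:uniquePF}(\ref{it:unique}) is the simplex with affinely independent, disjointly supported extreme points $\{m^\Lambda\}_{\Lambda\in E^0/{\sim_{n_k}}}$; and it is surjective, because $\phi^{(m)}$ from part~(\ref{it:criticalKMS}) realises each such $m$. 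Since $C^*(E,\omega)$ is unital, both sides are weak$^*$-compact, so $\phi\mapsto m^\phi$ is an affine homeomorphism and carries extreme points to extreme points; hence the extreme points of the KMS$_{\ln\rho(A_E)}$-simplex of $C^*(E,\omega)$ are precisely the $\overline{\phi}^\Lambda$. Finally, there are no KMS$_\beta$ states of $C^*(E,\omega)$ for $\beta\neq\ln\rho(A_E)$: if $\beta<\ln\rho(A_E)$ then already $\Tt(E,\omega)$ has no KMS$_\beta$ state, since Theorem~\ref{thm:KMSchar}(\ref{it:KMS subinvariance}) and Lemma~\ref{lem:subinvariance} would force $e^\beta\ge\rho(A_E)$; and if $\beta>\ln\rho(A_E)$, a KMS$_\beta$ state of $C^*(E,\omega)$ pulls back to one of $\Tt(E,\omega)$ that, factoring through $C^*(E,\omega)$, satisfies $A_\omega m^\phi=e^\beta m^\phi$ by Theorem~\ref{thm:KMSchar}(\ref{it:KMS factor}), exhibiting a positive $A_\omega$-eigenmeasure with eigenvalue $e^\beta\neq\rho(A_E)$, which is impossible by Lemma~\ref{lem:uniquePF}(\ref{it:unique}).

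The step I expect to be the main obstacle is the construction in part~(\ref{it:criticalKMS}): there is no convergent Neumann series for $(1-e^{-\beta}A_\omega)^{-1}$ at $\beta=\ln\rho(A_E)$, so one cannot build a path-space state directly; the crux is instead to notice that Theorem~\ref{thm:affine iso} already produces supercritical KMS states with arbitrary prescribed induced probability measure, and to let $\beta$ decrease to the critical value. Once the limit state is available, the remaining assertions reduce to bookkeeping with the measure-theoretic characterisation of Theorem~\ref{thm:KMSchar} and the Perron--Frobenius description of $A_\omega$ in Lemma~\ref{lem:uniquePF}; the only point needing a little care is that the net over $\beta$, rather than a subnet, converges on all of $\Tt(E,\omega)$, which follows from uniform boundedness and convergence on the spanning family.
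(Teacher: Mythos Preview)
Your argument is correct, and for part~(\ref{it:uniqueKMS}) it is essentially the paper's argument: both reduce to Theorem~\ref{thm:KMSchar}(\ref{it:KMS factor}) together with Lemma~\ref{lem:uniquePF}(\ref{it:unique}) and Lemma~\ref{lem:subinvariance}.

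For part~(\ref{it:criticalKMS}), however, you take a genuinely different route. The paper exploits the direct-limit description $\Tt(E,\omega)=\varinjlim \Tt(E,n_l)$: for each $l\ge k$ it invokes the KMS theory for the finite graph $E(n_l)$ from \cite{anHuefLacaEtAl:ETDS14} to obtain a unique KMS$_{\ln\rho(A_E)}$ state $\phi^\Lambda_l$ on $C^*(E,n_l)$ vanishing off the $\Lambda$-component, checks these are compatible under the connecting maps, and extends by continuity to $C^*(E,\omega)$. Your construction instead stays inside $\Tt(E,\omega)$ and produces $\phi^\Lambda$ as a weak$^*$ limit of the supercritical states $\phi_{\varepsilon_\beta}$ of Theorem~\ref{thm:affine iso} as $\beta\downarrow\ln\rho(A_E)$, with induced measure held fixed at $m^\Lambda$. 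This is self-contained (no appeal to \cite{anHuefLacaEtAl:ETDS14}) and makes transparent why the critical state inherits the measure $m^\Lambda$; the paper's approach, on the other hand, gives uniqueness more directly via the finite-graph results and yields the explicit formula~\eqref{eq:criticalKMS on spanners} by quoting known Perron--Frobenius formulas for graph-algebra KMS states. One small point: your sentence ``inserting the values $m^\Lambda(Z(\tau,k))$ \dots\ yields~\eqref{eq:criticalKMS on spanners}'' hides a normalisation computation---you need $m(X_\Lambda)=\frac{1}{n_k}\sum_{v\in\Lambda}x^E_v$, which is not immediate from~\eqref{eq:PF measure} alone---but since your state agrees with the paper's by the uniqueness clause, it must satisfy whatever explicit formula the unique state satisfies.
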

\begin{proof}
(\ref{it:criticalKMS}) Fix $\Lambda \in E^0/{\sim_{n_k}}$. We first prove the existence
of a KMS$_{\ln\rho(A_E)}$ state satisfying~\eqref{eq:criticalKMS on spanners}. For each
$l \ge k$, let $E(n_l)_\Lambda$ be the component of $E(n_l)$ with vertices
$E^{<n_l}\Lambda$. Theorem~4.3(a) of \cite{anHuefLacaEtAl:ETDS14} shows that there is a
unique KMS state $\phi^\Lambda_l$ of $C^*(E(n_l)) = C^*(E, n_l)$ that vanishes on
$\varepsilon_\mu$ for $\mu \in E^{<n_l}(E^0 \setminus \Lambda)$. Since each
$\phi^\Lambda_{l+1}$ must restrict to a KMS state of $C^*(E, n_l)$, the $\phi^\Lambda_l$
are compatible under the inclusions $C^*(E, n_l) \hookrightarrow C^*(E, n_{l+1})$. So
continuity yields a state $\phi^\Lambda$ on $C^*(E, \omega)$ that agrees with each
$\phi^\Lambda_{n_l}$ on the image of $C^*(E, n_l)$, and hence
satisfies~\eqref{eq:criticalKMS on spanners}. It follows that $\phi^\Lambda(\pi_{(v,k)})
= 0$ for all $v \in E^0 \setminus \Lambda$. Uniqueness follows from uniqueness of the
$\phi^\Lambda_l$. Theorem~\ref{thm:KMSchar}(\ref{it:KMS factor}) shows that
$\phi^\Lambda$ factors through $(C^*(E, \omega), \alpha)$.

(\ref{it:uniqueKMS}) The $\phi^\Lambda$ are linearly independent, and so
are the extreme points of the convex set they generate. So it suffices to show that every
KMS state of $C^*(E, \omega)$ is a convex combination of the $\phi^\Lambda$. Suppose that
$\psi$ is a KMS$_\beta$ state of $(C^*(E, \omega), \alpha)$. Let $q : \Tt(E, \omega) \to
C^*(E, \omega)$ be the quotient map. Theorem~\ref{thm:KMSchar}(\ref{it:KMS factor})
implies that $A_\omega m^{\psi\circ q} = e^\beta m^{\psi \circ q}$. Hence
Lemma~\ref{lem:uniquePF}(\ref{it:unique}) shows that $m^{\psi \circ q}$ is a convex
combination $m^{\psi \circ q} = \sum_\Lambda t_\Lambda m^\Lambda$ of the $m^\Lambda$. It
then follows from Theorem~\ref{thm:KMSchar}(\ref{it:KMS factor}) that $\psi \circ q =
\sum_\Lambda t_\Lambda \overline{\phi}^\Lambda$. Theorem~\ref{thm:KMSchar}(\ref{it:KMS
factor}) combined with Lemma~\ref{lem:uniquePF}(\ref{it:unique}) shows that there are no
KMS states for $C^*(E, \omega)$ at any other inverse temperature.
\end{proof}

\begin{proof}[Proof of Theorem~\ref{thm:mainKMS}]
Item~(\ref{it:main1}) follows from Corollary~\ref{cor:normalisedaffine} and
item~(\ref{it:main2}) follows from Theorem~\ref{thm:betterKMS}.

For item~(\ref{it:main4}), recall that Theorem~\ref{thm:KMSchar}(\ref{it:KMS factor})
implies that a KMS$_\beta$ state $\phi$ factors through $C^*(E, \omega)$ if and only if
$A_\omega m^\phi = e^{-\beta} m^\phi$. If $\phi$ factors through $C^*(E, \omega)$, then
$m^\phi$ is a positive eigenmeasure for $A_\omega$ and Lemma~\ref{lem:uniquePF} gives
$\beta = \ln\rho(A_E)$. On the other hand, if $\beta = \ln\rho(A_E)$, then
Theorem~\ref{thm:KMSchar}(\ref{it:KMS subinvariance}) gives $A_\omega m^\phi \le
\rho(A_E) m^\phi$, and then Lemma~\ref{lem:subinvariance} forces equality.

Finally, for~(\ref{it:main3}), suppose that $\phi$ is a KMS$_\beta$ state of $(\Tt(E,
\omega), \alpha)$. Then Theorem~\ref{thm:KMSchar}(\ref{it:KMS subinvariance}) implies
that $A_\omega m^\phi \le e^{\beta} m^\phi$, and then Lemma~\ref{lem:subinvariance} gives
$e^\beta \ge \rho(A_E)$ and hence $\beta \ge \ln\rho(A_E)$.
\end{proof}

We deduce that simplicity of $C^*(E, \omega)$ is reflected by the existence of a unique
KMS state for the gauge action.

\begin{prp}\label{prp:simple<->KMS}
Let $E$ be a strongly connected finite directed graph with no sources, and take a
sequence $\omega = (n_k)^\infty_{k=1}$ of nonzero positive integers such that $n_k \mid
n_{k+1}$ for all $k$ and $n_k \to \infty$. Let
$\alpha : \RR \to \Aut(\Tt(E, \omega))$ be given by $\alpha_t = \gamma_{e^{it}}$. The
following are equivalent
\begin{enumerate}
\item\label{it:gcd1} $\gcd(\Pp_E, \omega) = 1$;
\item\label{it:simple} $C^*(E,\omega)$ is simple;
\item\label{it:phifactors} there is a unique KMS state for $(C^*(E,\omega), \alpha)$ and
the state ~\eqref{eq:criticalKMS on spanners} factors through this state; and
\item\label{it:factorstate} the state~\eqref{eq:criticalKMS on spanners} is a factor
    state.
\end{enumerate}
\end{prp}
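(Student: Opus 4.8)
The equivalence of~(\ref{it:gcd1}) and~(\ref{it:simple}) is exactly Corollary~\ref{cor:simple}, so the plan is to close the circuit by establishing (\ref{it:gcd1})$\,\Rightarrow\,$(\ref{it:phifactors})$\,\Rightarrow\,$(\ref{it:factorstate})$\,\Rightarrow\,$(\ref{it:gcd1}). Throughout, put $d := \gcd(\Pp_E,\omega)$, fix $k$ with $\gcd(\Pp_E,n_k) = d$ as in Lemma~\ref{lem:direct sum}, and recall the resulting decomposition $C^*(E,\omega) = \bigoplus_{\Lambda \in E^0/{\sim_{n_k}}} P_{k,\Lambda}\,C^*(E,\omega)\,P_{k,\Lambda}$ into nonzero central direct summands. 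Write $\phi$ for the KMS$_{\ln\rho(A_E)}$ state of $C^*(E,\omega)$ determined by~\eqref{eq:criticalKMS on spanners}; by Theorem~\ref{thm:KMSchar} its induced measure $m^\phi$ is the Perron--Frobenius eigenmeasure $m$ of Proposition~\ref{prp:Aomega eigenmeasure}, which by~\eqref{eq:PF measure} is strictly positive on every cylinder set, so that $\phi(P_{k,\Lambda}) = m(X_\Lambda) > 0$ for \emph{every} $\Lambda$, where $X_\Lambda$ is the clopen set of Lemma~\ref{lem:invariantsubset}. Finally note that any edge $e$ of $E$ gives $C_{n_k}(r(e),s(e)) = 1 + d\ZZ$ in Lemma~\ref{lem:Cn}, so $E^0/{\sim_{n_k}}$ is a singleton when $d = 1$ and has at least two elements when $d \ge 2$.

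For (\ref{it:gcd1})$\,\Rightarrow\,$(\ref{it:phifactors}): if $d = 1$ then $E^0/{\sim_{n_k}} = \{E^0\}$, so by Theorem~\ref{thm:betterKMS}(\ref{it:uniqueKMS}) the state $\overline{\phi}^{E^0}$ is the only extreme point of the KMS$_{\ln\rho(A_E)}$-simplex of $(C^*(E,\omega),\alpha)$ and there are no KMS states at any other inverse temperature; hence $\overline{\phi}^{E^0}$ is the unique KMS state, and $\phi$ coincides with it since both induce the eigenmeasure $m$ (here $X_{E^0} = \varprojlim E^{<n_k}$, and $\phi^{E^0}$ factors through $\overline\phi^{E^0}$ by Theorem~\ref{thm:betterKMS}(\ref{it:criticalKMS})). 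This is~(\ref{it:phifactors}). For (\ref{it:phifactors})$\,\Rightarrow\,$(\ref{it:factorstate}): under~(\ref{it:phifactors}) the state $\phi$ is the unique KMS state of $(C^*(E,\omega),\alpha)$, in particular the unique $(\alpha,\ln\rho(A_E))$-KMS state, and so trivially an extreme point of that one-point simplex; an extremal KMS state is a factor state (\cite[Theorem~5.3.30]{BratteliRobinson:OAQSMvII} when $\ln\rho(A_E)\neq 0$, and the analogous statement for extremal traces when $\ln\rho(A_E)=0$), so $\phi$ is a factor state.

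The implication (\ref{it:factorstate})$\,\Rightarrow\,$(\ref{it:gcd1}) is the crux, and I would argue the contrapositive. Suppose $d \ge 2$. Choose any $\Lambda_0 \in E^0/{\sim_{n_k}}$ and put $z := P_{k,\Lambda_0}$, a central projection of $C^*(E,\omega)$ with $\phi(z) = m(X_{\Lambda_0})$; by the first paragraph $\phi(z) > 0$, and since $\sum_{\Lambda} m(X_\Lambda) = 1$ has at least two strictly positive summands also $\phi(z) < 1$, so $\phi(1-z) > 0$. In the GNS triple $(\Hh_\phi, \pi_\phi, \xi_\phi)$ the projection $\pi_\phi(z)$ lies in $\pi_\phi(C^*(E,\omega))''$ and, being the image of a central element, commutes with all of $\pi_\phi(C^*(E,\omega))$, hence lies in the centre of $\pi_\phi(C^*(E,\omega))''$; and $\langle\pi_\phi(z)\xi_\phi,\xi_\phi\rangle = \phi(z)$ and $\langle\pi_\phi(1-z)\xi_\phi,\xi_\phi\rangle = \phi(1-z)$ are both nonzero, so neither $\pi_\phi(z)$ nor $1-\pi_\phi(z)$ vanishes. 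Thus $\pi_\phi(C^*(E,\omega))''$ has nontrivial centre, i.e.\ $\phi$ is not a factor state, contradicting~(\ref{it:factorstate}); hence $d = 1$.

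The genuinely load-bearing points, as opposed to bookkeeping, are the identification (via Theorem~\ref{thm:KMSchar} and Proposition~\ref{prp:Aomega eigenmeasure}) of the state~\eqref{eq:criticalKMS on spanners} with the Perron--Frobenius-eigenmeasure state --- it is the strict positivity $\phi(P_{k,\Lambda}) > 0$ for every $\Lambda$ that powers (\ref{it:factorstate})$\,\Rightarrow\,$(\ref{it:gcd1}) --- together with the appeal to \cite{BratteliRobinson:OAQSMvII} for ``extremal KMS implies factor'' in (\ref{it:phifactors})$\,\Rightarrow\,$(\ref{it:factorstate}); the direct-sum decomposition of Lemma~\ref{lem:direct sum} and the count of classes of $\sim_{n_k}$ supply the rest, so I do not expect any serious technical obstacle beyond assembling these ingredients correctly.
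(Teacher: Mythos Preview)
Your argument is correct and follows essentially the same route as the paper: Corollary~\ref{cor:simple} for (\ref{it:gcd1})$\Leftrightarrow$(\ref{it:simple}), Theorem~\ref{thm:betterKMS} for (\ref{it:gcd1})$\Rightarrow$(\ref{it:phifactors}), \cite[Theorem~5.3.30]{BratteliRobinson:OAQSMvII} for (\ref{it:phifactors})$\Rightarrow$(\ref{it:factorstate}), and the central projections from Lemma~\ref{lem:direct sum} together with strict positivity of the Perron--Frobenius measure for (\ref{it:factorstate})$\Rightarrow$(\ref{it:gcd1}). The only cosmetic differences are that the paper argues (\ref{it:factorstate})$\Rightarrow$(\ref{it:gcd1}) directly rather than by contrapositive, and works with the projections $Q_{k,\Lambda}$ in $\Tt(E,\omega)$ rather than their images $P_{k,\Lambda}$ in $C^*(E,\omega)$; since the state factors through the quotient these are equivalent.
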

\begin{proof}
Corollary~\ref{cor:simple} gives \mbox{(\ref{it:gcd1})${}\iff{}$(\ref{it:simple})}, and
Theorem~\ref{thm:betterKMS} gives
\mbox{(\ref{it:gcd1})${}\implies{}$(\ref{it:phifactors})}. To establish
\mbox{(\ref{it:phifactors})${}\implies{}$(\ref{it:factorstate})}, suppose that $\phi$
factors through the unique KMS state of $(C^*(E,\omega), \alpha)$. Then it is an extreme
point of the KMS simplex and hence a factor state by
\cite[Theorem~5.3.30(3)]{BratteliRobinson:OAQSMvII}.

For \mbox{(\ref{it:factorstate})${}\implies{}$(\ref{it:gcd1})} let $\phi$ be the state
given by~\eqref{eq:criticalKMS on spanners} and suppose that $\phi$ is a factor state for
$\Tt C^*(E,\omega)$. Fix $k$ such that $\gcd(\Pp_E,n_k) = \gcd(\Pp,\omega)$. Recall
the equivalence relation $\sim_{n_k}$ of Lemma~\ref{lem:Cn} and the projections
$Q_{k,\Lambda}$ of Lemma~\ref{lem:direct sum}. We have $\phi(\pi_{(\mu,k)}) =
\frac{1}{n_k}\rho(A_E)^{-|\mu|}
x^E_{s(\mu)} \not= 0$ for all $\mu$ because the Perron--Frobenius eigenvector has
strictly positive entries. So each $\phi(Q_{k, \Lambda}) \not= 0$. So the GNS
representation $\pi_\phi$ is also nonzero on the $Q_{k,\Lambda}$. Lemma~\ref{lem:direct
sum} implies that the $Q_{k,\Lambda}$ are central in $\Tt(E, \omega)$, and so the
$\pi_\phi(Q_{k,\Lambda})$ are mutually orthogonal central projections in
$\pi_\phi(\Tt(E,\omega))''$. Since $\phi$ is a factor state, it follows that there is
only one equivalence class $\Lambda$ for $\sim_{n_k}$, and so $\gcd(\Pp_E, \omega) = 1$.
\end{proof}

\end{document}